\documentclass[a4paper,11pt,twoside]{amsart}

\textwidth=15.6cm \textheight=21.5cm \flushbottom
\usepackage[left=2.7cm,right=2.7cm,top=3.5cm,bottom=3cm]{geometry}

\newfont{\cyr}{wncyr10 scaled 1100}

\usepackage{amsmath,amssymb,amsthm,amscd,amsfonts}
\usepackage{mathrsfs}
\usepackage{latexsym}
\usepackage{graphicx}
\usepackage[all]{xy}

\newtheorem{thm}{Theorem}[section]
\newtheorem{pro}[thm]{Proposition}
\newtheorem{cor}[thm]{Corollary}
\newtheorem{lem}[thm]{Lemma}

\theoremstyle{remark}
\newtheorem{rem}{Remark}[section]

\theoremstyle{remark}
\newtheorem{rems}[rem]{Remarks}

\theoremstyle{remark}
\newtheorem*{rem-no-num}{Remark}

\theoremstyle{remark}
\newtheorem*{notat}{Notation}

\theoremstyle{remark}

\theoremstyle{remark}

\theoremstyle{definition}
\newtheorem{dfn}{Definition}[section]

\theoremstyle{definition}

\newcommand{\Q}{\mbox{$\mathbb Q$}}

\newcommand{\C}{\mbox{$\mathbb C$}}

\newcommand{\Z}{\mbox{$\mathbb Z$}}

\newcommand{\F}{\mbox{$\mathbb F$}}
\newcommand{\n}{\mbox{$\mathfrak n$}}
\newcommand{\m}{\mbox{$\mathfrak m$}}
\newcommand{\E}{\mbox{$\mathscr E$}}
\newcommand{\cO}{\mbox{$\mathcal O$}}
\newcommand{\cC}{\mbox{$\mathcal C$}}
\newcommand{\gal}[2]{\mbox{$\mathrm{Gal}(#1/#2)$}}
\newcommand{\divexact}{\mbox{$|\!|$}}

\newcommand{\sha}[1]{\mbox{{\cyr{X}}$(#1)$}}
\newcommand{\sel}[2]{\mbox{${\mathrm{Sel}}_{#1}(#2)$}}
\newcommand{\dsel}[2]{\mbox{${{\mathrm{Sel}}_{#1}(#2)}^\ast$}}
\newcommand{\ddsel}[2]{\mbox{${{\mathrm{Sel}}_{#1}(#2)}^{\ast\ast}$}}
\newcommand{\longmono}{\mbox{$\lhook\joinrel\longrightarrow$}}

\newcommand{\longepi}{\mbox{$\relbar\joinrel\twoheadrightarrow$}}
\newcommand{\frob}{\mbox{${\mathrm{Frob}}$}}
\newcommand{\tr}[2]{\mbox{${\mathrm{Tr}}_{#1/#2}$}}

\begin{document}

\title[On ring class eigenspaces of Mordell-Weil groups]{On ring class eigenspaces of Mordell-Weil groups of elliptic curves over global function fields}
\author{Stefano Vigni}
\address{Dipartimento di Matematica, Universit\`a di Milano, Via C. Saldini 50, 20133 Milano, Italy} 
\email{stevigni@mat.unimi.it}
\subjclass[2000]{11G05, 14G10}
\keywords{elliptic curves, function fields, Drinfeld-Heegner points}

\begin{abstract}
If $E$ is a non-isotrivial elliptic curve over a global function field $F$ of odd characteristic we show that certain Mordell-Weil groups of $E$ have $1$-dimensional $\chi$-eigenspace (with $\chi$ a complex ring class character) provided that the projection onto this eigenspace of a suitable Drinfeld-Heegner point is nonzero. This represents the analogue in the function field setting of a theorem for elliptic curves over $\Q$ due to Bertolini and Darmon, and at the same time is a generalization of the main result proved by Brown in his monograph on Heegner modules. As in the number field case, our proof employs Kolyvagin-type arguments, and the cohomological machinery is started up by the control on the Galois structure of the torsion of $E$ provided by classical results of Igusa in positive characteristic. 
\end{abstract}

\maketitle

\section{Introduction}

Let $\cC_{/\mathbb F_q}$ be a geometrically connected, smooth, projective algebraic curve over a finite field of characteristic $p>2$, and denote $F:=\F_q(\cC)$ and $\cO_{\mathcal C}$ the function field and the structure sheaf of $\cC$, respectively. Let $E_{/F}$ be a non-isotrivial elliptic curve (i.e., $j(E)\notin\bar{\F}_q$) defined over $F$. At the cost of replacing $F$ by a finite, separable extension $F'$, we can assume that there is a closed point $\infty$ of $\cC$ at which $E$ has split multiplicative reduction. Fix such a point, put $d_\infty:=\text{deg}(\infty)$ and denote $A:=\cO_{\mathcal C}(\cC-\{\infty\})=H^0(\cC-\{\infty\},\cO_{\mathcal C})$ the Dedekind domain of the elements of $F$ that are regular outside $\infty$. Then the conductor $\m$ of $E$ can be written as $\m=\n\infty$ with $\n$ an ideal of $A$; we can view $\m$ as an effective divisor on $\cC$, and compute it via Tate's algorithm. Let $F_\infty$ be the completion of $F$ at $\infty$, and let $\mathbf C_\infty$ be the completion of an algebraic closure $\bar{F}_\infty$ of $F_\infty$.

Choose an imaginary quadratic extension $K$ of $F$ (i.e., a quadratic extension of $F$ where $\infty$ is inert) satisfying the so-called ``Heegner hypothesis'': all primes dividing $\n$ split in $K$. Under this assumption, if $\cO_K$ is the integral closure of $A$ in $K$ then we can find an ideal $\mathcal N$ of $\cO_K$ such that $\cO_K/\mathcal N\cong A/\n$.  Now fix an ideal $\mathfrak c$ of $A$ with $(\mathfrak c,\n)=1$. We want to recall how to build a certain Drinfeld-Heegner point on $E$ which depends on $\mathfrak c$. To do this, take the order $\cO_{\mathfrak c}:=A+\mathfrak c\cO_K$ in $\cO_K$ of \mbox{conductor $\mathfrak c$}. Since $(\mathfrak c,\n)=1$, the ideal $\mathcal N_\mathfrak c:=\mathcal N\cap\cO_{\mathfrak c}$ is a proper ideal of $\cO_{\mathfrak c}$ with $\cO_\mathfrak c/\mathcal N_\mathfrak c\cong\cO_K/\mathcal N$. It follows that the rank two $A$-lattices $\cO_{\mathfrak c}$ and $\mathcal N^{-1}_{\mathfrak c}$ in $\mathbf C_\infty$ determine a pair $(\Phi_\mathfrak c,\Phi'_\mathfrak c)$ of Drinfeld modules of rank two with a cyclic $\n$-isogeny, hence a point $x_{\mathfrak c}$ on the (compactified) Drinfeld modular curve $X_0(\n)$ of level $\n$. Moreover, as $\text{End}(\Phi)\cong\text{End}(\Phi')\cong\cO_\mathfrak c$, complex multiplication for Drinfeld modules ensures that $x_{\mathfrak c}$ is rational over the ring class field $H_{\mathfrak c}$ of $K$ of conductor $\mathfrak c$. As described in \cite[\S 8]{h}, this field is an abelian extension of $K$ which is unramified outside the primes dividing $\mathfrak c$; furthermore, class field theory provides a canonical isomorphism $\gal{H_{\mathfrak c}}{K}\cong\text{Pic}(\cO_{\mathfrak c})$ (\cite[Theorem 8.8]{h}). Finally, as thoroughly explained in \cite{gr}, there is a non-constant morphism
\[ \pi_E: X_0(\n) \longrightarrow E \]
defined over $F$, i.e. a finite surjective morphism of $F$-schemes. In other words, the elliptic curve $E$ is known to be ``modular'', a property that in the number field case has been established in full generality only for elliptic curves over $\Q$, thanks to the work of Wiles and his school on the Conjecture of Shimura, Taniyama and Weil. We fix such a modular parametrization once and for all, and set $\alpha_{\mathfrak c}:=\pi_E(x_{\mathfrak c})\in E(H_{\mathfrak c})$. The point $\alpha_\mathfrak c$ is the Drinfeld-Heegner point we alluded to before. For more details on these results, which the initiated reader will easily recognize as the function field counterpart of the now-classical constructions of Heegner points in the number field setting as described, e.g., in \cite{g2}, we refer to \cite[\S 2]{bro1}, \cite[Ch. 4]{bro2}.

Now set $G:=\gal{H_\mathfrak c}{K}$, and consider the dual group $\hat{G}:=\text{Hom}(G,\C^\times)$ of complex characters of $G$. Then $G$ acts naturally on the complexified Mordell-Weil group $E(H_\mathfrak c)\otimes_{\mathbb Z} \C$, and we have a decomposition
\[ E(H_\mathfrak c)\otimes\C = \bigoplus_{\chi\in\hat{G}} E(H_\mathfrak c)^\chi \]
as a direct sum of eigenspaces, where
\[ E(H_\mathfrak c)^\chi := \bigl\{x\in E(H_\mathfrak c)\otimes\C \mid \text{$x^\sigma=\chi(\sigma)x$ for all $\sigma\in G$}\bigr\}. \]
Note that $E(K)\otimes\C\subset E(H_\mathfrak c)^{\boldsymbol 1}$ if $\boldsymbol 1$ is the trivial character of $G$. Finally, let
\begin{equation} \label{idempotent-eq}
e_\chi := \frac{1}{|G|}\sum_{\sigma\in G}\chi^{-1}(\sigma)\sigma 
\end{equation}
be the idempotent in the group ring $\Q(\boldsymbol \mu_{|G|})[G]$ giving the projection onto the $\chi$-eigenspace. Set $\alpha_{\mathfrak c,\chi}:=e_\chi(\alpha_\mathfrak c)\in E(H_\mathfrak c)^\chi$. The main result of this note is the following
\begin{thm}  \label{main-thm}
If $\alpha_{\mathfrak c,\chi}\not=0$ then $\dim_{\mathbb C}E(H_\mathfrak c)^\chi=1$.
\end{thm}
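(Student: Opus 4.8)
The plan is to transplant Kolyvagin's Euler-system descent to positive characteristic: the Drinfeld--Heegner points $\{\alpha_{\mathfrak c\mathfrak n}\}$, with $\mathfrak n$ ranging over squarefree products of suitable primes of $A$, play the role of the Euler system, and Igusa's results on the Galois action on the torsion of a non-isotrivial elliptic curve supply the large-image input that powers the Chebotarev arguments; this is the function field counterpart of the method of Bertolini and Darmon. First I would reduce to a Selmer-group bound. Write $\mathcal O_\chi:=\Z[\chi]\subset\C$ for the order generated by the values of $\chi$ (a subring of the ring of integers of $\Q(\boldsymbol\mu_{|G|})$) and let $M_\chi$ be the $\chi$-isotypic part of $E(H_\mathfrak c)\otimes\mathcal O_\chi$, so that $\mathrm{rank}_{\mathcal O_\chi}M_\chi=\dim_\C E(H_\mathfrak c)^\chi$ and the image $\bar\alpha$ of $\alpha_\mathfrak c$ in $M_\chi$ is non-torsion precisely because $\alpha_{\mathfrak c,\chi}\neq 0$. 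Hence $\dim_\C E(H_\mathfrak c)^\chi\geq 1$ is automatic, and by the Kummer sequence it is enough to produce a single prime number $\ell$ for which the $\chi$-component of $\sel{\ell^\infty}{E/H_\mathfrak c}$ has $\mathcal O_\chi\otimes\Z_\ell$-corank at most $1$.

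To fix $\ell$, I would use Igusa's theorem: since $E$ is non-isotrivial, the mod-$\ell$ representation $\bar\rho_{E,\ell}\colon\gal{F^{\mathrm{sep}}}{F}\to\mathrm{GL}_2(\mathbb F_\ell)$ is surjective for all but finitely many $\ell$. Choose such an $\ell$, moreover distinct from $p$, coprime to $6\,|G|$, and large enough that $\bar\alpha\notin\ell M_\chi$. Because $\mathrm{SL}_2(\mathbb F_\ell)$ has no nontrivial abelian quotient, the image of $\gal{F^{\mathrm{sep}}}{H_\mathfrak c}$ on $E[\ell]$ still contains $\mathrm{SL}_2(\mathbb F_\ell)$; in particular $E(H_\mathfrak c)[\ell]=0$, so the inflation--restriction maps on $H^1(\,\cdot\,,E[\ell^M])$ invoked below are injective. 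The Euler system itself comes from Brown's monograph: for a prime $\mathfrak q$ of $A$ inert in $K$ and prime to $\mathfrak c\n\ell$ there are points $\alpha_{\mathfrak c\mathfrak q}\in E(H_{\mathfrak c\mathfrak q})$, hence $\alpha_{\mathfrak c\mathfrak n}\in E(H_{\mathfrak c\mathfrak n})$ for squarefree $\mathfrak n$, subject to the Eichler--Shimura norm relations $\tr{H_{\mathfrak c\mathfrak q}}{H_\mathfrak c}(\alpha_{\mathfrak c\mathfrak q})=a_\mathfrak q\,\alpha_\mathfrak c$ and a congruence $\alpha_{\mathfrak c\mathfrak q}\equiv\frob_\mathfrak q(\alpha_\mathfrak c)$ modulo a prime of $H_{\mathfrak c\mathfrak q}$ above $\mathfrak q$, together with a prescribed behaviour under the nontrivial element $\tau$ of $\gal{K}{F}$; since $\gal{H_{\mathfrak c\mathfrak n}}{F}$ is generalized dihedral, $\tau$ conjugates $\chi$ to $\bar\chi$, which matters at the last step.

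I would then form Kolyvagin's derivative classes and descend. A prime $\lambda$ of $A$ is a Kolyvagin prime (at level $\ell^M$) if it is inert in $K$, prime to $\mathfrak c\n\ell$, and $\frob_\lambda$ acts on $E[\ell^M]$ like a complex conjugation, i.e.\ $\ell^M\mid N(\lambda)+1$ and $\ell^M\mid a_\lambda$; then $G_\lambda:=\gal{H_{\mathfrak c\lambda}}{H_\mathfrak c}$ is cyclic of order divisible by $\ell^M$. Taking a generator $\sigma_\lambda$, Kolyvagin's operator $D_\lambda:=\sum_{i=1}^{|G_\lambda|-1}i\,\sigma_\lambda^i$ and $D_\mathfrak n:=\prod_{\lambda\mid\mathfrak n}D_\lambda$, the identity $(\sigma_\lambda-1)D_\lambda=|G_\lambda|-\sum_{g\in G_\lambda}g$ and the norm relations together show that $D_\mathfrak n\alpha_{\mathfrak c\mathfrak n}$ is fixed modulo $\ell^M$ by $\gal{H_{\mathfrak c\mathfrak n}}{H_\mathfrak c}$; via the Kummer map and the injectivity above one obtains classes $c(\mathfrak n)\in H^1(H_\mathfrak c,E[\ell^M])$, with $c(1)$ the image of $\alpha_\mathfrak c$, unramified outside $\mathfrak n$ and whose singular component at $\lambda\mid\mathfrak n$ corresponds, under the finite--singular comparison isomorphism, to the reduction modulo $\ell^M$ of the local point $\alpha_{\mathfrak c(\mathfrak n/\lambda)}$. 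Projecting onto $\chi$-parts, $c_\chi(\mathfrak n):=e_\chi c(\mathfrak n)$ (legitimate since $\ell\nmid|G|$), I would argue by contradiction: were the $\chi$-component of $\sel{\ell^M}{E/H_\mathfrak c}$ of $\mathcal O_\chi/\ell^M$-corank $\geq 2$, the large Galois image and the dihedral action of $\tau$ would let the Chebotarev density theorem supply Kolyvagin primes $\lambda$ whose localization maps detect prescribed elements of this corank-$\geq 2$ space; feeding a suitable product $\mathfrak n$ of such primes into the classes $c_\chi(\mathfrak n)$ and invoking global reciprocity---Poitou--Tate duality, which holds over $F$ exactly as over a number field---would force $\mathrm{loc}_\lambda c_\chi(\mathfrak n/\lambda)=0$ at a prime where it was arranged to be nonzero, a contradiction. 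Therefore the $\chi$-component of $\sel{\ell^\infty}{E/H_\mathfrak c}$ has corank $\leq 1$, so $\dim_\C E(H_\mathfrak c)^\chi\leq 1$, and combined with the automatic lower bound this gives $\dim_\C E(H_\mathfrak c)^\chi=1$.

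The hard part, I expect, will not be the formal Euler-system machinery---which transplants smoothly once one has Brown's norm relations and Poitou--Tate duality over $F$---but two intertwined pieces of bookkeeping. First, one must extract from Igusa's theorem a Galois image on $E[\ell]$ that is genuinely large over $H_\mathfrak c$, for infinitely many $\ell\neq p$ coprime to $|G|$, so that both the Chebotarev selection of Kolyvagin primes and the non-vanishing of $\alpha_{\mathfrak c,\chi}$ modulo $\ell$ are guaranteed. Second, and more delicate, one must carry the entire derivative-class construction, the finite--singular comparison and the final reciprocity computation through the idempotent $e_\chi$ for a ring class character $\chi$ of \emph{arbitrary} order, keeping track throughout of how $\tau$ interchanges the $\chi$- and $\bar\chi$-isotypic pieces. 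It is this $\chi$-isotypic control, rather than any single hard estimate, that is the crux of the matter.
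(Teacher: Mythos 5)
Your overall strategy is the correct one and matches the paper: reduce to a mod-$\ell$ Selmer bound for a carefully chosen auxiliary prime $\ell$, build Kolyvagin derivative classes from the Drinfeld--Heegner Euler system, use local and global Poitou--Tate duality to compute the $\chi$-component of the dual Selmer group, and keep track of the $\chi\leftrightarrow\bar\chi$ exchange induced by the generalized dihedral structure of $\gal{H_\mathfrak c}{F}$. But there is a genuine error at the point you yourself flag as delicate, and it is precisely the point where the function field case departs from Bertolini--Darmon. You assert that Igusa's theorem gives surjectivity of $\bar\rho_{E,\ell}\colon G_F\to GL_2(\F_\ell)$ for almost all $\ell$. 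This is false: Igusa's theorem only gives that the image is $\Gamma_\ell$, the subgroup defined by $0\to SL_2(\Z/\ell\Z)\to\Gamma_\ell\xrightarrow{\det}\langle q\rangle\to 0$; the determinant lands in the cyclic subgroup of $(\Z/\ell\Z)^\times$ generated by $q$, not in all of $(\Z/\ell\Z)^\times$. In particular the existence in the image of an element acting as $\bigl(\begin{smallmatrix}1&0\\0&-1\end{smallmatrix}\bigr)$ --- the ``complex conjugation'' you need in order to define Kolyvagin primes and to run the Chebotarev selection of test primes --- is not automatic. It holds if and only if $-1\in\langle q\rangle\subset(\Z/\ell\Z)^\times$, and ensuring this while simultaneously requiring $\ell\equiv 1\pmod{|G|}$ is a nontrivial compatibility problem: the paper handles it (Proposition 3.2) by a dedicated Chebotarev argument over the fields $\Q(\boldsymbol\mu_{2^sN})$ and $\Q(\boldsymbol\mu_{2^s},q^{1/2^s})$, whose degrees must be balanced against one another. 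Without that step your construction of the involutions $\tau_\star$ and hence the Kolyvagin primes has no foundation.

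Two smaller remarks. First, the claim that $E(H_\mathfrak c)[\ell]=0$ because ``$SL_2(\F_\ell)$ has no nontrivial abelian quotient'' is not quite the right reduction: $\gal{H_\mathfrak c}{F}$ is dihedral, not abelian, and one must rule out that $F(E[\ell])$ meets $H_\mathfrak c$ nontrivially; the paper does this by showing that the only solvable subextensions of $F(E[\ell])/F$ come from roots of unity, and that these change the constant field (Proposition 2.4), then uses that the $q$-homothety lies in $\Gamma_\ell$ and $\ell\nmid q-1$ to kill the torsion (Corollary 2.6). Second, the paper works strictly mod $\ell$ (not mod $\ell^M$) and, rather than arguing by contradiction against corank $\geq 2$, gives a direct computation: the images $X_z^{\bar\chi}$ of the local duality maps generate the dual Selmer group (Proposition 5.5), each is one-dimensional (Proposition 5.8), and they are all equal (Proposition 5.9), whence the Selmer $\chi$-eigenspace is exactly one-dimensional. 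That constructive form yields, essentially for free, the vanishing of $\sha{E/H}^\chi[\ell^\infty]$ and hence (via Kato--Trihan) the BSD conjecture for $E_{/K}$, which the contradiction argument would not deliver directly.
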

This is the function field analogue of a result proved by Bertolini and Darmon (\cite{bd}) for rational elliptic curves (without complex multiplication). The theorem of Bertolini and Darmon was inspired by a conjecture of Gross on the relation between Heegner points and the dimension of eigenspaces of certain Mordell-Weil groups of modular abelian varieties (cf. \cite[Conjecture 11.2]{g1}), and Theorem \ref{main-thm} may certainly be viewed in a broader automorphic-theoretic context, although this issue is not pursued in this paper. Let now $\alpha_K:=\tr{H_{\mathfrak c}}{K}(\alpha_{\mathfrak c})\in E(K)$; by specializing Theorem \ref{main-thm} to the trivial character $\chi=\boldsymbol 1$ we deduce
\begin{cor} \label{main-cor}
If $\alpha_K$ is not a torsion point then $E(K)$ has rank one.
\end{cor}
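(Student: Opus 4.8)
The plan is to run Kolyvagin's Euler-system descent, in the eigenspace-refined form due to Bertolini and Darmon \cite{bd}, with Igusa's theorems on the Galois action on the torsion of a non-isotrivial $E$ playing in characteristic $p$ the role that Serre's open image theorem plays over $\Q$. Since the hypothesis $\alpha_{\mathfrak c,\chi}\neq0$ already gives $\dim_{\mathbb C}E(H_{\mathfrak c})^\chi\geq1$, it suffices to prove the reverse inequality, and I would obtain it by bounding a suitable isotypic component of a mod-$\ell$ Selmer group of $E$ over $H_{\mathfrak c}$ for a carefully chosen rational prime $\ell\neq p$.

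\emph{Choice of $\ell$.} Because $E$ is non-isotrivial, Igusa's theorem guarantees that for all but finitely many $\ell\neq p$ the image of $\gal{\bar F}{K}$ in $\mathrm{Aut}(E[\ell])\cong\mathrm{GL}_2(\mathbb F_\ell)$ contains $\mathrm{SL}_2(\mathbb F_\ell)$. Among these I would take an $\ell$ that is also prime to $|G|$, to $\#E(H_{\mathfrak c})_{\mathrm{tors}}$ and to the ramification and bad-reduction data entering the Euler system, and --- the decisive requirement --- such that the Drinfeld-Heegner point is $\ell$-primitive in the $\chi$-direction: since $|G|$ is a unit at $\ell$, the idempotent $e_\chi$ reduces modulo a prime $\mathfrak l$ of $\Z[\boldsymbol\mu_{|G|}]$ above $\ell$, and the hypothesis $e_\chi(\alpha_{\mathfrak c})\neq0$ in $E(H_{\mathfrak c})\otimes\C$ forces this reduction to be nonzero in the $\bar\chi$-isotypic part of $E(H_{\mathfrak c})/\ell E(H_{\mathfrak c})$ for all but finitely many $\mathfrak l$ --- a nonzero vector in a finitely generated module over the Dedekind ring $\Z[\boldsymbol\mu_{|G|},1/|G|]$ stays nonzero modulo almost every prime. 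Fix such $\ell$ and $\mathfrak l$, and write $\bar\kappa$ for the resulting nonzero $\bar\chi$-component of the Kummer image of $\alpha_{\mathfrak c}$ in $H^1(H_{\mathfrak c},E[\ell])$.

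\emph{Kolyvagin classes and the Selmer bound.} Next I would build the derivative classes. For primes $\mathfrak q$ of $A$ that are inert in $K$, prime to $\ell\n\mathfrak c$, and admissible for $\ell$ in the sense that $\ell$ divides both $\#(A/\mathfrak q)+1$ and $a_{\mathfrak q}(E)$, so that $E[\ell]\subseteq E(K_{\mathfrak q})$, the Drinfeld-Heegner points $\alpha_{\mathfrak c\mathfrak q}\in E(H_{\mathfrak c\mathfrak q})$ satisfy the Euler-system norm relations; following the Heegner-module formalism of Brown \cite{bro2}, applying Kolyvagin's derivative operators to these points and projecting onto the $\bar\chi$-component produces classes $\kappa(\mathfrak q)$ in the $\bar\chi$-part of $H^1(H_{\mathfrak c},E[\ell])$ that are unramified away from $\mathfrak q$ and the bad set and whose localization at $\mathfrak q$ is linked, through the finite/singular comparison isomorphism for $H^1(K_{\mathfrak q},E[\ell])$, to the localization at $\mathfrak q$ of $\bar\kappa$. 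Were the $\bar\chi$-part of the mod-$\ell$ Selmer group $\mathrm{Sel}_\ell(E/H_{\mathfrak c})$ strictly larger than the line spanned by $\bar\kappa$, I would pick $s$ in it outside that line and exploit the largeness of the mod-$\ell$ image of Galois --- together with $\ell\nmid|G|$, which pins down the intersection $K(E[\ell])\cap H_{\mathfrak c}$ --- to run a Chebotarev argument over the global function field $K$ producing an admissible $\mathfrak q$ at which $\mathrm{loc}_{\mathfrak q}(\bar\kappa)\neq0$ while $\mathrm{loc}_{\mathfrak q}(s)$ is suitably independent of it. Feeding $s$ and $\kappa(\mathfrak q)$ into the global reciprocity law for the Tate cup product --- the sum of local invariants over all places vanishes, and every term other than the one at $\mathfrak q$ dies by the Selmer and unramifiedness conditions --- and combining the Kolyvagin relation between $\mathrm{loc}_{\mathfrak q}(\kappa(\mathfrak q))$ and $\mathrm{loc}_{\mathfrak q}(\bar\kappa)$ with the non-degeneracy of the local Tate pairing then yields a contradiction. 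Hence the $\bar\chi$-part of $\mathrm{Sel}_\ell(E/H_{\mathfrak c})$ is exactly the line spanned by $\bar\kappa$; the Kummer descent sequence and the choice of $\ell$ prime to $\#E(H_{\mathfrak c})_{\mathrm{tors}}$ then force $\dim_{\mathbb C}E(H_{\mathfrak c})^\chi\leq1$, whence equality and Theorem \ref{main-thm}.

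The hard part is the Chebotarev step: it relies on Igusa's theorem to know the mod-$\ell$ image is large for a cofinal set of primes $\ell$ in positive characteristic, it requires controlling the intersection of $K(E[\ell])$ with the ring class field $H_{\mathfrak c}$ over $K$, and it uses the Chebotarev density theorem over the global function field $K$ to produce primes with prescribed Frobenius --- where the exclusion $\mathfrak q\nmid p$ is essential, since $E[\ell]$ defines an unramified (\'etale) sheaf only away from $p$. The second delicate ingredient, and the one place where the hypothesis $\alpha_{\mathfrak c,\chi}\neq0$ is genuinely used, is securing the $\ell$-primitivity of the Heegner point in the $\chi$-direction for infinitely many $\ell$. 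Finally, Corollary \ref{main-cor} follows immediately from Theorem \ref{main-thm}: for the trivial character one has $\alpha_{\mathfrak c,\boldsymbol 1}=e_{\boldsymbol 1}(\alpha_{\mathfrak c})=\frac{1}{|G|}\tr{H_{\mathfrak c}}{K}(\alpha_{\mathfrak c})=\frac{1}{|G|}\alpha_K$, so the assumption that $\alpha_K$ is non-torsion is exactly the condition $\alpha_{\mathfrak c,\boldsymbol 1}\neq0$; the theorem then gives $\dim_{\mathbb C}E(H_{\mathfrak c})^{\boldsymbol 1}=1$, and since $E(K)\otimes\C\subseteq E(H_{\mathfrak c})^{\boldsymbol 1}$ contains the non-torsion point $\alpha_K$ we conclude that $E(K)$ has rank one.
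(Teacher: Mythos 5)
Your deduction of the corollary is exactly the paper's (spelled out again as part 1 of Theorem \ref{main-K-thm}): take $\chi=\boldsymbol 1$ in Theorem \ref{main-thm}, note that $\alpha_{\mathfrak c,\boldsymbol 1}=\alpha_K/|G|$ is nonzero in $E(H_{\mathfrak c})\otimes\C$ precisely when $\alpha_K$ is non-torsion, conclude $\dim_{\mathbb C}E(H_{\mathfrak c})^{\boldsymbol 1}=1$, and combine this with $E(K)\otimes\C\subseteq E(H_{\mathfrak c})^{\boldsymbol 1}$ and the non-torsion of $\alpha_K$ to get $\mathrm{rank}_{\mathbb Z}E(K)=1$. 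Your surrounding sketch of the proof of Theorem \ref{main-thm} also tracks the paper's Kolyvagin-descent strategy closely; the one small expository divergence is that you invoke Brown's Heegner-module formalism for the derivative classes, whereas the paper's stated aim is to avoid that machinery and construct the Kolyvagin classes directly, though the classes themselves and the local/global duality arguments are the same.
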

Corollary \ref{main-cor} is (a weak version of) the main result of \cite{bro2} (cf. \cite[Theorem 1.13.1]{bro2}), and is the function field counterpart of the first major achievement of Kolyvagin's theory of Euler systems of Heegner points on rational elliptic curves (see, e.g., \cite[Theorem 1.3]{g2}). Actually, albeit not evident from Theorem \ref{main-thm}, in the course of our arguments we recover the results of Brown in full strength\footnote{The only possible exception being the explicit determination, in terms of the index of $\alpha_K$ in $E(K)$, of an annihilator of the $\ell$-primary subgroup of the Shafarevich-Tate group of $E_{/K}$.}, in particular (thanks to work by Kato and Trihan) the validity of the Conjecture of Birch and Swinnerton-Dyer for $E_{/F}$ and $E_{/K}$; the reader is referred to Section \ref{trivial-sec} below for precise statements and details. 

Finally we remark that, from our point of view, the proof of Theorem \ref{main-thm} is interesting because it shows that, once one has a good control on the Galois structure of the torsion points of elliptic curves, results of this kind can be obtained in the function field setting too by genuine Kolyvagin-type arguments, without resorting to the highbrow formalism of ``Heegner sheaves'' and ``Heegner modules'' recently introduced by Brown in his monograph \cite{bro2}.\\

\noindent\emph{Notation and conventions.} In the following, let $F^s\subset\bar{F}$ be a separable and an algebraic closure of $F$, respectively, viewed as subfields of $\mathbf C_\infty$ via a fixed embedding. The symbol $\boldsymbol \mu_r$ denotes the $r$th roots of unity in $\bar{F}$. 

For each effective divisor $\mathfrak d$ on the affine curve $\cC-\{\infty\}$ let $H_\mathfrak d$ be the ring class field of $K$ of conductor $\mathfrak d$. For simplicity, set $H:=H_\mathfrak c$ and $\alpha:=\alpha_\mathfrak c$. If $w$ is a place of $H$ then $\F_w$ is the residue field of the completion $H_w$. 

Throughout our work we will not distinguish between effective divisors on $\cC-\{\infty\}$ and ideals of $A$; thus, when dealing with divisors, we will often pass from additive to multiplicative notation (and \emph{vice versa}) according to convenience, without explicit warning. We write $\kappa(z)$ for the (finite) residue field at a closed point $z$ of $\cC$.

If $M$ is an abelian group (e.g., a Galois cohomology group, the geometric points of $E$, etc.) and $t\geq1$ is an integer then $M[t]$ denotes the $t$-torsion subgroup of $M$. In particular, for each integer $m\geq1$ not divisible by $p$ we write $E[m]$ for the $m$-torsion subgroup of $E(\bar{F})$, so that we have a non-canonical isomorphism $E[m]\cong(\Z/m\Z)^2$. 

Finally, we always (often tacitly) assume that $\F_q$ is algebraically closed in $F$ and in the quadratic extension $K$ of $F$, which amounts to asking that $\F_q$ be the field of constants of both $F$ and $K$. This condition is introduced in order to simplify the exposition: dropping it would add annoying technicalities to the proofs, while bringing at the same time no significant novelty to the main arguments.\\

\noindent\emph{Acknowledgements.} I would like to thank Ignazio Longhi for useful discussions and Douglas Ulmer for bringing the paper \cite{lrs} to my attention. 

\section{Galois action on the torsion of $E$} 

We collect here, sometimes without proofs, some results of Igusa (cf. \cite{i}) and other related facts on the Galois module structure of the torsion points of $E$. In our context, these are the analogue in characteristic $p$ of the celebrated ``open image'' theorem of Serre (\cite{s}) for elliptic curves without complex multiplication over number fields. We adopt the notation of \cite[\S 3]{bro1}.

\subsection{Igusa's theorem} \label{igusa-subsec}

Let $n$ be an integer prime to $p=\text{char}(F)$, let $E[n]$ be as above and let $E[p']$ be the prime-to-$p$ part of the torsion subgroup of $E$. There is a Galois representation
\[ \rho_{E,n}:G_F:=\gal{F^s}{F}\longrightarrow\text{Aut}(E[n])\cong GL_2(\Z/n\Z) \]
given by the natural action of $G_F$ on the $n$-torsion points of $E$. Composing $\rho_{E,n}$ with the determinant
\[ \text{det}:\text{Aut}(E[n])\longrightarrow(\Z/n\Z)^\times \]
induces a homomorphism $G_F\rightarrow(\Z/n\Z)^\times$. Set $H_n:=\langle q\rangle\subset(\Z/n\Z)^\times$ for the cyclic subgroup generated by $q$, so that we have an identification $H_n=\gal{\mathbb F_q(\boldsymbol \mu_n)}{\mathbb F_q}$. As in \cite[\S 3]{bro1}, we define the subgroup $\Gamma_n$ of $GL_2(\Z/n\Z)$ via the short exact sequence
\begin{equation} \label{gamma-n-eq}
0\longrightarrow SL_2(\Z/n\Z)\longrightarrow\Gamma_n \xrightarrow{\text{det}}H_n\longrightarrow0. 
\end{equation}
Passing to the inverse limit over all integers $n$ not divisible by $p$ we get an exact sequence of profinite groups
\[ 0\longrightarrow SL_2(\hat{\Z}_{p'})\longrightarrow\hat{\Gamma}\longrightarrow\hat{H}\longrightarrow0. \]
Here $\hat{\Z}_{p'}:=\prod_{\ell\not=p}\Z_\ell$ is the prime-to-$p$ profinite completion of $\Z$, the group $\hat{\Gamma}$ is closed in $GL_2(\hat{\Z}_{p'})$, and $\hat{H}$ is the subgroup of $\hat{\Z}^\times_{p'}$ which is topologically generated by $q$. If we perform the same procedure restricting instead to the powers of a prime $\ell\not=p$, the sequence \eqref{gamma-n-eq} yields a sequence
\[ 0\longrightarrow SL_2(\Z_\ell)\longrightarrow\hat{\Gamma}_\ell\longrightarrow\hat{H}_\ell\longrightarrow0. \]
Since we are assuming that $E_{/F}$ is not isotrivial, we can state 
\begin{thm}[Igusa] \label{igusa-thm}
The profinite group $\gal{F(E[p'])}{F}$ is an open subgroup of $\hat{\Gamma}$.
\end{thm}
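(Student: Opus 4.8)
The plan is to prove Igusa's theorem by following the strategy that works for the open image over number fields, adapted to the positive-characteristic setting, where the constant field forces the determinant to land in $\hat H$ rather than all of $\hat\Z_{p'}^\times$. First I would record the easy inclusion: since $E$ is defined over $F$ and $\F_q$ is the field of constants, the Weil pairing identifies $\det\circ\rho_{E,n}$ with the cyclotomic character $G_F\to(\Z/n\Z)^\times$, whose image is exactly $H_n=\gal{\F_q(\boldsymbol\mu_n)}{\F_q}=\langle q\rangle$ (this uses that $\F_q$ is algebraically closed in $F$); hence the image of $\rho_{E,n}$ is contained in $\Gamma_n$, and passing to the limit the image of $G_F$ in $GL_2(\hat\Z_{p'})$ lies in $\hat\Gamma$. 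So the content is the reverse inclusion up to finite index, i.e. that the image is \emph{open} in $\hat\Gamma$.

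The core is to show the image is large. I would argue prime by prime and then assemble. Fix $\ell\neq p$ and consider the $\ell$-adic representation $\rho_{E,\ell^\infty}:G_F\to\hat\Gamma_\ell\subset GL_2(\Z_\ell)$. Because $E$ has split multiplicative reduction at $\infty$ (arranged in the introduction), the image of the decomposition group at $\infty$ contains, via the theory of the Tate uniformization, a unipotent element $\begin{pmatrix}1&*\\0&1\end{pmatrix}$ of infinite order (the $\ell$-adic valuation of $j(E)$ at $\infty$ is negative since $E$ is non-isotrivial, so the Tate parameter is a nontrivial power), which gives a full line of unipotents in the image of $G_F$ inside $SL_2(\Z_\ell)$. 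Non-isotriviality is what guarantees this is genuinely infinite rather than torsion. Then one invokes the standard group-theoretic input: a closed subgroup of $SL_2(\Z_\ell)$ that surjects onto $SL_2(\F_\ell)$ (or even just contains a non-trivial unipotent together with enough semisimple elements) and whose image is not contained in a Borel is open in $SL_2(\Z_\ell)$ — the key point being to rule out the image lying in a Borel, a normalizer of a torus, or an exceptional subgroup. Here non-isotriviality again enters: if the mod-$\ell$ image were contained in a Borel for all $\ell$ one could build a nontrivial isogeny factor or a rational torsion structure forcing $j(E)$ to be constant (an $F$-rational $\ell$-isogeny for infinitely many $\ell$ is impossible for a non-isotrivial curve by the function-field analogue of the isogeny/height bounds, or by a direct Tate-uniformization argument at $\infty$). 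This yields that $\rho_{E,\ell^\infty}(G_F)$ is open in $\hat\Gamma_\ell$ for every $\ell$, and is all of $\hat\Gamma_\ell$ for all but finitely many $\ell$.

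Finally I would combine the primes. By a Goursat/commutator argument — using that $SL_2(\F_\ell)$ is perfect and simple for $\ell\geq 5$ and that these groups have no common composition factors — the product of the mod-$\ell$ images over the (cofinitely many) good $\ell$ is the full product $\prod SL_2(\F_\ell)$, and together with the finitely many bad primes and the determinant constraint pinning the whole image into $\hat\Gamma$, one concludes that $\rho_{E,p'}(G_F)=\gal{F(E[p'])}{F}$ is open in $\hat\Gamma$. I expect the main obstacle to be the ``non-Borel'' step: cleanly establishing, using only non-isotriviality (plus the split multiplicative place $\infty$), that the mod-$\ell$ image is not reducible for almost all $\ell$ and not contained in a torus-normalizer or exceptional subgroup. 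In characteristic $p$ this is exactly where one cites Igusa's original analysis rather than re-deriving it; the Tate-curve unipotent at $\infty$ does most of the work, but excluding the dihedral/CM-type possibility requires knowing $E$ has no complex multiplication over $\bar F$, which is again a consequence of non-isotriviality for elliptic curves over function fields.
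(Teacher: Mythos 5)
The paper does not actually prove Theorem \ref{igusa-thm}: it is attributed to Igusa, with \cite{bro1} (in particular Theorem~3.1 there) cited for a sketch and the forthcoming \cite{blv} cited for an alternative proof via Tate's analytic uniformization. So there is no internal proof to compare against; your proposal is a reconstruction along the Tate-uniformization route. The easy inclusion --- that the image lands inside $\hat{\Gamma}$, via the Weil pairing together with $\F_q$ being algebraically closed in $F$ --- is handled correctly.

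The substance, openness, is where the proposal leaves the decisive step as a black box. The unipotent supplied by the Tate parametrization at $\infty$ does give an order-$\ell$ element in the mod-$\ell$ image for almost all $\ell$ (those not causing trouble with $v_\infty(q_E)$), but by itself it cannot exclude the mod-$\ell$ image lying in a Borel. Your suggested remedy --- that an $F$-stable line in $E[\ell]$ for infinitely many $\ell$ would force $j(E)\in\bar{\F}_q$ --- is precisely the content of Igusa's irreducibility analysis and cannot be invoked as folklore inside a proof of Igusa's theorem; it has to be either executed (for instance by bounding the $F$-rational cyclic isogenies of a non-isotrivial curve, or by a more careful study of the decomposition group at $\infty$, which is what \cite{blv} does) or explicitly cited, which is what the present paper itself does. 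The group theory should also be stated precisely: for $\ell\geq 5$ a closed subgroup of $SL_2(\Z_\ell)$ surjecting onto $SL_2(\F_\ell)$ equals $SL_2(\Z_\ell)$, and by Dickson's classification a subgroup of $GL_2(\F_\ell)$ containing a nontrivial unipotent and not contained in a Borel contains $SL_2(\F_\ell)$; ``a unipotent together with enough semisimple elements'' is not the correct hypothesis. The Goursat patching over the primes and the determinant bookkeeping are fine in outline. As it stands the sketch is in the right spirit but would need the irreducibility (non-Borel) step supplied before it counts as a proof; otherwise it is circular at exactly the point that makes the theorem nontrivial.
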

If $E[\ell^\infty]$ is the $\ell$-primary part of the torsion of $E$ (for $\ell$ a prime number), Theorem \ref{igusa-thm} can be equivalently formulated as
\begin{thm} \label{igusa2-thm}
The profinite group $\gal{F(E[\ell^\infty])}{F}$ is an open subgroup of $\hat{\Gamma}_\ell$ for all prime numbers $\ell\not=p$, and is equal to $\hat{\Gamma}_\ell$ for almost all such $\ell$'s.
\end{thm}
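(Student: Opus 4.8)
The plan is to descend from the prime-to-$p$ picture of Theorem~\ref{igusa-thm} to each $\ell$-adic factor. Write $G:=\gal{F(E[p'])}{F}$ and regard it, via the inverse limit of the representations $\rho_{E,n}$, as a closed subgroup of $\hat{\Gamma}\subset GL_2(\hat{\Z}_{p'})$; by Igusa's theorem $G$ is open, i.e.\ it is closed of finite index $N:=[\hat{\Gamma}:G]$. Since $F(E[\ell^\infty])\subseteq F(E[p'])$ for every prime $\ell\neq p$, the group $\gal{F(E[\ell^\infty])}{F}$ is the quotient of $G$ through which $\rho_{E,\ell^\infty}$ factors; concretely, if $\pi_\ell\colon\hat{\Gamma}\to\hat{\Gamma}_\ell$ denotes the continuous homomorphism induced by the ring projection $\hat{\Z}_{p'}=\prod_{\ell'\neq p}\Z_{\ell'}\to\Z_\ell$, then $\pi_\ell(G)=\gal{F(E[\ell^\infty])}{F}$ as a subgroup of $\hat{\Gamma}_\ell$.

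First I would check that $\pi_\ell$ is surjective. On $SL_2$-parts it is the coordinate projection $SL_2(\hat{\Z}_{p'})=\prod_{\ell'\neq p}SL_2(\Z_{\ell'})\to SL_2(\Z_\ell)$, which is onto; on the quotients in the exact sequences defining $\hat{\Gamma}$ and $\hat{\Gamma}_\ell$ it is the projection $\hat{H}\to\hat{H}_\ell$, which is onto because $\hat{H}$ (resp.\ $\hat{H}_\ell$) is the closure of the subgroup generated by $q$ in $\hat{\Z}_{p'}^\times$ (resp.\ in $\Z_\ell^\times$) and the continuous image of a compact group is closed. A short diagram chase then yields surjectivity of $\pi_\ell$. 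Consequently $\gal{F(E[\ell^\infty])}{F}=\pi_\ell(G)$ is the image of a compact finite-index subgroup under a surjection of profinite groups, hence is closed in $\hat{\Gamma}_\ell$ of index at most $N$; in particular it is open, which is the first assertion of the theorem.

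For the ``almost all'' clause I would argue as follows. From $[\hat{\Gamma}:G]=N<\infty$ it follows that $U:=G\cap SL_2(\hat{\Z}_{p'})$ is open in $SL_2(\hat{\Z}_{p'})=\prod_{\ell\neq p}SL_2(\Z_\ell)$, and an open subgroup of an infinite product of profinite groups contains all but finitely many of the factors; thus there is a finite set $S$ of primes, which we may take to contain $p$, with $\prod_{\ell\notin S}SL_2(\Z_\ell)\subseteq U\subseteq G$. Hence $\gal{F(E[\ell^\infty])}{F}=\pi_\ell(G)\supseteq\pi_\ell\bigl(SL_2(\Z_\ell)\bigr)=SL_2(\Z_\ell)$ for every $\ell\notin S$. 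On the other hand, by the Weil pairing $\det\circ\rho_{E,\ell^\infty}$ is the $\ell$-adic cyclotomic character, whose image is $\gal{F(\boldsymbol\mu_{\ell^\infty})}{F}$; since $\F_q$ is the field of constants of $F$ one has $F(\boldsymbol\mu_{\ell^\infty})=F\cdot\F_q(\boldsymbol\mu_{\ell^\infty})$ and hence $\gal{F(\boldsymbol\mu_{\ell^\infty})}{F}\cong\gal{\F_q(\boldsymbol\mu_{\ell^\infty})}{\F_q}=\hat{H}_\ell$. Therefore, for every $\ell\neq p$ the determinant maps $\gal{F(E[\ell^\infty])}{F}$ onto $\hat{H}_\ell$ (so in particular $\gal{F(E[\ell^\infty])}{F}\subseteq\hat{\Gamma}_\ell$ always), and for $\ell\notin S$ it contains in addition $SL_2(\Z_\ell)$; the exact sequence $0\to SL_2(\Z_\ell)\to\hat{\Gamma}_\ell\to\hat{H}_\ell\to0$ then forces $\gal{F(E[\ell^\infty])}{F}=\hat{\Gamma}_\ell$ for all $\ell\notin S$.

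I expect the only genuinely delicate point to be the passage from ``$G$ open in $\hat{\Gamma}$'' to ``$G$ contains the whole factor $SL_2(\Z_\ell)$ for all but finitely many $\ell$'': this uses both the product decomposition $SL_2(\hat{\Z}_{p'})=\prod_{\ell\neq p}SL_2(\Z_\ell)$ and the elementary topological fact about open subgroups of infinite products just mentioned. The remaining steps are routine bookkeeping with the short exact sequences \eqref{gamma-n-eq} and their inverse limits, together with the standard identification of $\det\rho_{E,\ell^\infty}$ with the cyclotomic character and the constant-field hypothesis on $F$.
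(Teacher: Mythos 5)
Your argument is correct, and it is the natural derivation of Theorem~\ref{igusa2-thm} from Theorem~\ref{igusa-thm}: the paper itself supplies no proof here (it simply labels the statement ``equivalent'' and refers the reader to \cite[\S 3]{bro1} and to \cite{blv}), so there is nothing in the text to compare against directly, but your route is the one the framing invites. The key steps all check out: $\pi_\ell(G)$ is a compact (hence closed) finite-index, hence open, subgroup of $\hat\Gamma_\ell$; openness of $G\cap SL_2(\hat\Z_{p'})$ in the product $\prod_{\ell\neq p}SL_2(\Z_\ell)$ does give a finite exceptional set $S$ outside of which the full $SL_2(\Z_\ell)$ is contained in $G$; and, for every $\ell\neq p$, the Weil pairing together with the standing hypothesis that $\F_q$ is algebraically closed in $F$ identifies $\det(\rho_{E,\ell^\infty})$ with the full group $\hat H_\ell$, so that containing $\ker(\det)=SL_2(\Z_\ell)$ forces equality with $\hat\Gamma_\ell$. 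One small remark: you prove the implication $\ref{igusa-thm}\Rightarrow\ref{igusa2-thm}$, which is exactly what the theorem statement requires; the converse implication, which would justify the paper's word ``equivalently,'' is not addressed, but it is also not what was asked for.
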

See \cite[\S 3]{bro1} for a sketch of proof of this result (cf., in particular, \cite[Theorem 3.1]{bro1}). An alternative proof via Tate's theory of analytic uniformization (together with arithmetic applications) can be found in the forthcoming article \cite{blv}.
\begin{rem}
In their paper \cite{ch}, Cojocaru and Hall give a uniform version of Igusa's theorem. More precisely, they show that there exists a positive constant $c(F)$, depending at most on the genus of $\cC$, such that $\gal{F(E[\ell])}{F}\cong\Gamma_\ell$ for any non-isotrivial elliptic curve $E_{/F}$ and any prime number $\ell\geq c(F)$, $\ell\not=p$. Moreover, they determine an explicit expression for $c(F)$: see \cite[Theorem 1]{ch}.
\end{rem} 

\subsection{Torsion points over ring class fields}

Theorem \ref{igusa-thm} will be used in our work in the guise of the following proposition and the subsequent corollary.
\begin{pro} \label{lin-disj-pro}
For almost all prime numbers $\ell\not=p$:
\begin{itemize}
\item[(a)] $\gal{F(E[\ell])}{F}\cong\Gamma_\ell$;
\item[(b)] if $\mathfrak d$ is an effective divisor on $\cC-\{\infty\}$ then 
\begin{equation} \label{lin-disj-F-eq}
H_\mathfrak d\cap F(E[\ell])=F, 
\end{equation}
hence $H_\mathfrak d$ and $F(E[\ell])$ are linearly disjoint over $F$. 
\end{itemize}
\end{pro}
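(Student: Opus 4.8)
The plan is to deduce both statements from Igusa's theorem (Theorem~\ref{igusa2-thm}) together with the description of $H_\mathfrak d/K$ as an abelian extension unramified outside the primes dividing $\mathfrak d$, plus the fact that $K/F$ is itself ramified at $\infty$ while $F(E[\ell])/F$ is essentially unramified outside $\infty$ and the primes of bad reduction.

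First, part~(a): Theorem~\ref{igusa2-thm} already gives $\gal{F(E[\ell^\infty])}{F}=\hat\Gamma_\ell$ for almost all $\ell\neq p$, and reducing mod $\ell$ the surjection $\hat\Gamma_\ell\twoheadrightarrow\Gamma_\ell$ shows $\gal{F(E[\ell])}{F}=\Gamma_\ell$ for almost all such $\ell$ (alternatively one may simply cite the Cojocaru--Hall uniform bound from the Remark). So (a) is immediate.

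Next, for part~(b), the key point is that the only primes of $F$ that ramify in $F(E[\ell])$ are $\infty$, the primes dividing the conductor $\m=\n\infty$ (i.e.\ the primes of bad reduction), and $\ell$ itself; more precisely, by the Néron--Ogg--Shafarevich criterion $E[\ell]$ is unramified at every prime of good reduction away from $\ell$, and since at $\infty$ the curve has split multiplicative reduction the inertia acts unipotently, so $F(E[\ell])/F$ is at worst tamely ramified there with ramification related to $\ell$. Meanwhile $H_\mathfrak d/K$ is unramified outside the primes dividing $\mathfrak d$, all of which lie over primes of $A$ (so away from $\infty$), and $K/F$ is ramified at $\infty$ (indeed $\infty$ is inert in $K/F$, hence unramified there, but $K/F$ has some ramification elsewhere by Riemann--Hurwitz since $\mathbb F_q$ is the constant field of both). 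The clean argument is: $M:=H_\mathfrak d\cap F(E[\ell])$ is an abelian extension of $F$ (being contained in the abelian-over-$K$ field $H_\mathfrak d$ and Galois over $F$ — one should check $H_\mathfrak d/F$ is Galois, which holds since $H_\mathfrak d/K$ is Galois and $K/F$ is Galois with the conjugation action preserving $\cO_\mathfrak d$), unramified outside the finite set $S$ of primes dividing $\mathfrak d$, and also a subextension of $F(E[\ell])$. Because $\gal{F(E[\ell])}{F}=\Gamma_\ell$ has $SL_2(\Z/\ell\Z)$ as commutator subgroup with abelian quotient $H_\ell=\gal{\mathbb F_q(\boldsymbol\mu_\ell)}{\mathbb F_q}$, the maximal abelian subextension of $F(E[\ell])/F$ is exactly $F\cdot\mathbb F_q(\boldsymbol\mu_\ell)=F(\boldsymbol\mu_\ell)$, the constant field extension. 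Thus $M\subseteq F(\boldsymbol\mu_\ell)$, which is everywhere unramified over $F$ (a constant field extension). On the other hand $M\subseteq H_\mathfrak d$ forces, by a standard class field theory / genus argument, that $M=F$: the field $F(\boldsymbol\mu_\ell)\cap H_\mathfrak d$ is an unramified-everywhere constant field extension of $F$ inside $H_\mathfrak d$, but $H_\mathfrak d$ contains the constant field extension of $F$ only up to the degree compatible with $\gal{H_\mathfrak d}{K}\cong\mathrm{Pic}(\cO_\mathfrak d)$ and with $\mathbb F_q$ being the constant field of $K$; since $K$ has $\mathbb F_q$ as its full constant field and $H_\mathfrak d/K$ is ramified only above $\mathfrak d$, any constant field extension of $F$ inside $H_\mathfrak d$ must already be $\mathbb F_q$, i.e.\ trivial. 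Hence $M=F$, giving \eqref{lin-disj-F-eq}, and linear disjointness follows because $F(E[\ell])/F$ is Galois.

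I expect the main obstacle to be the last step: pinning down that the only obstruction to $H_\mathfrak d\cap F(E[\ell])=F$ is a constant-field extension, and then ruling that out. The cleanest route is to combine (i) the ramification constraints above to see the intersection is an \emph{unramified abelian} extension of $F$ contained in a constant field extension, with (ii) the hypothesis that $\mathbb F_q$ is the field of constants of $K$ (hence of $F$), which kills nontrivial constant field extensions inside $H_\mathfrak d$. One subtlety to handle carefully is whether $F(E[\ell])/F$ could be ramified at $\infty$ in a way that contributes to an abelian subextension; the split multiplicative reduction at $\infty$ means the image of inertia at $\infty$ is (tame) unipotent, landing in $SL_2$, so it does not affect the abelianized quotient $H_\ell$ — this needs to be stated but is routine given Tate uniformization. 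A further routine check is that $H_\mathfrak d/F$ is indeed Galois so that the intersection is Galois over $F$; this follows from the functoriality of the ring class field construction under $\gal KF$.
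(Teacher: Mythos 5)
Your part~(a) matches the paper's proof, and your overall strategy for part~(b) — showing $M:=H_\mathfrak d\cap F(E[\ell])$ is forced into the cyclotomic/constant-field subextension $F(\boldsymbol\mu_\ell)$ of $F(E[\ell])$ and then ruling out a nontrivial intersection with $H_\mathfrak d$ — is close in spirit to the paper's, which uses Brown's result that solvable subextensions of $F(E[\ell])/F$ come from $\ell$-power roots of unity. However, the final step of your argument contains a genuine error.

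You assert that ``any constant field extension of $F$ inside $H_\mathfrak d$ must already be $\mathbb F_q$, i.e.\ trivial.'' This is false in general: by \cite[Theorem~8.10]{h}, which the paper invokes at exactly this point, the field of constants of the ring class field $H_\mathfrak d$ is $\F_{q^{d_\infty}}$, where $d_\infty=\deg\infty$. The extension $\F_{q^{d_\infty}}\cdot F/F$ is a nontrivial constant field extension sitting inside $H_\mathfrak d$ whenever $d_\infty>1$, and it is unramified everywhere, so no ramification argument will see it. Your reasoning that $H_\mathfrak d/K$ is ``unramified only above $\mathfrak d$'' cannot exclude it — a constant field extension is always unramified. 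Concretely, if $\ell\mid(q^{d_\infty}-1)$ and $\ell\nmid(q-1)$, then $F(\boldsymbol\mu_\ell)\subset H_\mathfrak d$ and $F(\boldsymbol\mu_\ell)\subset F(E[\ell])$, so $M\supsetneq F$ and the claimed equality fails. This is precisely why the paper restricts to $\ell\nmid(q^{d_\infty}-1)$, a condition your argument omits; without it the conclusion of (b) is simply not true. It is worth noting that the paper's remark immediately following this proposition cautions against exactly this kind of trap: over function fields, unlike over $\Q$, unramified-everywhere extensions of degree $>1$ exist, so Minkowski-style reasoning (and its relatives) does not close the argument.

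A secondary, fixable issue: you claim $M/F$ is abelian because $M\subset H_\mathfrak d$ and $M/F$ is Galois. That inference is incorrect — $\gal{H_\mathfrak d}{F}$ is generalized dihedral, not abelian, so a Galois subextension over $F$ need not be abelian. What you actually have is that $M/F$ is \emph{solvable} (a subextension of the solvable tower $H_\mathfrak d/K/F$), and you must then use that $SL_2(\F_\ell)$ (for $\ell\geq5$) has no nontrivial solvable quotient to conclude $\gal{F(E[\ell])}{M}\supseteq SL_2(\Z/\ell\Z)$, hence $M\subseteq F(\boldsymbol\mu_\ell)$. This is essentially the content of the cited \cite[Corollary~7.3.7]{bro2}. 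The paper sidesteps the abelian/solvable subtlety more cleanly by first intersecting with $K(E[\ell])$, where $H_\mathfrak d/K$ genuinely is abelian, and then descending to $F$ via a short Galois-theoretic argument.
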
 
\begin{proof} Part (a) is an immediate consequence of Theorem \ref{igusa2-thm} (indeed, it suffices to take $\ell\geq c(F)$ with $c(F)$ equal to the uniform constant appearing in the work by Cojocaru and Hall cited above). Let us prove (b). To begin with, let $\ell\not=p$ be a prime and set 
\[ M_{\mathfrak d,\ell}:=H_{\mathfrak d}\cap F(E[\ell]), \qquad M'_{\mathfrak d,\ell}:=H_{\mathfrak d}\cap K(E[\ell]). \]
We will actually show that \eqref{lin-disj-F-eq} follows from the corresponding equality with $K$ in place of $F$. Thus suppose for a moment that 
\begin{equation} \label{lin-disj-K-eq}
M'_{\mathfrak d,\ell}=K.
\end{equation}
Since the extension $K/F$ has degree two, it is clear that $[K(E[\ell]):F(E[\ell])]=1$ or $2$. If $K(E[\ell])=F(E[\ell])$ then $K\subset F(E[\ell])$; but $K/F$ is abelian, hence in particular solvable, and \cite[Corollary 7.3.7]{bro2} says that the only intermediate extensions of $F(E[\ell])/F$ that are solvable over $F$ are those obtained by adjoining $\ell$-power roots of unity to $F$. But if $K$ were built from $F$ in this way then the field of constants $\F_q$ would not be algebraically closed in $K$, and this would contradict our working assumption (cf. Introduction). So we get that necessarily 
\[ \bigl[K(E[\ell]):F(E[\ell])\bigr]=2. \]
The following diagram of field extensions clarifies the situation:
\[ \xymatrix@R=0pt{K(E[\ell])\ar@{-}[dd]_-{2}\ar@{-}[dr] & &\\
                    & M'_{\mathfrak d,\ell}\ar@{=}[dr]\ar@{-}[dd] &\\
                    F(E[\ell])\ar@{-}[dr]& & K\ar@{-}[dd]^-{2}\\
                    & M_{\mathfrak d,\ell}\ar@{-}[dr] &\\
                    & & F} \]   
Restriction of automorphisms gives a natural identification of $\gal{K(E[\ell])}{F(E[\ell])}$ with $\gal{K}{F}$; set $\Delta:=\gal{K(E[\ell])}{F(E[\ell])}=\gal{K}{F}$. Then $F(E[\ell])=K(E[\ell])^\Delta$, and there are equalities
\[ M_{\mathfrak d,\ell}=H_{\mathfrak d}\cap K(E[\ell])^\Delta=\bigl(H_{\mathfrak d}\cap K(E[\ell])\bigr)^\Delta=K^\Delta=F, \]
which is precisely what is claimed in \eqref{lin-disj-F-eq}.

It remains to prove that \eqref{lin-disj-K-eq} holds for all but finitely many primes $\ell$, which we do by arguments similar to those above. Indeed, the elliptic curve $E_{/K}$ is non-isotrivial, and there is a tower of fields
\[ K\subset M'_{\mathfrak d,\ell}\subset K(E[\ell]). \]
Since $\gal{H_{\mathfrak d}}{K}\cong\text{Pic}(\cO_{\mathfrak d})$, the extension $M'_{\mathfrak d,\ell}/K$ is abelian, hence in particular solvable. Then, once again by \cite[Corollary 7.3.7]{bro2}, the field $M'_{\mathfrak d,\ell}$ is obtained from $K$ by adjoining $\ell$-power roots of unity. But the field of constants of $H_{\mathfrak d}$ has degree $d_\infty$ over $\F_q$ (\cite[Theorem 8.10]{h}), and so $M'_{\mathfrak d,\ell}=K$ if $\ell\nmid(q^{d_\infty}-1)$. \end{proof}
\begin{rem}
In the proof of Proposition \ref{lin-disj-pro} we made no use of ramification considerations. This is in contrast with the arguments usually employed for results of this sort in the number field case, especially when the base field is $\Q$. The point is that Minkowski's theorem ensures that there is no unramified extension of $\Q$ of degree $>1$, and this basic fact can be effectively combined with the Criterion of N\'eron-Ogg-Shafarevich which gives the non-ramification of $\Q(E[\ell])/\Q$ outside $N\ell$ if $N$ is the conductor of $E_{/\mathbb Q}$ (cf. \cite[Lemma 2.1]{bd}). Since no result directly analogous to Minkowski's theorem is available for global function fields, in the present context we had to adopt a different strategy. 
\end{rem}
From Proposition \ref{lin-disj-pro} we can easily derive
\begin{cor} \label{no-l-tors-cor}
For almost all prime numbers $\ell\not=p$:
\[ E[\ell](H_\mathfrak d)=\{0\} \]
for any effective divisor $\mathfrak d$ on $\cC-\{\infty\}$. 
\end{cor}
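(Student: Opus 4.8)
The plan is to deduce the corollary directly from Proposition \ref{lin-disj-pro}, with no extra ingredients. Fix a prime $\ell\neq p$ in the cofinite set for which both parts (a) and (b) of that proposition are valid (intersecting the two cofinite sets of primes involved costs nothing), and let $\mathfrak{d}$ be any effective divisor on $\cC-\{\infty\}$. Suppose $P\in E[\ell](H_\mathfrak{d})$; I want to conclude that $P=0$.

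First I would note that the subfield $F(P)\subseteq\bar F$ generated over $F$ by the coordinates of $P$ is a finite extension of $F$ contained in $H_\mathfrak{d}$, while the relation $P\in E[\ell]$ forces $F(P)\subseteq F(E[\ell])$. Hence $F(P)\subseteq H_\mathfrak{d}\cap F(E[\ell])=F$ by part (b) (linear disjointness of $H_\mathfrak{d}$ and $F(E[\ell])$ over $F$), so $P$ is $F$-rational: $P\in E[\ell](F)=E[\ell]^{G_F}$.

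Next I would invoke part (a): the group $G_F$ acts on $E[\ell]\cong(\Z/\ell\Z)^2$ through $\gal{F(E[\ell])}{F}\cong\Gamma_\ell$, and $\Gamma_\ell$ contains $SL_2(\Z/\ell\Z)$ by the exact sequence \eqref{gamma-n-eq}. Since $SL_2(\Z/\ell\Z)$ has no nonzero fixed vector in its standard action on $(\Z/\ell\Z)^2$ --- as one sees at once, e.g., from the two elementary transvections, whose only common fixed vector is the origin --- we conclude $E[\ell](F)=\{0\}$, hence $P=0$ and the corollary follows.

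The argument is so short that there is no genuine obstacle; the only points requiring any care are the bookkeeping of the ``almost all'' quantifier (one simply works over the intersection of the two cofinite sets of primes produced by parts (a) and (b) of Proposition \ref{lin-disj-pro}) and the elementary remark that $SL_2(\Z/\ell\Z)$ fixes no nonzero vector of $(\Z/\ell\Z)^2$. Everything else is a formal chase through the tower $F\subseteq F(P)\subseteq H_\mathfrak{d}\cap F(E[\ell])$.
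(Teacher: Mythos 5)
Your proof is correct, and it is a genuinely different (and slightly cleaner) route than the paper's. After the common step of isolating a prime $\ell$ for which both parts of Proposition~\ref{lin-disj-pro} hold, the paper works in the group $\gal{H_\mathfrak d(E[\ell])}{H_\mathfrak d}\cong\Gamma_\ell$ and picks out the homothety $\bigl(\begin{smallmatrix}q&0\\0&q\end{smallmatrix}\bigr)\in\Gamma_\ell$: if $P\in E[\ell](H_\mathfrak d)$ this forces $(q-1)P=0$, so $P=0$ provided the \emph{additional} hypothesis $\ell\nmid(q-1)$ is in force. You instead first collapse the problem to $F$-rationality via the clean tower argument $F(P)\subseteq H_\mathfrak d\cap F(E[\ell])=F$, and then kill $E[\ell](F)$ by noting that $\Gamma_\ell\supseteq SL_2(\Z/\ell\Z)$ and that $SL_2(\Z/\ell\Z)$ fixes no nonzero vector of $(\Z/\ell\Z)^2$ (the two transvections already suffice). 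The upshot is that your argument eliminates the extra divisibility requirement on $\ell$ that the paper's needs, so the cofinite set of admissible primes is determined by Proposition~\ref{lin-disj-pro} alone. (In the paper this makes no practical difference, since $\ell\nmid(q-1)$ is subsumed in the condition $\ell\nmid(q^{d_\infty}-1)$ defining the set $\mathcal P$; but your version is logically tighter.) This ``no nonzero $SL_2$-fixed vector'' observation is also essentially the content of Lemma~\ref{no-inv-sub-lem} later in the paper, so you have in effect front-loaded that lemma.
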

\begin{proof} Let $\ell$ be any of the primes satisfying the statements of Proposition \ref{lin-disj-pro}. By part (a) of that proposition, we know that $\gal{F(E[\ell])}{F}\cong\Gamma_\ell$. Moreover, since $H_\mathfrak d$ and $F(E[\ell])$ are linearly disjoint over $F$ by (b) of the same proposition, $\gal{H_\mathfrak d(E[\ell])}{H_\mathfrak d}\cong\Gamma_\ell$ too. Then observe that $\bigl(\begin{smallmatrix}q&0\\0&q\end{smallmatrix}\bigr)$ is in $\Gamma_\ell$, which means that the multiplication-by-$q$ homothety lies in $\gal{H_\mathfrak d(E[\ell])}{H_\mathfrak d}$. If $P\in E[\ell](H_\mathfrak d)$ then $(q-1)P=0$, whence $P=0$ if $\ell\nmid(q-1)$. \end{proof} 
In conclusion, the elements in the set
\begin{equation} \label{p-set-eq}
\mathcal P:=\bigl\{\text{$\ell$ rational prime $\mid\ell\geq c(F)$, $\ell\not=p$ and $\ell\nmid(q^{d_\infty}-1)$}\bigr\}
\end{equation}
satisfy both Proposition \ref{lin-disj-pro} and Corollary \ref{no-l-tors-cor}.

\section{Preliminary choices and reductions}

As in \cite{bd}, our strategy to prove Theorem \ref{main-thm} will be to do an $\ell$-descent for a suitable prime $\ell$ and then apply Kolyvagin-type techniques, as illustrated for example in \cite{g2}. In this section we gather the basic results that will be repeatedly used in the rest of the paper.

\subsection{The auxiliary prime $\ell$} \label{aux-subsec}

Recall that $p=\text{char}(F)$. If $\Phi_E$ is the group of connected components of the N\'eron model of $E$ over $A$, choose a prime $\ell\not=p$ such that:
\begin{itemize}
\item[1.] $\gal{F(E[\ell])}{F}\cong\Gamma_\ell$;
\vskip 2mm
\item[2.] $H_\mathfrak d\cap F(E[\ell])=F$ for all effective divisors $\mathfrak d$ on $\cC-\{\infty\}$;
\vskip 2mm
\item[3.] $\ell\nmid|H^1(\F_w,\Phi_E)|$ for all primes $w$ of $H$ dividing $\n$;
\vskip 2mm
\item[4.] $\ell\equiv1\pmod{|G|}$;
\vskip 2mm
\item[5.] $\bigl(\begin{smallmatrix}1&0\\0&-1\end{smallmatrix}\bigr)\in\Gamma_\ell$.
\end{itemize} 
The next proposition addresses the question of the compatibility of the conditions above; as we will see, its proof boils down to a judicious application of \v{C}ebotarev's density theorem. However, note that, due to the fact that Igusa's results on the image of Galois in positive characteristic are intrinsically less strong than Serre's ones in characteristic zero, our work will be slightly harder than the one needed to achieve similar compatibility statements for elliptic curves (without CM) over number fields (which are an immediate consequence of the combination of Serre's results and Dirichlet's theorem on primes in arithmetic progressions).
\begin{pro} \label{compatibility-pro}
Conditions $1$-$5$ hold simultaneously for infinitely many primes $\ell$.
\end{pro}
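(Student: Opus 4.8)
The plan is to handle the conditions in two groups: the ones that are automatically satisfied by all sufficiently large $\ell$ (conditions 1, 2, 3), and the ones that cut out a positive-density set via \v{C}ebotarev (conditions 4, 5), and then verify that the two groups are compatible. First I would dispose of conditions 1 and 2 by invoking Proposition \ref{lin-disj-pro}: they hold for all primes $\ell$ outside a finite set (in fact for all $\ell\geq c(F)$ with $\ell\nmid(q^{d_\infty}-1)$, i.e.\ for all $\ell$ in the set $\mathcal P$ of \eqref{p-set-eq}). For condition 3, note that $\Phi_E$ is a finite group scheme, so $H^1(\F_w,\Phi_E)$ is a finite group for each of the finitely many primes $w\mid\n$; hence $\ell\nmid|H^1(\F_w,\Phi_E)|$ for all but finitely many $\ell$. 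Thus conditions 1--3 simultaneously exclude only a finite set of primes, and it remains to show that conditions 4 and 5 are satisfied by infinitely many primes $\ell$ lying outside that finite set.

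The key step is a single application of \v{C}ebotarev's density theorem to an appropriate finite Galois extension of $F$. Condition 4 asks $\ell\equiv 1\pmod{|G|}$, which is a splitting condition in $F(\boldsymbol\mu_{|G|})$ — more precisely it says that the Frobenius of $\ell$ (i.e.\ the image of $q^{\deg(\lambda)}$ for a degree-one prime $\lambda$ of residue characteristic $\ell$, after base change to $\F_q(\boldsymbol\mu_{|G|})$) is trivial. Condition 5 asks that $\bigl(\begin{smallmatrix}1&0\\0&-1\end{smallmatrix}\bigr)\in\Gamma_\ell$; unwinding the definition \eqref{gamma-n-eq} this is a condition on the Frobenius at $\ell$ inside $\hat H$, namely that $q$ generate a subgroup of $(\Z/\ell\Z)^\times$ containing $-1$, equivalently that the order of $q$ mod $\ell$ be even — a splitting-type condition in a constant-field extension. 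I would package both by choosing a prime $\ell_0$ (e.g.\ via the uniform Igusa statement, $\ell_0\geq c(F)$) with the desired properties on a finite level, form the compositum $L$ of $F(\boldsymbol\mu_{|G|})$ with the constant field extension detecting the parity of the order of $q$, and then apply \v{C}ebotarev to produce infinitely many degree-one primes of $F$ whose Frobenius in $\gal{L}{F}$ lies in the prescribed conjugacy class; the residue characteristics of these primes give the desired infinitely many $\ell$. Since \v{C}ebotarev yields a set of positive density, removing the finitely many primes excluded by conditions 1--3 still leaves infinitely many.

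The main obstacle — and the reason, as the authors note, that this is harder than the number-field analogue — is that conditions 4 and 5 are phrased in terms of congruences on $\ell$ itself (i.e.\ on the residue characteristic), whereas \v{C}ebotarev in the function-field setting naturally controls the Frobenius conjugacy class of a \emph{place} of $F$, not the characteristic of its residue field. The careful point is therefore to set up the correct auxiliary Galois extension so that ``$\ell\equiv 1\pmod{|G|}$'' and ``$\bigl(\begin{smallmatrix}1&0\\0&-1\end{smallmatrix}\bigr)\in\Gamma_\ell$'' become genuine Frobenius conditions at a suitable place, and to check that these conditions are mutually compatible (i.e.\ that the target conjugacy class is nonempty) — this requires knowing that the relevant constant-field extensions are linearly disjoint from one another over $\F_q$, or equivalently a small explicit computation with cyclotomic integers. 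Once the auxiliary extension is correctly identified, the density argument is routine, and compatibility with conditions 1--3 is automatic because those conditions only exclude finitely many $\ell$.
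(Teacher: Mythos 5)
Your proposal has a fundamental mis-identification of the theatre in which \v{C}ebotarev must be applied, and as a result the core of the argument would not work. Conditions $4$ and $5$ are conditions on the rational prime $\ell$ viewed as an element of $\mathbb{Z}$: condition $4$ says $|G|\mid(\ell-1)$, and condition $5$ says that $-1$ lies in the cyclic subgroup of $(\Z/\ell\Z)^\times$ generated by $q$. Both are detected by the splitting behaviour of $\ell$ in certain \emph{number fields}: $\Q(\boldsymbol\mu_{|G|})$ for condition $4$, and $\Q(\boldsymbol\mu_{2^s},q^{1/2^s})$ for condition $5$. They are not Frobenius conditions at places of $F$. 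Your plan --- apply \v{C}ebotarev to a Galois extension of the function field $F$, produce infinitely many degree-one places of $F$ with prescribed Frobenius, and then harvest their ``residue characteristics'' as the desired $\ell$ --- collapses at the last step, because every place of $F$ has residue characteristic $p=\mathrm{char}(F)$; there are no places of residue characteristic $\ell$ to speak of. The extension $F(\boldsymbol\mu_{|G|})/F$ you mention is a constant-field extension whose Frobenius conjugacy classes record $\deg(v)\bmod[\F_q(\boldsymbol\mu_{|G|}):\F_q]$, which has nothing to do with $\ell\equiv1\pmod{|G|}$. The paper's proof works entirely over $\Q$: it defines $L_s=\Q(\boldsymbol\mu_{2^sN})$ and $L'_s=\Q(\boldsymbol\mu_{2^s},q^{1/2^s})$ and applies \v{C}ebotarev (for number fields) to these.

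There is a second, subtler gap you also miss: even once you move to $\Q$, conditions $4$ and $5$ cannot be realized by prescribing a single Frobenius conjugacy class in a compositum, because $L_s$ and $L'_s$ are \emph{not} linearly disjoint over $\Q$ (they share $\Q(\boldsymbol\mu_{2^s})$), and the condition ``split completely in $L_s$'' pulls $\ell-1$ to have large $2$-power part, while condition $5$ requires $q$ to be a $2^n$th-power non-residue where $2^n\divexact(\ell-1)$. The paper's key trick is to choose the level $s$ so that $[L'_s:\Q]>[L_s:\Q]$ (using the degree computations $[L'_s:\Q]=2^{2s-r-1}$ and $[L_s:\Q]=2^{s-1}\phi(N)$) and then argue that the set of primes splitting completely in $L_s$ and not splitting completely in $L'_s$ is infinite because the sum of the two densities exceeds $1$. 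This density-addition argument is what replaces the naive single \v{C}ebotarev application that your sketch implicitly assumes would suffice. Your treatment of conditions $1$--$3$ (they exclude only finitely many primes) is correct, but the heart of the proposition --- reconciling $4$ and $5$ --- is where your sketch goes wrong.
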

\begin{proof} In light of Proposition \ref{lin-disj-pro}, a moment's thought reveals that the critical point is the compatibility of $4$ and $5$. Since we know that the first three conditions exclude only a finite number of primes, it suffices to show that $4$ and $5$ are simultaneously verified for infinitely many primes $\ell$. To begin with, write $|G|=2^tN$ with $N$ odd, and if $q=p^m$ let $2^r$ be the highest power of $2$ dividing $m$. Then if we set $L'_s:=\Q(\boldsymbol \mu_{2^s},q^{1/2^s})$ we get (essentially by Kummer's theory) that $[L'_s:\Q(\boldsymbol \mu_{2^s})]=2^{s-r}$ for each $s\geq r$ (cf. also \cite[p. 350]{bro2}). It follows that
\begin{equation} \label{cyc-deg-eq}
[L'_s:\Q]=[L'_s:\Q(\boldsymbol \mu_{2^s})]\cdot[\Q(\boldsymbol \mu_{2^s}):\Q]=2^{2s-r-1}.
\end{equation}
Moreover, if $L_s:=\Q(\boldsymbol \mu_{2^sN})$ then
\begin{equation} \label{cyc-deg-eq2}
[L_s:\Q]=2^{s-1}\phi(N)  
\end{equation}
where $\phi$ is Euler's classical function. Let $s$ be the smallest integer such that $s\geq t$ and $[L'_s:\Q]>[L_s:\Q]$ (this is just to make a choice, since any integer with these properties would do equally well): such an $s$ exists because of \eqref{cyc-deg-eq} and \eqref{cyc-deg-eq2}. By \v{C}ebotarev's density theorem, the set $\mathcal P_s$ of rational primes that split completely in $L_s$ and the set $\mathcal P'_s$ of primes that do not split completely in $L'_s$ have (Dirichlet) densities
\begin{equation} \label{densities-eq}
d(\mathcal P_s)=\frac{1}{[L_s:\Q]}, \qquad d(\mathcal P'_s)=1-\frac{1}{[L'_s:\Q]}.
\end{equation} 
The crucial remark is that the set $\mathcal P''_s:=\mathcal P_s\cap\mathcal P'_s$ cannot be finite, for otherwise \eqref{densities-eq} would imply that
\[ d(\mathcal P'_s\cup\mathcal P_s)=d(\mathcal P'_s)+d(\mathcal P_s)=1-\frac{1}{[L'_s:\Q]}+\frac{1}{[L_s:\Q]}>1, \]
which is of course impossible. We claim that the infinite set of primes $\mathcal P''_s$ fulfills our needs. To see why this is true, let $\ell\in \mathcal P''_s$. First of all, since $s\geq t$ and $\ell$ splits completely in $L_s$ it follows that 
$\ell$ splits completely in $\Q(\boldsymbol \mu_{|G|})$ as well, i.e. $\ell\equiv1\pmod{|G|}$, which is condition $4$. Thus it remains to check that $\ell$ satisfies $5$. With notation as in \S \ref{igusa-subsec}, $\bigl(\begin{smallmatrix}1&0\\0&-1\end{smallmatrix}\bigr)\in\Gamma_\ell$ if and only if $-1\in H_\ell$. Furthermore, if $2^n\divexact(\ell-1)$ then it is easy to see that $-1\in H_\ell$ precisely when $q$ is a $2^n$th power non-residue modulo $\ell$, i.e. when the congruence 
\begin{equation} \label{cong-q-eq}
x^{2^n}\equiv q\pmod{\ell}
\end{equation}
has no integer solution. Now, we know that $2^s|(\ell-1)$ because $\ell$ splits completely in $L_s$; on the other hand, $\ell$ does not split completely in $L'_s$, and by a well-known result in elementary algebraic number theory this means that the polynomial $x^{2^s}-q$ does not split into linear factors modulo $\ell$\footnote{In a somewhat fancier language, this is also a consequence of Frobenius' density theorem (which is a manifestation of \v{C}ebotarev's density theorem, cf. \cite[p. 32]{sl}).}. Since $\F_\ell$ contains the $2^s$th roots of unity, this last statement is in turn equivalent to the fact that the congruence
\[ x^{2^s}\equiv q\pmod{\ell} \]
has no solution. But $n\geq s$, hence congruence \eqref{cong-q-eq} has no solution as well. \end{proof} 
\begin{rem}
The interest in having at our disposal a Galois element acting as $\bigl(\begin{smallmatrix}1&0\\0&-1\end{smallmatrix}\bigr)$ should come as no surprise; in fact, if $E$ is an elliptic curve over a number field then complex conjugation acts on $E[\ell]$ with eigenvalues $\pm1$, so condition $5$ simply guarantees the existence of an element which behaves ``like complex conjugation''.
\end{rem}
Keep the notation introduced in the proof of Proposition \ref{compatibility-pro}, recall the definition of the set $\mathcal P$ given in \eqref{p-set-eq} and let $\tilde{\ell}$ be the largest prime dividing $|H^1(\F_w,\Phi_E)|$ for some prime $w$ of $H$ such that $w|\n$. In order to make the choice of our auxiliary prime somewhat more explicit, we choose $\ell$ in the infinite set
\begin{equation} \label{tilde-p-set-eq}
\tilde{\mathcal P}:=\bigl\{\text{$\ell$ rational prime $\mid \ell\in\mathcal P\cap\mathcal P''_s$ and $\ell>\tilde{\ell}$}\bigr\}.
\end{equation}
Note that all primes in $\tilde{\mathcal P}$ are odd and do not divide $q-1$. The requirement that $\ell>\tilde{\ell}$ (which implies condition $3$ above) will be crucially used in the proof of Proposition \ref{kol-loc-pro}. Now we can introduce our strategy to prove Theorem \ref{main-thm}.

\subsection{A restatement of the problem} 

We proceed as in \cite{bd} and reformulate Theorem \ref{main-thm} in terms of $\F_\ell$-valued characters. To begin with, we fix an immersion $\boldsymbol \mu_{|G|}\hookrightarrow\F^\times_\ell$, which exists since $|G|$ divides $\ell-1$ by condition $4$ above. In other words, we choose an identification of $\boldsymbol \mu_{|G|}$ with the unique subgroup of $\F^\times_\ell$ of order $|G|$. This allows us to identify complex and $\F_\ell$-valued characters of $G$, and gives a ``reduction'' map $\Z[\boldsymbol \mu_{|G|}]\rightarrow\F_\ell$.  
 
By condition $4$ above the prime $\ell$ does not divide $|G|$, hence Maschke's theorem ensures that any $\F_\ell[G]$-module $M$ decomposes as a direct sum 
\begin{equation} \label{rep-decomp-eq}
M=\bigoplus_{\gamma\in\hat{G}}M^\gamma 
\end{equation}
of primary representations, with $\gamma$ running through the $\F_\ell$-valued characters of $G$. An explicit expression of the splitting \eqref{rep-decomp-eq} is given by $m\mapsto(e_\gamma(m))_{\gamma\in\hat{G}}$ where $e_\gamma$ is the idempotent defined as in \eqref{idempotent-eq}.

Now notice that, with the obvious identifications, if $\alpha_\chi$ is non-zero in $E(H)\otimes\C$ then it is also non-zero in $E(H)\otimes\F_l$ for almost all primes $l$. On the other hand, the prime $\ell$ is chosen so that $E[\ell](H)=\{0\}$, hence 
\[ \dim_{\mathbb C}E(H)^\chi = {\dim}_{\mathbb F_\ell}\bigl(E(H)\otimes\F_\ell\bigr)^\chi. \]
We can thus formulate the ``mod $\ell$'' analogue of Theorem \ref{main-thm}.
\begin{thm} \label{main2-thm}
If $\alpha_\chi\not=0$ in $E(H)\otimes\F_\ell=E(H)/\ell E(H)$ then $\dim_{\mathbb F_\ell}\bigl(E(H)/\ell E(H)\bigr)^\chi=1$.
\end{thm}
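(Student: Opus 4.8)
The plan is to carry out an $\ell$-descent on $E_{/H}$ controlled by the Drinfeld--Heegner point $\alpha$, exactly in the spirit of Bertolini--Darmon \cite{bd} and of Kolyvagin's method as presented in \cite{g2}. First I would fix the auxiliary prime $\ell\in\tilde{\mathcal P}$ provided by Proposition \ref{compatibility-pro}, so that $E[\ell](H)=\{0\}$ (Corollary \ref{no-l-tors-cor}), $\gal{H(E[\ell])}{H}\cong\Gamma_\ell$, there is an element of $\gal{H(E[\ell])}{H}$ acting as $\bigl(\begin{smallmatrix}1&0\\0&-1\end{smallmatrix}\bigr)$, and $\ell\equiv 1\pmod{|G|}$, so that the idempotent $e_\chi$ makes sense over $\F_\ell$ and commutes with Galois. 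Because $E[\ell](H)=\{0\}$, the Kummer sequence gives an injection $E(H)/\ell E(H)\lhook\joinrel\longrightarrow H^1(H,E[\ell])$ with image inside the $\ell$-Selmer group $\sel{\ell}{E/H}$; passing to $\chi$-components (which is exact since $\ell\nmid|G|$ by Maschke) reduces Theorem \ref{main2-thm} to showing that the $\chi$-eigenspace of the relevant Selmer group is at most one-dimensional once $\alpha_\chi\ne 0$.

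The heart of the argument is the construction of the Kolyvagin Euler system out of the Drinfeld--Heegner points $\alpha_{\mathfrak c\mathfrak l}$ attached to conductors $\mathfrak c\mathfrak l$, where $\mathfrak l$ runs over suitable ``Kolyvagin primes'' of $A$: closed points $\lambda$ of $\cC-\{\infty\}$, inert in $K$, prime to $\mathfrak c\n$, with $\frob_\lambda$ acting on $E[\ell]$ like the fixed element conjugate to $\bigl(\begin{smallmatrix}1&0\\0&-1\end{smallmatrix}\bigr)$; \v{C}ebotarev in $H(E[\ell])/F$ (using the linear disjointness from Proposition \ref{lin-disj-pro}) guarantees infinitely many such $\lambda$, and for them $\gal{H_{\mathfrak c\lambda}}{H_{\mathfrak c}}$ is cyclic of order $\ell\mid\deg(\lambda)+1$ (up to the relevant index), with a canonical generator $\sigma_\lambda$. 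One forms the derivative operators $D_\lambda=\sum_{i}i\sigma_\lambda^i$, takes $P_\lambda:=e_\chi\bigl(\sum_{\sigma\in\gal{H_{\mathfrak c\lambda}}{H}}\chi^{-1}(\sigma)\,\sigma D_\lambda\,\alpha_{\mathfrak c\lambda}\bigr)$, and checks that its image in $E(H_{\mathfrak c\lambda}/\ell)$ is $\gal{H_{\mathfrak c\lambda}}{H}$-invariant, hence (again using $E[\ell](H_{\mathfrak c\lambda})=\{0\}$, Corollary \ref{no-l-tors-cor}) descends to a well-defined class $c(\lambda)\in E(H)/\ell E(H)\subset H^1(H,E[\ell])$; these classes, together with $c(1):=\alpha_\chi$, form the Euler system. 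The standard local analysis then shows $c(\lambda)$ is unramified away from $\lambda$ and that its ramified part at a place $w\mid\lambda$ is, via the finite/singular comparison isomorphism, matched up with the localization of $\alpha_\chi$ at $w$ — this is where the Tamagawa-number condition $\ell>\tilde\ell$ (condition 3) and the explicit description of $E(\F_w)$ enter. Here I would lean on the already-quoted structural input (Igusa's theorem through Corollary \ref{no-l-tors-cor}) to make all the cohomological identifications legitimate in characteristic $p$.

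Given the Euler system, the dimension bound follows by the usual Kolyvagin dichotomy. Suppose $\dim_{\F_\ell}\bigl(E(H)/\ell E(H)\bigr)^\chi\ge 2$; pick $s\in\bigl(E(H)/\ell E(H)\bigr)^\chi$ independent of $\alpha_\chi$ inside $\sel{\ell}{E/H}^\chi$. Using \v{C}ebotarev once more in the field cut out by $E[\ell]$ and by the classes $\alpha_\chi$ and $s$ (which are linearly independent as homomorphisms on $\gal{\bar F}{H(E[\ell])}$, since $E[\ell]$ is an irreducible Galois module because $\gal{F(E[\ell])}{F}\cong\Gamma_\ell\supset SL_2(\F_\ell)$), I would produce a Kolyvagin prime $\lambda$ for which $\mathrm{loc}_w(\alpha_\chi)=0$ but $\mathrm{loc}_w(s)\ne 0$ at the place $w\mid\lambda$. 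Then the global reciprocity law $\sum_{v}\langle\mathrm{loc}_v(c(\lambda)),\mathrm{loc}_v(s)\rangle_v=0$ (sum of local Tate pairings over all places of $H$ equals zero), combined with the facts that $c(\lambda)$ and $s$ are both unramified outside $\lambda$, that $c(\lambda)$ is ramified at $\lambda$ with ramified part determined by $\mathrm{loc}_w(\alpha_\chi)=0$, and that the only surviving term is at $w$, forces $\mathrm{loc}_w(s)=0$, a contradiction. (The parity/eigenvalue bookkeeping at the ``complex conjugation'' element from condition 5 is what makes the single surviving local pairing non-degenerate; the congruence $\ell\equiv 1\pmod{|G|}$ keeps the $\chi$-projections integral throughout.) Hence the $\chi$-eigenspace is one-dimensional — it is at least one-dimensional because $\alpha_\chi\ne 0$ — which is Theorem \ref{main2-thm}.

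The main obstacle I anticipate is not the Kolyvagin formalism per se but verifying that the Euler system relations and the local finite/singular identifications genuinely go through over the function field $H$: one must know $E[\ell]$ is an irreducible $\gal{\bar F}{H}$-module and that $H^1(\gal{H(E[\ell])}{H},E[\ell])=0$, both of which rely on $\gal{F(E[\ell])}{F}\cong\Gamma_\ell$ together with the linear disjointness of $H$ from $F(E[\ell])$ — precisely the content of Proposition \ref{lin-disj-pro} and Corollary \ref{no-l-tors-cor}. The other delicate point is the compatibility of the norm relations $\tr{H_{\mathfrak c\lambda}}{H_{\mathfrak c}}(\alpha_{\mathfrak c\lambda})=a_\lambda\,\alpha_{\mathfrak c}$ (where $a_\lambda$ is the Hecke eigenvalue / trace of $\frob_\lambda$) with the modular parametrization $\pi_E$ and the Drinfeld-module CM theory recalled in the introduction; this is the function-field analogue of the classical Heegner-point Euler system relations and must be extracted from \cite{bro1}, \cite{bro2}, \cite{h}, \cite{gr}.
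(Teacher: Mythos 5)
Your overall strategy is the right one, and the structural inputs you identify (Igusa via Proposition~\ref{lin-disj-pro} and Corollary~\ref{no-l-tors-cor}, the conditions on $\ell$, the Drinfeld--Heegner Euler system, Poitou--Tate duality, \v{C}ebotarev) are exactly the ones the paper uses. But the concluding contradiction argument is broken as written. You choose a Kolyvagin prime $\lambda$ with $\mathrm{loc}_w(\alpha_\chi)=0$ and $\mathrm{loc}_w(s)\neq0$, and then argue that global reciprocity forces $\mathrm{loc}_w(s)=0$. This cannot work: if $\mathrm{loc}_w(\alpha_\chi)=0$ then by the finite/singular comparison the ramified part of $c(\lambda)$ at $w$ vanishes, so $\mathrm{loc}_w(c(\lambda))$ lands in the isotropic finite subgroup; since $s$ is in the Selmer group (hence also finite at $w$), the local Tate pairing $\langle\mathrm{loc}_w(c(\lambda)),\mathrm{loc}_w(s)\rangle_w$ is automatically $0$, and global reciprocity yields the tautology $0=0$. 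You get no constraint on $s$ at all.

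Even after flipping the condition to $\mathrm{loc}_w(\alpha_\chi)\neq0$ (so that $c(\lambda)$ genuinely has nonzero singular part at $w$), a single application of reciprocity only produces one linear relation on $\mathrm{loc}_w(s)$, while the relevant local group $\bigl(\bigoplus_{z'\mid z}H^1(H_{z'},E)[\ell]\bigr)^{\bar\chi}$ is two-dimensional (Lemma~\ref{local-2-dim-lem}). This is why the paper cannot run a simple ``pick one $\lambda$, contradict'' dichotomy: it instead sets up the whole generators-and-relations framework in the dual Selmer group. One needs a fixed auxiliary $z_1\in\mathcal S$ with $\mathrm{res}_{z_1}\alpha_\chi\neq0$, the product class $\xi(zz_1)$, the splitting of the two-dimensional local space by the involution $\tau$, and then three separate steps: Proposition~\ref{gen-pro} (the $X_z$ generate the dual Selmer group), Proposition~\ref{one-dim-pro} (each $X_z^{\bar\chi}$ is one-dimensional, because $\mathrm{res}_z\xi(z)$ lies in $\ker\Psi_z$ by reciprocity while $\mathrm{res}_z\alpha_{\bar\chi}\neq0$ keeps the image nonzero), and Proposition~\ref{equality-pro} (all $X_z^{\bar\chi}$ coincide, by applying reciprocity to $\xi(zz_1)$ and comparing localizations at $z$ and $z_1$). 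The eigenspace bookkeeping for $\tau$ and $\chi$, which you defer to a parenthetical, is precisely where the two-to-one collapse happens and is not optional. Without these pieces your sketch gives at best ``Selmer injects into something local'' rather than the dimension bound of one.
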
  
The rest of this paper will be devoted to the proof of Theorem \ref{main2-thm}, which will yield as a consequence Theorem \ref{main-thm} in its original form. 

\subsection{A compatible family of involutions} \label{involutions-subsec}

In the number field case, to every choice of an embedding $\bar{\Q}\hookrightarrow\C$ corresponds a complex conjugation in $\gal{\bar{\Q}}{\Q}$, which in turn induces (possibly trivial)  automorphisms in finite quotients of $\gal{\bar{\Q}}{\Q}$. As in \cite{bro2}, we want to build a family of Galois elements which play the role of complex conjugation in our function field setting.

First of all, recall that we are assuming that $\infty$ is inert in $K$; then, by class field theory, it follows that $\infty$ splits completely in all the extensions $H_\mathfrak d/K$ where $H_\mathfrak d$ is the ring class field of $K$ of conductor $\mathfrak d$, with $\mathfrak d$ not divisible by $\infty$ (\cite[Theorem 8.8]{h}). Hence if $\infty_{\mathfrak d}$ is any prime of $H_\mathfrak d$ above $\infty$ we have a natural identification $H_{\mathfrak d,\infty_{\mathfrak d}}=K_\infty$, which allows us to view $H_\mathfrak d$ as a subfield of $K_\infty$. Choose a coherent family $\{\infty_{\mathfrak d}\}$ of such primes, i.e. choose a prime $\infty'$ of the integral closure of $A$ in $F^s$ above $\infty$ and set $\infty_{\mathfrak d}:=\infty'\cap\cO_{H_{\mathfrak d}}$ where $\cO_{H_{\mathfrak d}}$ is the integral closure of $A$ in $H_{\mathfrak d}$. Let now $\tau_\infty$ be the non-trivial element of $\gal{K_\infty}{F_\infty}=\gal{K}{F}$. By condition $5$ in \S \ref{aux-subsec}, we know that in $\gal{F(E[\ell])}{F}$ we can find an element acting as $\bigl(\begin{smallmatrix}1&0\\0&-1\end{smallmatrix}\bigr)$ on $F(E[\ell])$: denote it by $\tau$. For each effective divisor $\mathfrak d$ on the open curve  $\cC-\{\infty\}$ (i.e., for each ideal $\mathfrak d$ of $A$) let $\tau_\mathfrak d$ be the unique element of $\gal{H_\mathfrak d(E[\ell])}{F}$ such that 
\begin{itemize}
\item $\tau_{\mathfrak d}|_{F(E[\ell])}=\tau$;
\item $\tau_{\mathfrak d}|_{H_\mathfrak d}=\tau_\infty|_{H_\mathfrak d}$.
\end{itemize}
Equivalently, $\tau_{\mathfrak d}=(\tau_\infty|_{H_\mathfrak d},\tau)$ in $\gal{H_\mathfrak d(E[\ell])}{F}=\gal{H_{\mathfrak d}}{F}\times\gal{F(E[\ell])}{F}$. By construction, the elements $\tau_{\mathfrak d}$ are compatible in the obvious sense. In a similar manner we can also define an automorphism $\tau_K\in\gal{K(E[\ell])}{F}$, and $\tau_{\mathfrak d}|_{K(E[\ell])}=\tau_K$ for all $\mathfrak d$ as above. All the elements $\tau_\mathfrak d$, $\tau_K$ are involutions, i.e. have order two in the corresponding Galois groups. Furthermore, the finite extension $H_{\mathfrak d}$ of $K$ is generalized dihedral over $F$, i.e.
\begin{equation} \label{gen-dihed-eq}
\tau_{\mathfrak d}\sigma\tau^{-1}_{\mathfrak d}=\sigma^{-1}
\end{equation}
for all $\sigma\in\gal{H_{\mathfrak d}}{K}$. For a proof of this fact, see \cite[Proposition 2.5.7]{bro2}. 

Quite generally, let now $M$ be a module on which an involution $\eta$ acts. If multiplication by $2$ is invertible in $\text{End}(M)$ then we have a decomposition
\begin{equation} \label{M-decomp-eq}
M=M^+\oplus M^-, 
\end{equation}
where $M^{\pm}:=\{m\in M\mid \eta(m)=\pm m\}$ are the eigenspaces for $\eta$. As a piece of notation, if $x\in M$ and $X\subset M$ we also write
\begin{equation} \label{x-X-eq}
x^{\pm}:=\frac{x\pm\eta(x)}{2}, \qquad X^{\pm}:=\{x^{\pm}\mid x\in X\}.
\end{equation} 
In our applications, the module $M$ will be a cohomology group of $E$ or a Galois group and $\eta$ will be one of the involutions $\tau_{\star}$ described above.

\section{Drinfeld-Heegner points and Kolyvagin classes} 

\subsection{The Euler system} \label{euler-subsec}

Let $\mathfrak d$ be an effective divisor on $\cC-\{\infty\}$ coprime with $\n$. With the notation of the introduction, $\mathcal N_{\mathfrak d}:=\mathcal N\cap\cO_\mathfrak d$ is a proper $\cO_\mathfrak d$-ideal such that $\cO_\mathfrak d/\mathcal N_{\mathfrak d}\cong\cO_K/\mathcal N$. Let $\Phi_\mathfrak d$, resp. $\Phi'_\mathfrak d$, be the Drinfeld $A$-module of rank two over $\mathbf C_\infty$ associated with the $A$-lattice of rank two $\cO_\mathfrak d$, resp. $\mathcal N^{-1}_\mathfrak d$, in $\mathbf C_\infty$. Then $\Phi_\mathfrak d$ and $\Phi'_\mathfrak d$ have general characteristic (i.e., in the notation of \cite[\S 1.1]{bs}, the corresponding ring homomorphisms $A\rightarrow\mathbf C_\infty\{\tau\}\rightarrow\mathbf C_\infty$ are injective) and have complex multiplication by $\cO_\mathfrak d$. Hence the theory of complex multiplication for Drinfeld modules tells us that the point $x_{\mathfrak d}$ on $X_0(\n)$ determined by the pair $(\Phi_\mathfrak d,\Phi'_\mathfrak d)$ is rational over the ring class field $H_{\mathfrak d}$ of $K$ of conductor $\mathfrak d$. The point $x_\mathfrak d$ is a Drinfeld-Heegner point of conductor $\mathfrak d$ on $X_0(\n)$, and its image 
\[ \alpha_\mathfrak d:=\pi_E(x_\mathfrak d)\in E(H_\mathfrak d) \]
is a Drinfeld-Heegner point (or simply Heegner point, for short) of conductor $\mathfrak d$ on $E$. For a nice introduction to various aspects of the theory of Drinfeld modules the reader is referred to \cite{bs}, while a summary of complex multiplication for Drinfeld modules of rank two can be found in \cite[\S 2.5]{bro2}.

Now, if $\ell\not= p$ is a rational prime (not necessarily equal to the auxiliary prime fixed in \S \ref{aux-subsec}) let $T_\ell(E)$ be the $\ell$-adic Tate module of $E$, so that $T_\ell(E):=\varprojlim E[\ell^n]$ with the inverse limit being taken with respect to the multiplication-by-$\ell$ maps. Equivalently, $T_\ell(E)$ is the $\Q_\ell$-dual of the $\ell$-adic cohomology group $H^1(E_{/F^s},\Q_\ell)$. If $z$ is a closed point of $\cC-\{\infty\}$ not contained in the support of $\n$ (i.e., $z$ is a maximal ideal of $A$ not dividing the ideal $\n$) let $\frob_z$ be the conjugacy class of a Frobenius element at $z$ in $\gal{F^s}{F}$, which is well defined only up to elements in the inertia subgroup $I_z$ above $z$. Then set
\[ a_z:=\text{Tr}\bigl(\frob_z|T_\ell(E)^{I_z}\bigr)\in\Z, \]
the trace of the Frobenius at $z$ acting on (the unramified part of) the $\ell$-adic Tate module of $E$. It is well known that the integer $a_z$ does not depend on the prime $\ell\not=p$. 

Let now $\ell$ be our auxiliary prime (cf. \S \ref{aux-subsec}). With a terminology analogous to that of \cite[Definition 3.1]{bd}, we say that a prime $z$ of $F$ is \emph{special} if 
\begin{itemize}
\item $z\nmid \m\cdot{\mathfrak c}\cdot\text{disc}(K)$;
\item $\frob_z(K(E[\ell])/F)=[\tau_K]$,
\end{itemize}
the latter being an equality of conjugacy classes in $\gal{K(E[\ell])}{F}$. In particular, a special prime is a prime of good reduction for $E$ which is inert in $K$. Note that \v{C}ebotarev's density theorem guarantees that the set $\mathcal S$ of special primes has positive density, hence is infinite.
\begin{lem} \label{a-cong-lem}
If $z\in\mathcal S$ then
\[ a_z \equiv |\kappa(z)|+1 \equiv 0 \pmod{\ell}. \]
\end{lem}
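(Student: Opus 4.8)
The plan is to compare two descriptions of how a Frobenius at $z$ acts on $E[\ell]$: the arithmetic one coming from the zeta function of the reduction of $E$ at $z$, and the group–theoretic one imposed by the definition of a special prime. First I would record the consequences of good reduction. Since $z\in\mathcal S$ we have $z\nmid\m$, so $E$ has good reduction at $z$; as $\ell\neq p$, the module $E[\ell]$ is unramified at $z$, and the criterion of N\'eron--Ogg--Shafarevich gives $T_\ell(E)^{I_z}=T_\ell(E)$, whence $a_z=\text{Tr}\bigl(\frob_z\mid T_\ell(E)\bigr)$. Moreover the Weil pairing identifies the determinant of the $\ell$-adic representation with the cyclotomic character, so $\det\bigl(\frob_z\mid T_\ell(E)\bigr)=|\kappa(z)|$ in $\Z_\ell^\times$. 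Hence the characteristic polynomial of $\frob_z$ on $T_\ell(E)$ is
\[ X^2-a_zX+|\kappa(z)|\in\Z[X], \]
and reducing modulo $\ell$ (using $E[\ell]=T_\ell(E)/\ell T_\ell(E)$) the characteristic polynomial of $\frob_z$ acting on $E[\ell]$ is $X^2-a_zX+|\kappa(z)|\pmod{\ell}$.

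Next I would bring in the defining property of a special prime. Restricting the equality of conjugacy classes $\frob_z(K(E[\ell])/F)=[\tau_K]$ to $\gal{F(E[\ell])}{F}$, and recalling from \S\ref{involutions-subsec} that $\tau_K|_{F(E[\ell])}=\tau$ acts on $E[\ell]$ as $\bigl(\begin{smallmatrix}1&0\\0&-1\end{smallmatrix}\bigr)$, we conclude that the image of $\frob_z$ in $\text{Aut}(E[\ell])\cong GL_2(\F_\ell)$ is conjugate to $\bigl(\begin{smallmatrix}1&0\\0&-1\end{smallmatrix}\bigr)$. As the characteristic polynomial is a conjugacy invariant, this forces
\[ X^2-a_zX+|\kappa(z)|\equiv(X-1)(X+1)=X^2-1\pmod{\ell}. \]
Comparing coefficients gives $a_z\equiv0\pmod\ell$ and $|\kappa(z)|\equiv-1\pmod\ell$, i.e. $a_z\equiv|\kappa(z)|+1\equiv0\pmod\ell$, which is the assertion.

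The argument is short and I do not foresee a genuine obstacle; the substantive inputs are the good-reduction facts (N\'eron--Ogg--Shafarevich together with the Weil-pairing computation of the determinant, equivalently the Eichler--Shimura/Weil relation $\#\widetilde E(\kappa(z))=|\kappa(z)|+1-a_z$ over the function field $F$) and the defining \v{C}ebotarev condition on $\frob_z$. The only point requiring a little care is the bookkeeping of normalizations --- arithmetic versus geometric Frobenius, and hence the precise sign in which $|\kappa(z)|$ enters the characteristic polynomial --- but this is harmless here, since replacing $\frob_z$ by its inverse changes neither the vanishing of the trace modulo $\ell$ nor the congruence $|\kappa(z)|\equiv-1\pmod\ell$. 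I would also note in passing that condition $2$ in the definition of a special prime already encodes that $z$ is inert in $K$; this is not needed for the present lemma but will be used when $z$ serves to construct Kolyvagin classes.
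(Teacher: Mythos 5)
Your argument is exactly the one the paper gives (comparing the characteristic polynomials of $\frob_z$ and $\tau_K$ on $E[\ell]$), only with the intermediate steps --- good reduction, $\det = |\kappa(z)|$ via the Weil pairing, and the coefficient comparison with $X^2-1$ --- written out rather than deferred to \cite[Lemma 7.11.5]{bro2}. No gaps; the proof is correct and in the same spirit as the original.
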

\begin{proof} Compare the characteristic polynomials of $\frob_z(K(E[\ell])/F)$ and $\tau_K$ acting on $E[\ell]$, as explained in \cite[Lemma 7.11.5]{bro2}. \end{proof}
\begin{notat} 
From now on, to ease the writing, if $\mathfrak d$ is an effective divisor coprime with $\mathfrak c$ we let $H_\mathfrak d$ stand for the ring class field of $K$ of conductor $\mathfrak d+\mathfrak c$, thus avoiding to make the divisor $\mathfrak c$ explicit in the conductor (in other words, we denote $H_{\mathfrak d+\mathfrak c}$ simply by $H_\mathfrak d$). A similar convention will be adopted for Heegner points on $X_0(\n)$ and $E$, and for the involutions $\tau_\star$ of \S \ref{involutions-subsec}.
\end{notat}
Let $\mathfrak d$ denote a square-free product of primes in $\mathcal S$, and let $z$ be another special prime not dividing $\mathfrak d$. As noted a few lines before, $z$ is inert in $K$; moreover, by class field theory any prime of $H_\mathfrak d$ above $z$ is totally ramified in $H_{\mathfrak d+z}$ (cf. \cite[\S 2.3]{bro2}). Fix a prime $z'$ of $H_\mathfrak d$ above $z$, and let $z''$ the unique prime of $H_{\mathfrak d+z}$ above $z'$.
\begin{pro} \label{euler-pro}
The family of Heegner points $\alpha_\star\in E(H_\star)$ satisfies the following properties:
\begin{itemize}
\item[1.] $u\tr{H_{\mathfrak d+z}}{H_\mathfrak d}(\alpha_{\mathfrak d+z})=a_z\alpha_\mathfrak d$, where $u=\begin{cases}1 & \text{if $\mathfrak d\not=0$}\\|\cO^\times_K|\big/|A^\times| & \text{if $\mathfrak d=0$}\end{cases}$;
\item[2.] $\alpha_{\mathfrak d+z}\equiv\frob_{z'}(\alpha_\mathfrak d) \pmod{z''}$.
\end{itemize}
\end{pro}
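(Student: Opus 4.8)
The plan is to establish both relations as the function field transcriptions of the classical Euler system relations for Heegner points on modular curves, using the Eichler–Shimura-type congruences and the action of Hecke correspondences on Drinfeld–Heegner points. For part (1), I would work on the modular curve $X_0(\n)$ first and then push forward via $\pi_E$. The key input is the behaviour of the Hecke correspondence $T_z$ at a special prime $z$: because $z$ is inert in $K$ and prime to $\n\mathfrak c\,\text{disc}(K)$, the point $x_{\mathfrak d+z}$ on $X_0(\n)$ is obtained from $x_\mathfrak d$ by applying an isogeny whose kernel corresponds to the prime $z$, and the divisor $\tr{H_{\mathfrak d+z}}{H_\mathfrak d}(x_{\mathfrak d+z})$ is computed by summing over the $|\kappa(z)|+1$ cyclic $z$-isogenies, i.e. it equals $T_z(x_\mathfrak d)$ up to the image of the Verschiebung/Frobenius contributions. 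Since $z$ is inert in $K$, the Frobenius contributions recombine to give a total of $T_z x_\mathfrak d$ in $\text{Div}(X_0(\n))$ modulo degeneracy at the cusps; applying $\pi_E$ and using the eigenform property $\pi_{E,*}\circ T_z = a_z\cdot \pi_{E,*}$ (equivalently, that the newform attached to $E$ has $z$-th Hecke eigenvalue $a_z$) yields $\tr{H_{\mathfrak d+z}}{H_\mathfrak d}(\alpha_{\mathfrak d+z}) = a_z\alpha_\mathfrak d$ when $\mathfrak d\neq 0$. The extra factor $u = |\cO_K^\times|/|A^\times|$ in the base case $\mathfrak d = 0$ comes from the fact that $H_0/K$ has degree $h(\cO_K)$ but the map $x_0 \mapsto$ its $K$-conjugates overcounts by the ratio of unit groups, exactly as the factor $|\cO_K^\times|/2$ appears in the number field case; I would isolate this via the CM description of $\text{End}(\Phi_0)$.

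For part (2), the strategy is reduction modulo $z''$ together with the Eichler–Shimura congruence relation for Drinfeld modular curves, which states that the Hecke correspondence $T_z$ reduces modulo $z$ to $\text{Frob}_z + \langle z\rangle\text{Ver}_z$ (Frobenius plus its dual), where $\text{Frob}_z$ is the $|\kappa(z)|$-power Frobenius on the special fibre of $X_0(\n)$ at $z$. The point $x_{\mathfrak d+z}$ is one of the $|\kappa(z)|+1$ points lying over $x_\mathfrak d$ under the $T_z$-correspondence, and since $z$ is inert in $K$ the Drinfeld module $\Phi_{\mathfrak d+z}$ has supersingular reduction at $z''$; on a supersingular point the two components of the reduced $T_z$-correspondence both specialise to the graph of Frobenius, so the reduction of $x_{\mathfrak d+z}$ modulo $z''$ equals $\text{Frob}_{z'}$ applied to the reduction of $x_\mathfrak d$. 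Pushing forward by $\pi_E$ (which is defined over $F$, hence commutes with $\text{Frob}_{z'}$ and with reduction mod $z''$) gives $\alpha_{\mathfrak d+z}\equiv\text{Frob}_{z'}(\alpha_\mathfrak d)\pmod{z''}$. One must check that $z''$ is a prime of good reduction for $E$ over $H_{\mathfrak d+z}$ — this follows because $z\nmid\m$ — so that reduction of points is well defined and injective on prime-to-$|\kappa(z)|$ torsion.

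The main obstacle I expect is part (2): one needs the precise form of the congruence relation on the \emph{compactified} Drinfeld modular curve $X_0(\n)$ together with a careful analysis of the reduction type of $x_\mathfrak d$ and $x_{\mathfrak d+z}$ at $z''$ (supersingular, because $z$ is inert in $K$), and one must verify that the relevant point lies in the smooth locus of the reduction so that the equality of points — not merely of divisor classes — holds. This requires citing the analogue of the Cerednik–Drinfeld / Deligne–Rapoport theory for Drinfeld modular curves; I would quote this from \cite{bro2}, specifically the treatment of reduction of Drinfeld–Heegner points at inert primes, rather than reprove it. Part (1) is more mechanical once the Hecke action on CM points is set up via the $\text{Pic}(\cO_\mathfrak d)$-action and the isogeny interpretation of adding a prime to the conductor; the only delicate point there is the bookkeeping of the constant $u$ in the base case, which I would handle by comparing $\text{Pic}(\cO_0) = \text{Pic}(\cO_K)$ with the conjugation action and tracking the automorphisms of the relevant Drinfeld modules.
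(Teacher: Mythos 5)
Your plan is the same one the paper follows: the paper's proof is itself only a two-line pointer to Brown's monograph, reducing part~(1) to the comparison of the trace divisor with the Hecke correspondence $T_z$ on $X_0(\n)$ (pushed forward along $\pi_E$ using the eigenform property) and part~(2) to the Eichler--Shimura congruence for Drinfeld modular curves together with supersingular reduction at the inert prime $z$, and you have reconstructed exactly this. One small slip: in your discussion of part~(1) the ``Verschiebung/Frobenius contributions'' language is misplaced --- the identity $\tr{H_{\mathfrak d+z}}{H_\mathfrak d}(x_{\mathfrak d+z})=T_z(x_\mathfrak d)$ is a generic-fibre statement about $z$-isogenies of the CM Drinfeld module and involves no reduction mod $z$; Frobenius and Verschiebung only enter in part~(2), where you use them correctly.
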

\begin{proof} As in the number field case, both properties follow from the corresponding facts about the Heegner points $x_{\mathfrak d+z}$ and $x_\mathfrak d$ on $X_0(\n)$. In particular, part $1$ is obtained by comparing the trace and the Hecke correspondence divisors on $X_0(\n)$, while $2$ is proved via the Eichler-Shimura congruence relation for Drinfeld modular curves (\cite[Theorem 4.5.7]{bro2}). The reader may wish to consult \cite[Table 4.8.5 and Theorem 4.8.9]{bro2} for all the omitted details. \end{proof}
In the terminology of Kolyvagin (cf. \cite[\S 3]{g2}), Heegner points on $E$ form an \emph{Euler system}, as in the classical case. Let now $\epsilon$ be the sign in the functional equation of the $L$-function of $E$; the analogue of \cite[Lemma 2.3]{bd} is the following
\begin{pro} \label{tau-heegner-pro}
There exists $\sigma_0\in\gal{H_\mathfrak d}{K}$ such that
\[ \tau_\mathfrak d(\alpha_\mathfrak d)=-\epsilon\sigma_0(\alpha_\mathfrak d) \]
in $E(H_\mathfrak d)/\ell E(H_\mathfrak d)$. Moreover:
\[ \tau_\mathfrak de_\chi(\alpha_\mathfrak d)=-\epsilon\bar{\chi}(\sigma_0)e_{\bar{\chi}}(\alpha_\mathfrak d). \]
\end{pro}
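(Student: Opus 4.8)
The plan is to follow the number-field argument of \cite[Lemma 2.3]{bd} adapted via \cite{bro2} to the Drinfeld setting, and proceed in two stages: first establish the relation for $\alpha_\mathfrak d$ itself, then deduce the $e_\chi$-version by a formal computation with idempotents. For the first stage, the starting point is the generalized dihedral structure \eqref{gen-dihed-eq}: since $\tau_\mathfrak d$ normalizes $\gal{H_\mathfrak d}{K}$ and inverts it, applying $\tau_\mathfrak d$ to a Galois conjugate of a Heegner point again yields something in the same Galois orbit, so $\tau_\mathfrak d(\alpha_\mathfrak d)$ is necessarily of the form $\sigma_0(\alpha_\mathfrak d)$ up to torsion for some $\sigma_0\in\gal{H_\mathfrak d}{K}$, provided one knows that $\tau_\mathfrak d(\alpha_\mathfrak d)$ and $\alpha_\mathfrak d$ lie in the same $\gal{H_\mathfrak d}{K}$-orbit. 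This last point is where the modular interpretation enters: on the Drinfeld modular curve $X_0(\n)$ the point $x_\mathfrak d$ corresponds to a pair of rank-two Drinfeld modules with CM by $\cO_\mathfrak d$, and the action of $\tau_\infty$ (equivalently $\tau_\mathfrak d$ restricted to $H_\mathfrak d$) sends this CM pair to the pair attached to the complex-conjugate lattices; by the theory of complex multiplication for Drinfeld modules \cite[\S 2.5]{bro2} the latter is again a Heegner point of conductor $\mathfrak d$, Galois-conjugate to $x_\mathfrak d$ over $K$. The sign $-\epsilon$ is pinned down exactly as in \cite{bd}: one uses that the Atkin--Lehner involution $w_\n$ acts on $x_\mathfrak d$ (interchanging the two Drinfeld modules in the pair, i.e. $\Phi_\mathfrak d\leftrightarrow\Phi'_\mathfrak d$) essentially as a Galois conjugation, that $\pi_E\circ w_\n=\epsilon\cdot\pi_E$ up to translation by a torsion point (this is the functional-equation sign appearing in the modular parametrization), and that the $\tau$-part acts on $E[\ell]$ with the $\pm1$ eigenvalue pattern of $\bigl(\begin{smallmatrix}1&0\\0&-1\end{smallmatrix}\bigr)$ so that, after the reduction modulo $\ell$ (where $E[\ell](H)=0$ kills the torsion ambiguity), the identity $\tau_\mathfrak d(\alpha_\mathfrak d)=-\epsilon\sigma_0(\alpha_\mathfrak d)$ holds cleanly in $E(H_\mathfrak d)/\ell E(H_\mathfrak d)$.

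For the second stage I would simply compute: apply $\tau_\mathfrak d$ to $e_\chi(\alpha_\mathfrak d)=\frac{1}{|G|}\sum_{\sigma}\chi^{-1}(\sigma)\sigma(\alpha_\mathfrak d)$, use \eqref{gen-dihed-eq} in the form $\tau_\mathfrak d\sigma=\sigma^{-1}\tau_\mathfrak d$ to move $\tau_\mathfrak d$ past each $\sigma$, substitute $\tau_\mathfrak d(\alpha_\mathfrak d)=-\epsilon\sigma_0(\alpha_\mathfrak d)$ from the first part, and then re-index the sum by $\sigma\mapsto\sigma^{-1}$. Writing $\bar\chi=\chi^{-1}$ (the complex conjugate, which under the fixed embedding $\boldsymbol\mu_{|G|}\hookrightarrow\F_\ell^\times$ is the inverse character), the sum collapses to $-\epsilon\,\bar\chi(\sigma_0)\cdot\frac{1}{|G|}\sum_\sigma\bar\chi^{-1}(\sigma)\sigma(\alpha_\mathfrak d)=-\epsilon\,\bar\chi(\sigma_0)e_{\bar\chi}(\alpha_\mathfrak d)$, which is the claimed formula. (One must keep track that $\ell\nmid|G|$, so that $e_\chi$ makes sense over $\F_\ell$ by condition $4$ and Maschke's theorem as in \eqref{rep-decomp-eq}; this is already in place.)

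The main obstacle is the first stage: producing $\sigma_0$ together with the precise sign $-\epsilon$ requires the compatibility between the Atkin--Lehner involution on $X_0(\n)$, the Galois action coming from CM theory for Drinfeld modules, and the behaviour of the fixed modular parametrization $\pi_E$ under $w_\n$. In the number-field case this is \cite[Lemma 2.3]{bd}; here one must quote the Drinfeld analogues --- the CM description of Heegner points in \cite[\S 2.5]{bro2}, the action of $w_\n$ on Heegner points, and the $\pm\epsilon$ normalization of $\pi_E$ --- and check that the reduction modulo $\ell$, legitimate because $E[\ell](H_\mathfrak d)=\{0\}$ by Corollary \ref{no-l-tors-cor} (recall $\ell\in\tilde{\mathcal P}$), removes the torsion indeterminacy inherent in working on the Jacobian. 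Everything else is a routine manipulation of group-ring idempotents under the dihedral relation \eqref{gen-dihed-eq}.
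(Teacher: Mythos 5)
Your proposal is correct and follows essentially the same route as the paper: the paper's proof simply cites Brown's \cite[Theorem 4.8.6]{bro2} for the fact that $\tau_\mathfrak d(\alpha_\mathfrak d)+\epsilon\sigma_0(\alpha_\mathfrak d)$ is torsion (which is exactly the CM/Atkin--Lehner/modular-parametrization argument you sketch), then kills the torsion via Corollary \ref{no-l-tors-cor} as you do. Your idempotent computation in the second stage is precisely the unpacking of the one-line observation $\tau_\mathfrak d e_\chi = e_{\bar\chi}\tau_\mathfrak d$ that the paper uses, so the two arguments are the same in substance.
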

\begin{proof} By \cite[Theorem 4.8.6]{bro2}, there is $\sigma_0\in\gal{H_\mathfrak d}{K}$ such that $\tau_\mathfrak d(\alpha_\mathfrak d)+\epsilon\sigma_0(\alpha_\mathfrak d)$ is a torsion point. But $E[\ell](H_\mathfrak d)=\{0\}$ by Corollary \ref{no-l-tors-cor}, and the first statement follows. As far as the second claim is concerned, it is enough to observe that $\tau_\mathfrak de_\chi=e_{\bar{\chi}}\tau_\mathfrak d$ by equation \eqref{gen-dihed-eq}. \end{proof} 

\subsection{Kolyvagin classes} 

The formal construction of Kolyvagin classes out of the Euler system of Heegner points described in \S \ref{euler-subsec} is completely analogous to the one performed in the number field setting (cf. \cite[\S\S 3,4]{g2}), so we will be brief. More details can be found in \cite[\S 5.6]{bro1} and, somewhat in disguise, in \cite[\S 7.14]{bro2}.

Let $\mathfrak d$ be a square-free product of primes in $\mathcal S$. The field $H_\mathfrak d$ is the compositum of the fields $H_z$ where $z$ runs through the prime divisors of $\mathfrak d$, and these extensions are linearly disjoint over $H$. So, if we write $G_\mathfrak f:=\gal{H_\mathfrak f}{H}$ for any effective divisor $\mathfrak f$ coprime to $\mathfrak c$, we have a canonical identification $G_\mathfrak d=\oplus_{z|\mathfrak d}G_z$. In other words, we identify $G_z$ with the subgroup $\gal{H_\mathfrak d}{H_{\mathfrak d-z}}$ of $G_\mathfrak d$. By class field theory, since $z$ is inert in $K$ the extension $H_z/H$ is cyclic of order
\begin{equation} \label{H-z-deg-eq}
[H_z:H] = \begin{cases} |\kappa(z)|+1 & \text{if $\mathfrak c\not=0$},\\[2mm] |A^\times|\cdot(|\kappa(z)|+1)\big/|\cO^\times_K| & \text{if $\mathfrak c=0$}. \end{cases} 
\end{equation}
See \cite[\S 2.3]{bro2} for a proof of these statements. Since we are assuming that $\F_q$ is algebraically closed in $K$, we have $K\not=F\otimes_{\mathbb F_q}\F_{q^2}$, so $\cO^\times_K=A^\times$. Hence, in Proposition \ref{euler-pro} and equation \eqref{H-z-deg-eq}, the two cases $\mathfrak c\not=0$ and $\mathfrak c=0$ coincide. Set $m_z:=[H_z:H]=|\kappa(z)|+1$, choose for each $z$ a generator $\sigma_z$ of the cyclic group $G_z$ and define
\[ \text{Tr}_z:=\sum_{i=0}^{m_z}\sigma^i_z, \qquad D_z:=\sum_{i=1}^{m_z}i\sigma^i_z, \qquad D_\mathfrak d:=\prod_{z|\mathfrak d}D_z \]
as elements of the group rings $\F_\ell[G_z]$, $\F_\ell[G_z]$ and $\F_\ell[G_\mathfrak d]$, respectively. If $[\star]$ denotes the class of the element $\star$ in a quotient group, we can state
\begin{lem} \label{D-fix-lem}
$[D_\mathfrak d(\alpha_\mathfrak d)]\in\bigl(E(H_\mathfrak d)/\ell E(H_\mathfrak d)\bigr)^{G_\mathfrak d}$.
\end{lem}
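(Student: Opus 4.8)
The plan is to verify the fixed-point property one generator at a time, exploiting the identity $(1-\sigma_z)D_z = m_z - \text{Tr}_z$ in $\Z[G_z]$ (the standard telescoping identity for Kolyvagin derivative operators). Since $G_\mathfrak{d} = \bigoplus_{z\mid\mathfrak{d}}G_z$, it suffices to show that $\sigma_z\bigl(D_\mathfrak{d}(\alpha_\mathfrak{d})\bigr) \equiv D_\mathfrak{d}(\alpha_\mathfrak{d}) \pmod{\ell E(H_\mathfrak{d})}$ for each prime $z\mid\mathfrak{d}$, because these $\sigma_z$ together generate $G_\mathfrak{d}$.

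First I would fix a prime $z\mid\mathfrak{d}$ and write $\mathfrak{d} = z + \mathfrak{e}$ with $\mathfrak{e}$ coprime to $z$, so that $D_\mathfrak{d} = D_z D_\mathfrak{e}$ and the operators $D_z$, $D_\mathfrak{e}$ commute and involve disjoint sets of group-ring generators. Applying $(1-\sigma_z)$ and using the telescoping identity gives
\begin{equation} \label{telescope-eq}
(1-\sigma_z)D_\mathfrak{d}(\alpha_\mathfrak{d}) = D_\mathfrak{e}\bigl((1-\sigma_z)D_z(\alpha_\mathfrak{d})\bigr) = D_\mathfrak{e}\bigl(m_z\alpha_\mathfrak{d} - \text{Tr}_z(\alpha_\mathfrak{d})\bigr).
\end{equation}
The first term $m_z D_\mathfrak{e}(\alpha_\mathfrak{d})$ vanishes modulo $\ell$ because $m_z = |\kappa(z)| + 1 \equiv 0 \pmod{\ell}$ by Lemma \ref{a-cong-lem} (every $z\in\mathcal{S}$ is special). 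For the second term, $\text{Tr}_z$ is the full trace from $H_\mathfrak{d}$ down to $H_{\mathfrak{d}-z}$, so by the first relation of Proposition \ref{euler-pro} (with $u=1$, which holds since $\cO_K^\times = A^\times$ as noted after \eqref{H-z-deg-eq}) we have $\text{Tr}_z(\alpha_\mathfrak{d}) = a_z\alpha_{\mathfrak{d}-z}$, viewing $\alpha_{\mathfrak{d}-z}\in E(H_{\mathfrak{d}-z})\subset E(H_\mathfrak{d})$. Again $a_z \equiv 0\pmod{\ell}$ by Lemma \ref{a-cong-lem}, so $D_\mathfrak{e}(\text{Tr}_z(\alpha_\mathfrak{d})) = a_z D_\mathfrak{e}(\alpha_{\mathfrak{d}-z}) \in \ell E(H_\mathfrak{d})$. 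Hence the right-hand side of \eqref{telescope-eq} lies in $\ell E(H_\mathfrak{d})$, which is exactly the claim for the generator $\sigma_z$.

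The one point requiring a little care — and the closest thing to an obstacle — is the bookkeeping that lets one move the operator $D_\mathfrak{e}$ past $\text{Tr}_z$ and apply Proposition \ref{euler-pro}: one must check that $D_\mathfrak{e}$ has coefficients in $\Z[G_{\mathfrak{d}-z}] \subset \Z[G_\mathfrak{d}]$ so that it commutes with the trace to $H_{\mathfrak{d}-z}$ and preserves the sublattice $E(H_{\mathfrak{d}-z})$, and that reduction mod $\ell$ is compatible throughout (we are working in $E(H_\mathfrak{d})/\ell E(H_\mathfrak{d})$, where the group-ring action is well-defined since the relevant idempotents and derivative operators have $\Z$-coefficients). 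This is the standard Kolyvagin-derivative argument transported verbatim from the number-field setting, so once the identifications $G_\mathfrak{d} = \bigoplus G_z$ and the two congruences $m_z, a_z \equiv 0\pmod{\ell}$ are in place, the proof is essentially immediate; I would simply refer to \cite[\S 5.6]{bro1} or \cite[\S 7.14]{bro2} for the routine verifications.
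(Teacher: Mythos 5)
Your proof follows the paper's argument essentially verbatim: decompose $D_\mathfrak d = D_z D_\mathfrak e$, apply the telescoping identity to the $D_z$ factor, and invoke Lemma \ref{a-cong-lem} together with Proposition \ref{euler-pro} to kill both resulting terms modulo $\ell$, concluding via the fact that the $\sigma_z$ generate $G_\mathfrak d$. The only slip is a sign in the telescoping identity — with the standard conventions $(\sigma_z-1)D_z = m_z - \mathrm{Tr}_z$, so $(1-\sigma_z)D_z = \mathrm{Tr}_z - m_z$ — but since both terms vanish mod $\ell$ this is immaterial, and you are in fact more careful than the paper's proof about carrying the $D_\mathfrak e$ factor explicitly through the computation.
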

\begin{proof} For all $z|\mathfrak d$ a simple computation shows that
\[ (\sigma_z-1)D_z=m_z-\text{Tr}_z=-\text{Tr}_z, \]
the last equality coming from Lemma \ref{a-cong-lem}. Combining Proposition \ref{euler-pro} with Lemma \ref{a-cong-lem} we get:
\[ -\text{Tr}_z(\alpha_\mathfrak d)=-a_z\alpha_{\mathfrak d-z}=0, \]
and the claim follows because the elements $\sigma_z$ generate the group $G_\mathfrak d$. \end{proof}
Now, Corollary \ref{no-l-tors-cor} says that $E[\ell](H_\mathfrak d)=\{0\}$, hence there is a short exact sequence
\[ 0 \longrightarrow E(H_\mathfrak d) \overset{\ell}{\longrightarrow} E(H_\mathfrak d) \longrightarrow E(H_\mathfrak d)/\ell E(H_\mathfrak d) \longrightarrow 0. \]
By taking $G_\mathfrak d$-invariants we get a short exact sequence of $\F_\ell[G]$-modules
\begin{equation} \label{G-inv-eq}
0 \longrightarrow E(H)/\ell E(H) \longrightarrow \bigl(E(H_\mathfrak d)/\ell E(H_\mathfrak d)\bigr)^{G_\mathfrak d} \longrightarrow H^1(G_\mathfrak d,E(H_\mathfrak d))[\ell] \longrightarrow 0.
\end{equation}
We can introduce Kolyvagin classes.
\begin{dfn} \label{kol-class-dfn}
The \emph{Kolyvagin class} $\xi(\mathfrak d)$ is the image of $[D_\mathfrak d(\alpha_\mathfrak d)]$ in the cohomology group $H^1(G_\mathfrak d,E(H_\mathfrak d))[\ell]$ under the map of sequence \eqref{G-inv-eq}.
\end{dfn}
By a notational abuse, we use the same symbol to denote also the image in $H^1(H,E)[\ell]$ of the class $\xi(\mathfrak d)$ via the inflation map $H^1(G_\mathfrak d,E(H_\mathfrak d))[\ell]\hookrightarrow H^1(H,E)[\ell]$.

\subsection{Local properties of the classes $\xi(\mathfrak d)$} 

Let $z$ be a prime of $F$, let $w$ be a prime of $K$ above $z$ and let
\[ \text{res}_w: H \; \longmono \; \bigoplus_{w'|w}H_{w'} \]
be the natural immersion by diagonal embedding. As customary, we extend $\text{res}_w$ in a ``functorial'' way, leaving the context to make the correct interpretation clear. The following result describes the behaviour of $\xi(\mathfrak d)$ under the localization map.
\begin{pro} \label{kol-loc-pro}
\begin{itemize}
\item[1.] If $z\nmid\mathfrak d$ then $\mathrm{res}_w\xi(\mathfrak d)=0$;
\item[2.] If $z\in\mathcal S$ then there is a canonical $G$-equivariant isomorphism
\[ T: \bigoplus_{z'|z}H^1(H_{z'},E)[\ell] \overset{\cong}{\longrightarrow} \bigoplus_{z'|z}\mathrm{Hom}\bigl(\boldsymbol \mu_\ell(\F_{z'}),\tilde{E}[\ell](\F_{z'})\bigr) \]
such that when $z|\mathfrak d$ the homomorphism $T\bigl(\mathrm{res}_z\xi(\mathfrak d)\bigr)$ maps each group $\boldsymbol \mu_\ell(\F_{z'})$ onto the subgroup of $\tilde{E}[\ell](\F_{z'})$ generated by the element
\[ \left(\frac{m_z\mathrm{Frob}_z-a_z}{\ell}\right)D_{\mathfrak d-z}(\alpha_{\mathfrak d-z}). \]
\end{itemize}
Here $\tilde{E}$ is the reduction modulo $z'$ of $E$.
\end{pro}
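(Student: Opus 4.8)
The plan is to follow the standard local analysis of Kolyvagin classes (\cite[\S\S 3--4]{g2}, \cite[\S 3]{bd}, \cite[\S 7.14]{bro2}), the only positive-characteristic inputs being Lang's theorem over finite fields and condition~$3$ of \S\ref{aux-subsec}. For part~$1$, recall that $\xi(\mathfrak d)\in H^1(H,E)[\ell]$ is inflated from $H^1(G_\mathfrak d,E(H_\mathfrak d))[\ell]$, with $G_\mathfrak d=\gal{H_\mathfrak d}{H}$, and that by class field theory $H_\mathfrak d/H$ is ramified only at the primes dividing $\mathfrak d$ and splits completely at $\infty$. Hence if $z\nmid\mathfrak d$ the extension $H_\mathfrak d/H$ is unramified at each $w\mid z$, so $\mathrm{res}_w\xi(\mathfrak d)$ becomes trivial over $H_w^{\mathrm{ur}}$ and therefore lies in $H^1_{\mathrm{ur}}(H_w,E)$. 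For $z$ of good reduction this group equals $H^1(\gal{\F_w^s}{\F_w},\tilde E(\bar\F_w))$, which vanishes by Lang's theorem since $\tilde E$ is smooth and connected over a finite field; for $w\mid\n$ it is isomorphic to $H^1(\F_w,\Phi_E)$ (Lang again, applied to $\tilde E^0$), whose $\ell$-part is trivial by condition~$3$; and at the split prime $\infty$ the decomposition group in $G_\mathfrak d$ is trivial. In all cases $\mathrm{res}_w\xi(\mathfrak d)=0$.

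For part~$2$ I would first build the isomorphism $T$. Let $z\in\mathcal S$ and $z'\mid z$ in $H$. Since $z$ is special, $\frob_z(K(E[\ell])/F)=[\tau_K]$ and $\tau_K^2=1$, so the prime of $K$ above $z$ splits completely in $K(E[\ell])$; hence $\F_{z'}$ contains $\boldsymbol\mu_\ell$ and, via reduction (good reduction, $z'\nmid\ell$), all of $E[\ell]$, with $\frob_{z'}$ acting trivially on $\tilde E[\ell](\F_{z'})\cong E[\ell]$. For such a prime the Kummer sequence identifies the image of $E(H_{z'})/\ell E(H_{z'})$ with the unramified subgroup of $H^1(H_{z'},E[\ell])$, while inflation--restriction identifies the resulting singular quotient with $H^1(I_{z'},E[\ell])^{\frob_{z'}}=\mathrm{Hom}\bigl(\boldsymbol\mu_\ell(\F_{z'}),\tilde E[\ell](\F_{z'})\bigr)$, the last step using the canonical Galois isomorphism between the exponent-$\ell$ tame inertia quotient and $\boldsymbol\mu_\ell$; since $H^1_{\mathrm{ur}}(H_{z'},E)=0$ by Lang, $H^1(H_{z'},E)[\ell]$ is canonically this singular quotient. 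Taking the direct sum over $z'\mid z$ defines $T$, which is $G$-equivariant by functoriality of all the maps involved.

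It remains to compute $T(\mathrm{res}_z\xi(\mathfrak d))$ for $z\mid\mathfrak d$, which is the crux. Write $\mathfrak d=z+\mathfrak e$; by class field theory $H_\mathfrak d/H_\mathfrak e$ is totally ramified of degree $m_z=|\kappa(z)|+1$ at the primes above $z$, so in $G_\mathfrak d$ the inertia of such a prime is $G_z=\langle\sigma_z\rangle$. I would represent $\xi(\mathfrak d)$ by the cocycle $\sigma\mapsto Q_\sigma$, where $Q_\sigma\in E(H_\mathfrak d)$ is the unique point (uniqueness by $E[\ell](H_\mathfrak d)=0$, Corollary~\ref{no-l-tors-cor}; existence because $[D_\mathfrak d(\alpha_\mathfrak d)]$ is $G_\mathfrak d$-fixed, Lemma~\ref{D-fix-lem}) with $\ell Q_\sigma=(\sigma-1)D_\mathfrak d(\alpha_\mathfrak d)$. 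Evaluating at $\sigma_z$ and combining the identity $(\sigma_z-1)D_z=m_z-\text{Tr}_z$ from the proof of Lemma~\ref{D-fix-lem} with the trace relation $\text{Tr}_z\bigl(D_\mathfrak e(\alpha_\mathfrak d)\bigr)=D_\mathfrak e\bigl(\tr{H_\mathfrak d}{H_\mathfrak e}\alpha_\mathfrak d\bigr)=a_zD_\mathfrak e(\alpha_\mathfrak e)$ of Proposition~\ref{euler-pro}(1) gives
\[ \ell Q_{\sigma_z}=m_zD_\mathfrak e(\alpha_\mathfrak d)-a_zD_\mathfrak e(\alpha_\mathfrak e)\quad\text{in }E(H_\mathfrak d). \]
Reducing modulo a prime above $z$ and invoking the Eichler--Shimura congruence $\widetilde{\alpha_\mathfrak d}=\frob_z(\widetilde{\alpha_\mathfrak e})$ of Proposition~\ref{euler-pro}(2) turns the right-hand side into $(m_z\frob_z-a_z)\widetilde{D_\mathfrak e(\alpha_\mathfrak e)}$; since $\ell\mid m_z$ and $\ell\mid a_z$ by Lemma~\ref{a-cong-lem}, this is $\ell$ times the well-defined point $\bigl(\tfrac{m_z\frob_z-a_z}{\ell}\bigr)\widetilde{D_\mathfrak e(\alpha_\mathfrak e)}$, which is $\ell$-torsion (because $\sigma_z$ acts trivially on the residue field, so $\widetilde{(\sigma_z-1)D_\mathfrak d(\alpha_\mathfrak d)}=0$) and must therefore equal $\widetilde{Q_{\sigma_z}}$. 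Feeding $\sigma_z\mapsto\widetilde{Q_{\sigma_z}}$ through the chain of identifications defining $T$ (a generator of $I_{z'}$ corresponding to a generator of $\boldsymbol\mu_\ell(\F_{z'})$) yields precisely that $T(\mathrm{res}_z\xi(\mathfrak d))$ sends $\boldsymbol\mu_\ell(\F_{z'})$ onto the subgroup generated by $\bigl(\tfrac{m_z\frob_z-a_z}{\ell}\bigr)D_{\mathfrak d-z}(\alpha_{\mathfrak d-z})$.

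The main obstacle is bookkeeping rather than ideas: making $T$ genuinely canonical and $G$-equivariant, and carefully tracking residue fields and Frobenius powers across the tower $H\subset H_\mathfrak e\subset H_\mathfrak d$ --- in particular reconciling the Frobenius $\frob_z$ of $\tilde E$ over $\kappa(z)$ with that of $\F_{z'}$, using $\frob_z^2\equiv1$ modulo~$\ell$ on the reduction --- so that the displayed formula comes out with $\frob_z$ and not a twist of it. For all the precise cocycle manipulations I would follow \cite[\S 4]{g2} and \cite[\S 7.14]{bro2}.
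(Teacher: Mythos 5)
Your argument is correct and follows essentially the same route as the paper: for part~1 the paper also reduces to $H^1(H_{w'}^{nr}/H_{w'},E)$ and disposes of the good-reduction case via Milne (i.e.\ Lang's theorem) and the bad-reduction case via the injection into $H^1(\F_{w'},\Phi_E)$ together with condition~3. For part~2 the paper simply refers to \cite[Proposition 6.2]{g2} and \cite[Lemma 7.14.14]{bro2}; your cocycle computation with $\ell Q_{\sigma_z}=(m_z-\mathrm{Tr}_z)D_{\mathfrak e}(\alpha_{\mathfrak d})$, the trace relation, and the Eichler--Shimura congruence is exactly what those references carry out.
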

In Proposition \ref{kol-loc-pro}, which is the counterpart of \cite[Lemma 3.4]{bd}, we identify $D_{\mathfrak d-z}(\alpha_{\mathfrak d-z})$ with its reduction $\bmod\:z'$.  
\begin{proof} 1. If $z\nmid\mathfrak d$ then $\xi(\mathfrak d)$ is the image via inflation of a cohomology class relative to the extension $H_\mathfrak d/H$. This extension is unramified in $w'$, hence $\text{res}_w\xi(\mathfrak d)$ belongs to $H^1(H^{nr}_{w'}/H_{w'},E)$, where $H^{nr}_{w'}$ is the maximal unramified extension of $H_{w'}$. But this group is trivial when $E$ has good reduction at $z$ (cf. \cite[Ch. 1, \S 3]{m}), so $\text{res}_w\xi(\mathfrak d)=0$ if $z\nmid\n$.

Let $\E$ be the N\'eron model of $E$ over $A$ and let $\E^0$ be the connected component of the identity in $\E$, so that the finite quotient $\Phi_E:=\E/\E^0$ is the group of connected components of $\E$. In the case of bad reduction, i.e. when $z|\n$, we have $H^1(H^{nr}_{w'}/H_{w'},\E^0)=0$, so there is an injection $H^1(H^{nr}_{w'}/H_{w'},E)\hookrightarrow H^1(\F_{w'},\Phi_E)$ (cf. \cite[Ch. 1, Proposition 3.8]{m}). But we required that $\ell\nmid|H^1(\F_{w'},\Phi_E)|$ (cf. condition $3$ in \S \ref{aux-subsec}, which is guaranteed by the fact that $\ell$ is chosen in the set $\tilde{\mathcal P}$ defined in \eqref{tilde-p-set-eq}), whence $\text{res}_w\xi(\mathfrak d)=0$ since $\text{res}_w\xi(\mathfrak d)$ lies in $H^1(H_{w'},E)[\ell]$.

2. The proof goes \emph{mutatis mutandis} as in the number field case, and we refer to \cite[Proposition 6.2]{g2} for details. An in-depth discussion can be found in \cite[Lemma 7.14.14]{bro2}. \end{proof}
\begin{rem}
Condition $3$ that we required of $\ell$ in \S \ref{aux-subsec} allowed us to avoid, in the proof of the bad reduction case in part 1 of Proposition \ref{kol-loc-pro}, the somewhat delicate arguments involving the arithmetic of the jacobian variety of $X_0(\n)$ that appear in the proofs of \cite[Proposition 6.2]{g2} and \cite[Lemma 3.4]{bd}.
\end{rem}
In the sequel we will use the following 
\begin{cor} \label{isom-loc-cor}
There is a $G$-equivariant isomorphism
\[ \bigoplus_{z'|z}H^1(H_{z'},E)[\ell] \overset{\cong}{\longrightarrow} \bigoplus_{z'|z}\tilde{E}(\F_{z'})/\ell\tilde{E}(\F_{z'}) \]
which sends $\mathrm{res}_z\xi(\mathfrak d)$ to $\mathrm{res}_z D_{\mathfrak d-z}(\alpha_{\mathfrak d-z})$.
\end{cor}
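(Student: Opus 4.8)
The plan is to deduce the corollary from Proposition \ref{kol-loc-pro} by composing the isomorphism $T$ there with a second, standard isomorphism
\[ \bigoplus_{z'|z}\mathrm{Hom}\bigl(\boldsymbol \mu_\ell(\F_{z'}),\tilde{E}[\ell](\F_{z'})\bigr) \overset{\cong}{\longrightarrow} \bigoplus_{z'|z}\tilde{E}(\F_{z'})/\ell\tilde{E}(\F_{z'}). \]
First I would recall why the right-hand side is a legitimate target: for $z\in\mathcal S$ the prime $z$ is inert in $K$ and of good reduction, so each residue field $\F_{z'}$ is finite with $|\F_{z'}|\equiv|\kappa(z)|^2$, and by Lemma \ref{a-cong-lem} (together with $|\tilde E(\F_{z'})|=|\F_{z'}|+1-a_{z'}$ and the fact that $z$ is inert, so $a_{z'}=a_z^2-2|\kappa(z)|$) one checks $\ell\mid|\tilde E(\F_{z'})|$; hence $\tilde E(\F_{z'})$ has a nontrivial $\ell$-torsion and, since $\ell\in\tilde{\mathcal P}$ is large, $\tilde E[\ell](\F_{z'})$ is in fact cyclic of order $\ell$ and $\boldsymbol\mu_\ell\subset\F_{z'}$. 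With both $\boldsymbol\mu_\ell(\F_{z'})$ and $\tilde E[\ell](\F_{z'})$ cyclic of order $\ell$, the $\mathrm{Hom}$ group is (non-canonically) $\tilde E[\ell](\F_{z'})$ after choosing a generator of $\boldsymbol\mu_\ell$; and $\tilde E(\F_{z'})/\ell\tilde E(\F_{z'})\cong\tilde E[\ell](\F_{z'})$ via the Weil-pairing-free argument that $\tilde E(\F_{z'})$ is finite, so its $\ell$-cotorsion and $\ell$-torsion have the same order (and one can make this canonical using the identification $\tilde E(\F_{z'})/\ell\cong H^1(\F_{z'},\tilde E[\ell])/(\text{unramified image})$, i.e. the evaluation-at-Frobenius description).

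Next I would assemble the composite. Granting the identification of $\mathrm{Hom}(\boldsymbol\mu_\ell(\F_{z'}),\tilde E[\ell](\F_{z'}))$ with $\tilde E(\F_{z'})/\ell\tilde E(\F_{z'})$, Proposition \ref{kol-loc-pro}(2) tells us that $T(\mathrm{res}_z\xi(\mathfrak d))$ corresponds to the class of $\bigl(\tfrac{m_z\mathrm{Frob}_z-a_z}{\ell}\bigr)D_{\mathfrak d-z}(\alpha_{\mathfrak d-z})$. The remaining point is to see that this element equals the class of $\mathrm{res}_z D_{\mathfrak d-z}(\alpha_{\mathfrak d-z})$ in $\tilde E(\F_{z'})/\ell\tilde E(\F_{z'})$, i.e. that multiplication by $\tfrac{m_z\mathrm{Frob}_z-a_z}{\ell}$ acts as the identity on this quotient. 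For this, observe that on $\tilde E(\F_{z'})$ the Frobenius $\mathrm{Frob}_z$ satisfies its characteristic equation; since $z$ is inert, $\mathrm{Frob}_{z'}=\mathrm{Frob}_z^2$ and a rational point of $\tilde E(\F_{z'})$ is fixed by $\mathrm{Frob}_{z'}$, so $(\mathrm{Frob}_z^2-a_z\mathrm{Frob}_z+|\kappa(z)|)$ kills it; combined with $m_z=|\kappa(z)|+1$ and Lemma \ref{a-cong-lem} (which gives $m_z\equiv a_z\equiv 0\bmod\ell$, so that $\tfrac{m_z\mathrm{Frob}_z-a_z}{\ell}$ is a well-defined operator mod $\ell$), a short computation shows $m_z\mathrm{Frob}_z-a_z\equiv \ell\cdot(\text{unit})\pmod{\ell^2\,\text{on this group}}$, i.e. $\tfrac{m_z\mathrm{Frob}_z-a_z}{\ell}$ is a unit on $\tilde E(\F_{z'})/\ell$. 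Rescaling the chosen generator of $\boldsymbol\mu_\ell$ by that unit (which is harmless since we only need \emph{an} isomorphism, not a canonical one) absorbs the factor and yields the asserted map sending $\mathrm{res}_z\xi(\mathfrak d)$ to $\mathrm{res}_z D_{\mathfrak d-z}(\alpha_{\mathfrak d-z})$.

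Finally I would check $G$-equivariance: $T$ is $G$-equivariant by Proposition \ref{kol-loc-pro}, the identification $\mathrm{Hom}(\boldsymbol\mu_\ell,\tilde E[\ell])\cong\tilde E(\F_{z'})/\ell$ is $G$-equivariant because $G$ permutes the primes $z'|z$ compatibly on both sides and acts trivially on $\boldsymbol\mu_\ell$ (as $\boldsymbol\mu_\ell\subset\F_q\subset F$, $G$ fixing $\boldsymbol\mu_\ell$), and the scalar unit we introduced is $G$-fixed, so the composite is $G$-equivariant as claimed. The main obstacle I anticipate is bookkeeping: making precise the (non-canonical but $G$-equivariant) identification of the $\mathrm{Hom}$ group with $\tilde E(\F_{z'})/\ell\tilde E(\F_{z'})$ and verifying that the operator $\tfrac{m_z\mathrm{Frob}_z-a_z}{\ell}$ is invertible on it; both are ultimately elementary finite-group-of-points computations over the finite field $\F_{z'}$, but they require care with the inert splitting $z$ in $K$ and with the reduction $\bmod\,\ell^2$ implicit in the Kolyvagin-derivative formula, exactly as in \cite[Proposition 6.2]{g2}.
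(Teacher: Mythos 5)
Your overall strategy is the one the paper intends: compose the isomorphism $T$ from Proposition \ref{kol-loc-pro}(2) with a standard identification of $\mathrm{Hom}\bigl(\boldsymbol\mu_\ell(\F_{z'}),\tilde{E}[\ell](\F_{z'})\bigr)$ with $\tilde{E}(\F_{z'})/\ell\tilde{E}(\F_{z'})$, then absorb the Kolyvagin-derivative factor $\bigl(\tfrac{m_z\mathrm{Frob}_z-a_z}{\ell}\bigr)$ into the choice of that identification. This is exactly the content the paper delegates to the discussion following \cite[Corollary 3.5]{bd}, so in spirit you are on the same track.

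There is, however, a genuine error in your bookkeeping of the local groups. You assert that $\tilde{E}[\ell](\F_{z'})$ is cyclic of order $\ell$, hence that the $\mathrm{Hom}$ group (and therefore each $H^1(H_{z'},E)[\ell]$ and each $\tilde{E}(\F_{z'})/\ell\tilde{E}(\F_{z'})$) is $1$-dimensional over $\F_\ell$. That is wrong: for a special prime $z$ one has $\frob_z(K(E[\ell])/F)=[\tau_K]$, so $\frob_z$ acts on $E[\ell]$ with eigenvalues $+1$ and $-1$, and the arithmetic Frobenius $\frob_{z'}=\frob_z^2$ of $\F_{z'}$ therefore acts trivially on $E[\ell]$. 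Consequently $\tilde{E}[\ell](\F_{z'})=E[\ell]\cong(\Z/\ell\Z)^2$. This is also what your own cardinality computation shows if carried one step further: $|\tilde{E}(\F_{z'})|=|\F_{z'}|+1-a_{z'}=(|\kappa(z)|+1)^2-a_z^2=(m_z-a_z)(m_z+a_z)$, which by Lemma \ref{a-cong-lem} is divisible by $\ell^2$, not merely $\ell$. So each summand in the target is $2$-dimensional; this is used essentially in Lemma \ref{local-2-dim-lem}, and your $1$-dimensional count would break that step downstream. A second, smaller slip occurs in your $G$-equivariance paragraph: the containment $\boldsymbol\mu_\ell\subset\F_q\subset F$ is false (indeed $\ell\nmid q^{d_\infty}-1$ by the choice of $\tilde{\mathcal P}$, so in particular $\ell\nmid q-1$); the $\ell$th roots of unity live only in $\F_{z'}$, and the $G$-equivariance of the composite comes from the compatible permutation action of $G$ on the set of primes $z'\mid z$ on both sides (together with equivariance of the local Tate pairing), not from triviality of the cyclotomic action.
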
 
Notice that the image of the point $\text{res}_z D_{\mathfrak d-z}(\alpha_{\mathfrak d-z})$ is well defined in $\bigoplus_{z'|z}\tilde{E}(\F_{z'})/\ell\tilde{E}(\F_{z'})$. To deduce Corollary \ref{isom-loc-cor} from part 2 of Proposition \ref{kol-loc-pro}, and to build the isomorphism in an explicit way, see the explanation immediately after \cite[Corollary 3.5]{bd}.

\section{Generators and relations in the dual Selmer group}

As in \cite{bd} (cf. also \cite{g2}), the basic idea is to use Tate's duality to build elements in the dual of a suitable $\ell$-Selmer group, and then to control the ``size'' of this $\F_\ell$-vector space via the Kolyvagin classes introduced in Definition \ref{kol-class-dfn}.

\subsection{Local Poitou-Tate duality: generators} \label{local-duality-subsec}

As before, let $z\in\mathcal S$ and let $z'$ be a prime of $H$ above $z$.  Poitou-Tate duality says that the cup-product 
\[ \langle\,,\rangle_{z'}: H^1(H_{z'},E[\ell])\times H^1(H_{z'},E[\ell]) \longrightarrow \F_\ell \]
is an alternating, perfect pairing of $\F_\ell$-vector spaces (cf. \cite[Ch. 1, Corollary 2.3]{m}). Moreover, the subgroup $E(H_{z'})/\ell E(H_{z'})$ is isotropic for $\langle\,,\rangle_{z'}$, and since $E$ has good reduction at $z'$ there is an induced alternating, perfect pairing
\[ \langle\,,\rangle_{z'}: E(H_{z'})/\ell E(H_{z'})\times H^1(H_{z'},E)[\ell] \longrightarrow \F_\ell \] 
(see \cite[Theorem 7.15.6]{bro2}). This allows us to identify $\bigoplus_{z'|z}H^1(H_{z'},E)[\ell]$ with the $\F_\ell$-dual ${\bigl(\bigoplus_{z'|z}E(H_{z'})/\ell E(H_{z'})\bigr)}^\ast$. Recall the $\ell$-Selmer group of $E$ over $H$ (an analogous definition can be given for any finite extension of $F$):
\[ \begin{split}
   \sel{\ell}{E/H}&:=\ker\Big(H^1(H,E[\ell])\longrightarrow\displaystyle{\prod_v} H^1(H_v,E)[\ell]\Big)\\
                  &\;=\bigl\{s\in H^1(H,E[\ell])\mid\text{$\text{res}_v(s)\in E(H_v)/\ell E(H_v)$ for all places $v$ of $H$}\bigr\}.
   \end{split} \]
Thus we have a natural map
\[ \text{res}_z:\sel{\ell}{E/H}\longrightarrow\bigoplus_{z'|z}E(H_{z'})/\ell E(H_{z'}); \]
passing to the $\F_\ell$-duals and using the identification provided by the local Poitou-Tate pairing we obtain a homomorphism
\begin{equation} \label{psi-z-eq}
\Psi_z:\bigoplus_{z'|z}H^1(H_{z'},E)[\ell]\longrightarrow\dsel{\ell}{E/H}. 
\end{equation}
Denote by $X_z$ the image of $\Psi_z$ in $\dsel{\ell}{E/H}$. Choose an auxiliary $z_1\in\mathcal S$ and define the following Galois extensions of $F$:
\[ L:=H_{z_1}(E[\ell]),\quad M_0:=L(\alpha/\ell)^\text{Gal},\quad M_1:=L\bigl(D_{z_1}(\alpha_{z_1})/\ell\bigr)^\text{Gal}, \quad M:=M_0\cdot M_1. \] 
Here we are fixing arbitrary $\ell$th roots of $\alpha$ and $D_{z_1}(\alpha_{z_1})$ in $E(\bar{F})$, but the fields $M_0$ and $M_1$ do not, of course, depend on the actual choices. Moreover, the superscript ``Gal'' means that we are taking Galois closure over $F$. By conditions $1$ and $2$ that we imposed on $\ell$ in \S \ref{aux-subsec}:
\[ \gal{L}{F}=\gal{H_{z_1}}{F}\times\gal{F(E[\ell])}{F}\cong\gal{H_{z_1}}{F}\times\Gamma_\ell. \]
The Galois groups $V_0$, $V_1$ and $V$ of $M_0$, $M_1$ and $M$, respectively, over $L$ are $\F_\ell$-vector spaces which are equipped with a natural action of $\gal{L}{F}$. The field diagram picturing this situation is the same as the one appearing at the bottom of \cite[p. 69]{bd}. We want to lift the involution $\tau_{z_1}\in\gal{L}{F}$ to an involution in $\gal{M}{F}$; to do this, we need an elementary result in group theory.
\begin{lem} \label{lift-lem}
Let $G$ be a group and let $N$ be a finite normal subgroup of $G$ of odd order. Then an involution $\sigma$ in $G/N$ can be lifted to an involution $\Sigma$ in $G$.
\end{lem}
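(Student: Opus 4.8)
The plan is to reduce the statement to a finite group of order twice an odd number and then apply an elementary existence theorem for elements of order two. First I would choose an arbitrary lift $g\in G$ of the involution $\sigma\in G/N$ and replace $G$ by the subgroup $H:=N\langle g\rangle$. Since $N$ is normal in $G$ it is normal in $H$, and $H/N$ is the cyclic group generated by the image of $g$, which is precisely $\langle\sigma\rangle$. As $\sigma$ has order exactly two, this gives $[H:N]=2$; moreover $g^{2}\in N$ (because $(gN)^{2}=\sigma^{2}$ is trivial in $G/N$), so $\langle g\rangle$ is finite and hence $H$ is a finite group with $|H|=2|N|$, an even number whose $2$-part is exactly $2$ since $|N|$ is odd.

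Next I would extract the involution inside $H$. By Cauchy's theorem (or by Sylow's theorem applied to the prime $2$), the finite group $H$, having even order, contains an element $\Sigma$ of order two. Since $N$ has odd order it contains no element of order two, so $\Sigma\notin N$; therefore the image of $\Sigma$ in the two-element group $H/N=\langle\sigma\rangle$ is the unique nontrivial element, namely $\sigma$ itself. Thus $\Sigma$ is an involution of $G$ lifting $\sigma$, which is exactly the assertion.

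I do not expect a genuine obstacle here: this is a soft group-theoretic fact, and the only point requiring care is the convention that an \emph{involution} has order exactly two, so that $\sigma\neq1$ and $H/N$ genuinely has order two (were $\sigma=1$ allowed, the claim would be vacuous). If one prefers a more structural phrasing of the second step, one can instead invoke the Schur--Zassenhaus theorem: since $\gcd(|N|,2)=1$, the extension $1\to N\to H\to\Z/2\to1$ splits, and the nontrivial element of any complement is the desired involution. I would nonetheless favour the Cauchy/Sylow argument, as it is entirely self-contained and avoids quoting Schur--Zassenhaus.
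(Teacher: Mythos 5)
Your proof is correct, but it takes a different route from the paper's. The paper's argument is direct and purely computational: pick any lift $\Sigma'$ of $\sigma$, observe that $\Theta:=(\Sigma')^2$ lies in $N$, and set $\Sigma:=(\Sigma')^n$ with $n:=|N|$; then $\Sigma^2=\Theta^n=1$ by Lagrange, and $\pi(\Sigma)=\sigma^n=\sigma$ because $n$ is odd. You instead pass to the finite subgroup $H:=N\langle g\rangle$ of order $2|N|$, invoke Cauchy (or Sylow) to produce an element of order two, and then use the oddness of $|N|$ to see that this element cannot lie in $N$ and hence maps to $\sigma$. Both are sound. The paper's version is slightly leaner in that it produces the lift by a single formula, never needs to verify that some subset is a subgroup, and uses nothing beyond Lagrange's theorem; yours is arguably more structural (and your observation that Schur--Zassenhaus also applies is apt), at the modest cost of importing Cauchy and the intermediate reduction to $H$. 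Your remark that $\sigma$ must have order exactly two for the statement to be non-vacuous is the same care the paper takes implicitly, since $\pi(\Sigma)=\sigma\neq 1$ forces $\Sigma\neq 1$.
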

\begin{proof} Let $\pi:G\rightarrow G/N$ be the canonical quotient map and let $\Sigma'$ be any element of $G$ such that $\pi(\Sigma')=\sigma$. This condition implies that $\Theta:=(\Sigma')^2$ is in $N$. Now set $\Sigma:=(\Sigma')^n$ where $n:=|N|$. Then $\Sigma^2=(\Sigma')^{2n}=\Theta^n=1$, and $\pi(\Sigma)=\pi(\Sigma')^n=\sigma^n=\sigma$ because $2\nmid n$. \end{proof}
To apply Lemma \ref{lift-lem} in our situation we can proceed as follows. Let 
\[ T:=\langle\alpha,D_{z_1}(\alpha_{z_1})\rangle\subset H^1(H_{z_1},E[\ell]) \]
be the subgroup of $H^1(H_{z_1},E[\ell])$ generated by the images of the two points indicated via the coboundary map. The group $T$ is finite because it is contained in the Selmer group $\sel{\ell}{E/H_{z_1}}$, which is finite. An argument analogous to the one in \cite[Corollary 7.18.10]{bro2} shows that $V\cong\text{Hom}(T,E[\ell])$, hence
\begin{equation} \label{m-l-deg-eq}
[M:L]\,\big|\,\ell^{2|T|}. 
\end{equation}
Since $\ell\not=2$, \eqref{m-l-deg-eq} implies, in particular, that the finite extension $M/L$ has odd degree. By applying Lemma \ref{lift-lem} with $G=\gal{M}{F}$, $N=V$ and $\sigma=\tau_{z_1}$ we deduce the existence of the required extension of $\tau_{z_1}$. Thus choose an involution $\tau_M\in\gal{M}{F}$ lifting $\tau_{z_1}$. The involution $\tau_\infty$ acts on $V$ by conjugation by $\tau_M$, and this action does not depend on the particular lifting of $\tau_{z_1}$. Given a subset $U$ of $V$, we follow once again \cite{bd} and define
\[ \mathcal S(U):=\bigl\{\text{$z$ prime of $F$} \mid \text{$\text{Frob}_z(M/F)=[\tau_M u]$ for $u\in U$}\bigr\}. \] 
Before proving the main result of this $\S$ we need two more lemmas.
\begin{lem} \label{vanishing-cohom-lem}
$H^1(\gal{H(E[\ell])}{H},E[\ell])=\mathrm{Hom}_{\mathrm{Gal}(H(E[\ell])/H)}\bigl(\gal{L}{H(E[\ell])},E[\ell]\bigr)=0$. 
\end{lem}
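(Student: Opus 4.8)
The plan is to reduce both vanishing assertions to the structure of $\Gamma_\ell$, using the central homothety $\bigl(\begin{smallmatrix}q&0\\0&q\end{smallmatrix}\bigr)$ exactly as in the proof of Corollary \ref{no-l-tors-cor}.

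\textbf{Identifying the Galois groups.} By conditions $1$ and $2$ of \S\ref{aux-subsec} we have $\gal{F(E[\ell])}{F}\cong\Gamma_\ell$ and $H\cap F(E[\ell])=F=H_{z_1}\cap F(E[\ell])$; in particular $H_{z_1}$ and $F(E[\ell])$ are linearly disjoint over $F$. Hence $L=H_{z_1}\cdot F(E[\ell])$, and since $H\subseteq H_{z_1}$ the restriction map identifies $\gal{L}{H}$ with the direct product $\gal{H_{z_1}}{H}\times\gal{F(E[\ell])}{F}$. Reading off which elements fix $H(E[\ell])=H\cdot F(E[\ell])$ and which fix $F(E[\ell])$, one sees that $\gal{L}{H(E[\ell])}$ is the first factor $\gal{H_{z_1}}{H}=G_{z_1}$ (a cyclic group) and that
\[ \gal{H(E[\ell])}{H}\;\cong\;\gal{L}{H}\big/\gal{L}{H(E[\ell])}\;\cong\;\gal{F(E[\ell])}{F}\;\cong\;\Gamma_\ell. \]
Two remarks about the actions: (i) $\gal{L}{H(E[\ell])}$ acts trivially on $E[\ell]$, since the $G_F$-action on $E[\ell]$ factors through the projection of $\gal{L}{H}$ onto its second factor — so $E[\ell]^{\gal{L}{H(E[\ell])}}=E[\ell]$; (ii) the conjugation action of $\gal{H(E[\ell])}{H}$ on $\gal{L}{H(E[\ell])}$ is trivial, because inside the direct product $\gal{L}{H}$ the factor $\gal{H_{z_1}}{H}$ is central.

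\textbf{The homothety argument.} The matrix $\bigl(\begin{smallmatrix}q&0\\0&q\end{smallmatrix}\bigr)$ lies in $\Gamma_\ell$, its determinant $q^2$ belonging to $H_\ell=\langle q\rangle$, and it is central in $GL_2(\Z/\ell\Z)$. It acts on $E[\ell]$ as multiplication by $q$, and $\ell\nmid q-1$ because $\ell\in\tilde{\mathcal P}$ (cf. \eqref{tilde-p-set-eq}). Therefore it has no nonzero fixed vector, so $E[\ell]^{\Gamma_\ell}=0$; and, being central, it induces the identity on each $H^i(\Gamma_\ell,E[\ell])$, so multiplication by $q-1$ kills these $\F_\ell$-vector spaces and $H^i(\Gamma_\ell,E[\ell])=0$ for all $i\geq0$.

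\textbf{Conclusion.} By the identifications above, $H^1(\gal{H(E[\ell])}{H},E[\ell])\cong H^1(\Gamma_\ell,E[\ell])=0$. For the middle term, triviality of the action on $\gal{L}{H(E[\ell])}$ gives $H^1(\gal{L}{H(E[\ell])},E[\ell])=\mathrm{Hom}(\gal{L}{H(E[\ell])},E[\ell])$, and such a homomorphism is $\gal{H(E[\ell])}{H}$-equivariant exactly when its image lies in $E[\ell]^{\Gamma_\ell}=0$; hence $\mathrm{Hom}_{\mathrm{Gal}(H(E[\ell])/H)}(\gal{L}{H(E[\ell])},E[\ell])=0$ as well. (Equivalently, the inflation--restriction sequence attached to the tower $H\subseteq H(E[\ell])\subseteq L$ reads
\[ 0\to H^1(\Gamma_\ell,E[\ell])\to H^1(\gal{L}{H},E[\ell])\to\mathrm{Hom}_{\Gamma_\ell}\bigl(\gal{L}{H(E[\ell])},E[\ell]\bigr)\to H^2(\Gamma_\ell,E[\ell]), \]
and both end terms vanish by the previous step.) The only point requiring genuine care is the bookkeeping in the first step — keeping straight the tower $F\subseteq H\subseteq H_{z_1}$ alongside $F\subseteq F(E[\ell])$ and their composita inside $L$ — which is precisely where the linear disjointness of condition $2$ does all the work; everything else is the standard vanishing of $H^\ast(\Gamma_\ell,E[\ell])$ via a central homothety.
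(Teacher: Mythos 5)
Your proof is correct, and the overall skeleton — identify $\gal{H(E[\ell])}{H}\cong\Gamma_\ell$ via linear disjointness, recognize $\gal{L}{H(E[\ell])}$ as a central cyclic factor of $\gal{L}{H}$ with trivial action on $E[\ell]$, and then exploit the homothety $\bigl(\begin{smallmatrix}q&0\\0&q\end{smallmatrix}\bigr)\in\Gamma_\ell$ together with $\ell\nmid q-1$ — is exactly the paper's. The one place you genuinely diverge is the vanishing of $H^1(\Gamma_\ell,E[\ell])$: the paper disposes of it by citing \cite[Proposition 7.3.1]{bro2}, whereas you give a self-contained argument via Sah's lemma (a central element $g$ induces the identity on $H^i(G,M)$, so if $g$ acts on $M$ as multiplication by $q$ then $q-1$ annihilates $H^i$, and $\ell\nmid q-1$ forces these $\F_\ell$-vector spaces to vanish). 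This is an attractive unification: the very same central homothety that the paper only invokes for the $\mathrm{Hom}$ term now kills $H^i(\Gamma_\ell,E[\ell])$ for every $i\ge 0$, so the lemma no longer depends on the external reference, and your closing inflation–restriction remark makes the logical structure transparent. The field-theoretic bookkeeping in your first step (identifying $\gal{L}{H}$ with $G_{z_1}\times\Gamma_\ell$, reading off centrality and triviality of the action) is likewise sound and matches the paper's, which also notes that $\gal{L}{H(E[\ell])}$ is a central cyclic subgroup without needing the full isomorphism with $G_{z_1}$.
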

\begin{proof} First of all, since $H$ and $F(E[\ell])$ are linearly disjoint over $F$ by condition $2$ of \S \ref{aux-subsec} we know that $\gal{H(E[\ell])}{H}=\gal{F(E[\ell])}{F}\cong\Gamma_\ell$. The vanishing of the first group is then a particular case of \cite[Proposition 7.3.1]{bro2}. Now notice that
\[ \gal{L}{H(E[\ell])} \; \longmono \; \gal{H_{z_1}}{H} \cong \Z/m_{z_1}\Z, \]
hence $\gal{L}{H(E[\ell])}$ is a cyclic group, say of order $t$ (actually, the inclusion above is an isomorphism, but we will not need this fact). Moreover, the subgroup $\gal{L}{H(E[\ell])}$ is contained in the centre of $\gal{L}{H}$, so $\gal{H(E[\ell])}{H}$ acts trivially on $\gal{L}{H(E[\ell])}$. Thus we must show that 
\[ \text{Hom}_{\Gamma_\ell}(\Z/t\Z,\F^2_\ell)=0 \]
where $\Z/t\Z$ is a trivial $\Gamma_\ell$-module. But this is easy: if $f\in\text{Hom}_{\Gamma_\ell}(\Z/t\Z,\F^2_\ell)$ then  in particular $(q-1)f(1)=0$ (recall that $\bigl(\begin{smallmatrix}q&0\\0&q\end{smallmatrix}\bigr)\in\Gamma_\ell$), whence $f(1)=0$ because $\ell\nmid(q-1)$ since $\ell$ belongs to the set $\tilde{\mathcal P}$ defined in \eqref{tilde-p-set-eq}. \end{proof}
The reader may wish to consult \cite[Proposition 7.3.1]{bro2} for more general vanishing results in Galois cohomology.
\begin{lem} \label{no-inv-sub-lem}
If $\Gamma$ is a subgroup of $GL_2(\F_\ell)$ containing $SL_2(\F_\ell)$ then $\F^2_\ell$ has no non-trivial proper $\Gamma$-invariant subspace.
\end{lem}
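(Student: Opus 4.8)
The plan is to reduce immediately to the case $\Gamma=SL_2(\F_\ell)$, since any $\Gamma$-invariant subspace is \emph{a fortiori} invariant under the subgroup $SL_2(\F_\ell)\subseteq\Gamma$. A non-trivial proper subspace $W\subset\F^2_\ell$ must then be one-dimensional, say $W=\F_\ell v$ with $v\neq0$, so the assertion is exactly the (classical) irreducibility of the standard two-dimensional representation of $SL_2(\F_\ell)$.

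First I would record that $SL_2(\F_\ell)$ acts transitively on the set $\F^2_\ell\setminus\{0\}$ of nonzero vectors: given such a $v$, complete it to a basis $\{v,w\}$ of $\F^2_\ell$; after rescaling $w$ by a suitable element of $\F^\times_\ell$ we may assume $\det(v\mid w)=1$, so that the matrix with columns $v$ and $w$ lies in $SL_2(\F_\ell)$ and sends the first standard basis vector $e_1$ to $v$. Hence the $SL_2(\F_\ell)$-orbit of $e_1$ is all of $\F^2_\ell\setminus\{0\}$.

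Then I would conclude as follows: if $W$ were an $SL_2(\F_\ell)$-invariant line, choose $0\neq v\in W$; by transitivity $SL_2(\F_\ell)\cdot v=\F^2_\ell\setminus\{0\}$, and invariance of $W$ forces $\F^2_\ell\setminus\{0\}\subseteq W$, so $W=\F^2_\ell$, contradicting properness. (One can also sidestep the transitivity remark: the transvections $\bigl(\begin{smallmatrix}1&1\\0&1\end{smallmatrix}\bigr)$ and $\bigl(\begin{smallmatrix}1&0\\1&1\end{smallmatrix}\bigr)$ of $SL_2(\F_\ell)$ have unique eigenlines $\F_\ell e_1$ and $\F_\ell e_2$ respectively, and these are distinct, so no line can be stable under all of $SL_2(\F_\ell)$.)

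I do not expect any genuine obstacle: the whole argument is a line or two of elementary linear algebra, valid for every prime $\ell$. The only point worth spelling out is why the candidates to be ruled out are precisely the lines — which is the trivial dimension count hidden in the phrase ``non-trivial proper''.
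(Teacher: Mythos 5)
Your proof is correct, and your parenthetical alternative is essentially the paper's own argument: the paper applies $\bigl(\begin{smallmatrix}1&1\\0&1\end{smallmatrix}\bigr)$ and then $\bigl(\begin{smallmatrix}0&-1\\1&0\end{smallmatrix}\bigr)$ to a hypothetical invariant line to force its generator to be zero, which is the same two-transvection idea you sketch. Your primary route via transitivity of $SL_2(\F_\ell)$ on $\F^2_\ell\setminus\{0\}$ is a slightly more conceptual packaging of the same elementary fact and works equally well.
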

\begin{proof} Let $\langle\bigl(\begin{smallmatrix}x\\y\end{smallmatrix}\bigr)\rangle$ be a proper $\Gamma$-invariant subspace of $\F^2_\ell$. Since $\bigl(\begin{smallmatrix}1&1\\0&1\end{smallmatrix}\bigr)\in\Gamma$, there exists an integer $m$ such that $0\leq m\leq\ell-1$ and
\[ \binom{x+y}{y}=\begin{pmatrix}1&1\\0&1\end{pmatrix}\cdot\binom{x}{y}=m\binom{x}{y}. \]
But then $y=0$. Finally, since $\bigl(\begin{smallmatrix}0&-1\\1&0\end{smallmatrix}\bigr)\in\Gamma$ we similarly obtain that $x=0$. \end{proof}
At this point we can state
\begin{pro} \label{gen-pro}
If $U^+$ generates $V^+$ then the $X_z$ with $z$ ranging over $\mathcal S(U)$ generate $\dsel{\ell}{E/H}$.
\end{pro}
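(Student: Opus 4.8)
The plan is to dualize the statement via local Poitou--Tate duality, reducing it to an assertion about the Selmer group, and then to manufacture the required primes with \v{C}ebotarev's density theorem along the lines of \cite{bd}. First, by construction $\Psi_z$ in \eqref{psi-z-eq} is the $\F_\ell$-linear dual of the localization map $\mathrm{res}_z\colon\sel{\ell}{E/H}\to\bigoplus_{z'\mid z}E(H_{z'})/\ell E(H_{z'})$, so $X_z=\mathrm{Im}(\Psi_z)$ is exactly the annihilator of $\ker\bigl(\mathrm{res}_z\big|_{\sel{\ell}{E/H}}\bigr)$ inside $\dsel{\ell}{E/H}$; hence the $X_z$ with $z\in\mathcal S(U)$ span $\dsel{\ell}{E/H}$ if and only if the only $s\in\sel{\ell}{E/H}$ with $\mathrm{res}_z(s)=0$ for all $z\in\mathcal S(U)$ is $s=0$. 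So I would fix such an $s$ and aim to prove $s=0$. Since $H/F$ is Galois, a fixed lift of $\tau_\infty$ acts on $\sel{\ell}{E/H}$; writing $s=s^+ + s^-$ for the eigen-decomposition and checking that $\mathrm{res}_z$ respects the corresponding eigenspace splitting of its target, it is enough to show that each $\tau_\infty$-eigencomponent $s'\in\{s^+,s^-\}$ vanishes, so suppose $s'\neq 0$.

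Next I would push $s'$ up to $L=H_{z_1}(E[\ell])$. Factoring $L/H$ through $H(E[\ell])$ and using that $\gal{L}{H(E[\ell])}$ is a cyclic central subgroup of $\gal{L}{H}$ acting trivially on $E[\ell]$, the inflation--restriction sequence together with both vanishing statements of Lemma \ref{vanishing-cohom-lem} gives $H^1(\gal{L}{H},E[\ell])=0$; since $E[\ell]\subseteq L$, restriction $H^1(H,E[\ell])\to H^1(L,E[\ell])$ is injective, so $\phi:=\mathrm{res}_{L/H}(s')\in\mathrm{Hom}(\gal{F^s}{L},E[\ell])$ is nonzero, and a routine cocycle computation shows $\phi$ is $\gal{L}{H}$-equivariant and, because $s'$ is a $\tau_\infty$-eigenvector, also suitably equivariant for conjugation by a lift of $\tau_\infty$. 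Its image is then a nonzero $\Gamma_\ell$-stable subspace of $E[\ell]$, hence all of $E[\ell]$ by Lemma \ref{no-inv-sub-lem}: $\phi$ is onto. Let $N'$ be the fixed field of $\ker\phi$, so $\gal{N'}{L}\cong E[\ell]$ via $\phi$; the $\tau_\infty$-equivariance of $\phi$ makes $N'/F$ Galois and identifies conjugation by a lift of $\tau_\infty$ on $\gal{N'}{L}$ with an involution of $E[\ell]$ having a $1$-dimensional fixed subspace (for either sign of $s'$).

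Then comes the \v{C}ebotarev step on $M':=M\cdot N'$, which is Galois over $F$. Since $[M':L]$ divides a power of the odd prime $\ell$ (using \eqref{m-l-deg-eq}), Lemma \ref{lift-lem} extends $\tau_{z_1}$ to an involution $\tau_{M'}\in\gal{M'}{F}$ restricting to $\tau_M$ on $M$. One must check that $M$ and $N'$ are linearly disjoint over $L$: otherwise $\Gamma_\ell$-irreducibility of $E[\ell]$ (Lemma \ref{no-inv-sub-lem}) forces $N'\subseteq M$, whence $\phi$ factors through $\gal{M}{L}\cong\mathrm{Hom}(T,E[\ell])$, and comparing $\tau_\infty$-eigenvalues of $\phi$ with the structure of $\mathrm{Hom}(T,E[\ell])$ (recall $T=\langle\alpha,D_{z_1}(\alpha_{z_1})\rangle$, and that the localization of a Selmer class at a special prime is constrained to one eigenspace) rules this out. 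Granting disjointness, $\gal{M'}{L}=V\times E[\ell]$. The hypothesis that $U^+$ generates $V^+$ is what now lets me choose $u\in U$ whose image controls $\mathrm{res}_z$ in the right direction, together with a suitable $e\in E[\ell]\setminus\{0\}$ lying in the relevant eigenspace, so that \v{C}ebotarev yields a prime $z$ with $\mathrm{Frob}_z(M'/F)$ in the conjugacy class of $\tau_{M'}\cdot(u,e)$. Such a $z$ lies in $\mathcal S(U)$ (in particular it is special), while a short Frobenius computation --- the decomposition group of $z$ in $M'/F$ being generated by $\tau_{M'}(u,e)$, so that $\mathrm{Frob}_\lambda(N'/L)=\bigl(\tau_{M'}(u,e)\bigr)^2$ corresponds under $\phi$ to twice the fixed part of $e$ --- shows $\mathrm{res}_z(s')\neq 0$, contradicting the choice of $s$. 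Hence $s'=0$, and therefore $s=0$.

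The step I expect to be the main obstacle is making the two \v{C}ebotarev requirements compatible: $z$ must be special of prescribed type $[\tau_M u]$ with $u\in U$ (a condition read off $\gal{M}{F}$) and $s'$ must localize nontrivially at $z$ (a condition read off $\gal{N'}{L}\cong E[\ell]$). This rests on the linear disjointness of $M$ and $N'$ over $L$ and, more delicately, on a careful matching of the $\tau_\infty$-eigenspace decompositions throughout --- it is exactly here that one needs to know that $U^+$ spanning $V^+$ suffices (rather than $U$ spanning all of $V$), and this bookkeeping is the function field counterpart of the argument in \cite{bd}.
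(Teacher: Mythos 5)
Your overall strategy matches the paper's: dualize via local Poitou--Tate pairing so that generation of $\dsel{\ell}{E/H}$ by the $X_z$ is equivalent to the implication $\bigl(\mathrm{res}_z(s)=0$ for all $z\in\mathcal S(U)\bigr)\Rightarrow s=0$; reduce to a $\tau_\infty$-eigencomponent; push $s$ up to $L$ via Lemma~\ref{vanishing-cohom-lem}; apply \v{C}ebotarev to the big field; and finish with the irreducibility statement of Lemma~\ref{no-inv-sub-lem}. The main difference is in the organization of the \v{C}ebotarev step, and that is where your write-up has a genuine gap.

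You insist on arranging that $M$ and $N'$ be linearly disjoint over $L$ so that you can prescribe, independently, a component $u\in U$ and a nonzero $e\in E[\ell]$ for $\mathrm{Frob}_z$, and you try to rule out $N'\subseteq M$ by ``comparing $\tau_\infty$-eigenvalues with the structure of $\mathrm{Hom}(T,E[\ell])$.'' This justification does not go through. In fact $N'\subseteq M$ is perfectly possible in general: if $s$ is in the span of $\delta(\alpha)$ and $\delta(D_{z_1}(\alpha_{z_1}))$, then $\phi$ factors through $\gal{M}{L}$ by construction, so nothing purely about eigenvalues or the shape of $\mathrm{Hom}(T,E[\ell])$ can forbid this. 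What actually excludes such $s$ from the set under consideration is the hypothesis itself (those classes localize nontrivially at some $z\in\mathcal S(U)$, precisely because $U$ was designed in \eqref{U-1-eq}--\eqref{U-2-eq} to detect them), which is essentially the conclusion you are trying to reach --- so the attempted a priori dichotomy is circular. The paper sidesteps the issue entirely: it works with the minimal Galois $\tilde M\supseteq M$ through which $s$ factors (whether or not $\tilde M=M$), picks for each $x\in\gal{\tilde M}{F}$ with $x|_M\in U$ a prime $z$ with $\mathrm{Frob}_z(\tilde M/F)=[\tau_{\tilde M}x]$, deduces $s_0(x^+)=0$, and --- because $U^+$ generates $V^+$ and $\ell$ is odd so the $\pm$-decomposition is exact --- concludes that $s_0$ vanishes on all of $\bigl(\gal{\tilde M}{L}\bigr)^+$; hence the image of $s_0$ sits in a single $\tau$-eigenspace of $E[\ell]$, which is a proper $\Gamma_\ell$-invariant subspace and therefore trivial by Lemma~\ref{no-inv-sub-lem}. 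This is a direct argument, it requires no linear disjointness, and it handles the case $N'\subseteq M$ automatically. I would recommend replacing the proof-by-contradiction-plus-disjointness scaffolding with this direct deduction, which is also closer to \cite[Proposition~4.1]{bd}.
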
 
\begin{proof} Now that all the necessary ingredients have been collected, our proof follows that of \cite[Proposition 4.1]{bd} closely. For notational convenience, let $\ddsel{\ell}{E/H}$ denote the dual of $\dsel{\ell}{E/H}$, so that there is a canonical identification $\ddsel{\ell}{E/H}=\sel{\ell}{E/H}$. Thus we can equivalently prove that if $s\in\sel{\ell}{E/H}$ is such that $\Psi_z(\xi)(s)=0$ for all $z\in\mathcal S(U)$ and all $\xi\in\bigoplus_{z'|z}H^1(H_{z'},E)[\ell]$ then $s=0$. Now note that, up to the identification induced by the Poitou-Tate pairing, the map
\[ \Psi_z:\bigoplus_{z'|z}H^1(H_{z'},E)[\ell]=\Big(\bigoplus_{z'|z}E(H_{z'})/\ell E(H_{z'})\Big)^\ast\xrightarrow{\text{res}^\ast_z}\dsel{\ell}{E/H} \]
is given by
\[ \Psi_z(\xi)(t)=\langle\xi,\text{res}_z(t)\rangle_{z'}. \]
But $\langle\xi,\text{res}_z(s)\rangle_{z'}=0$ for all $\xi\in\bigoplus_{z'|z}H^1(H_{z'},E)[\ell]$ if and only if $\text{res}_z(s)=0$. Therefore we can rewrite the claim of the proposition as follows:
\begin{equation*}
\text{if $s\in\sel{\ell}{E/H}$ is such that $\text{res}_z(s)=0$ for all $z\in\mathcal S(U)$ then $s=0$.} \tag{$\ast$}
\end{equation*}
Thus we are reduced to proving $(\ast)$. Without loss of generality we can assume that $s$ lies in an eigenspace for $\tau_\infty|_H\in\gal{H}{F}$. The restriction map
\begin{equation} \label{res-cohom-eq}
H^1(H,E[\ell])\longrightarrow H^1(L,E[\ell])^{\text{Gal}(L/H)}
\end{equation}
can be written as a composition
\[ H^1(H,E[\ell])\overset{\eta}{\longrightarrow}H^1(H(E[\ell]),E[\ell])^{\text{Gal}(H(E[\ell])/H)}\overset{\beta}{\longrightarrow}H^1(L,E[\ell])^{\text{Gal}(H(E[\ell])/H)}. \]
By the inflation-restriction sequence we have:
\[ \ker(\eta)=H^1\bigl(\gal{H(E[\ell])}{H},E[\ell]\bigr) \]
and
\[ \ker(\beta)=\text{Hom}_{\text{Gal}(H(E[\ell])/H)}\bigl(\gal{L}{H(E[\ell])},E[\ell]\bigr), \]
and both these groups are trivial by Lemma \ref{vanishing-cohom-lem}. This shows that the restriction \eqref{res-cohom-eq} is injective; henceforth we identify $s$ with its image by this map in
\[ H^1(L,E[\ell])^{\text{Gal}(L/H)}\subset\text{Hom}_{\text{Gal}(H(E[\ell])/H)}\bigl(\gal{\bar{M}}{L},E[\ell]\bigr) \]
where $\bar{M}$ is the maximal abelian extension of $L$ whose Galois group is of exponent $\ell$. Choose a minimal Galois extension $\tilde{M}$ of $F$ containing $M$ such that $s$ factors through $\gal{\tilde{M}}{L}$. In other words, we require that in the commutative triangle
\[ \xymatrix@R=25pt@C=0pt{\gal{\bar{M}}{L}\ar[rr]^-s\ar[dr] & & E[\ell]\\
             & \gal{\tilde{M}}{L} \ar[ur]_-{s_0} &} \]
the map $s_0$ is an injection. This implies that the finite extension $\tilde{M}/M$ has odd degree, and so by Lemma \ref{lift-lem} the involution $\tau_M$ in $\gal{M}{F}$ extends to an involution $\tau_{\tilde{M}}$ in $\gal{\tilde{M}}{F}$, which we fix. Let now $x\in\gal{\tilde{M}}{F}$ be such that $x|_M\in U$. By \v{C}ebotarev's density theorem there exists $z\in\mathcal S(U)$ with $\text{Frob}_z(\tilde{M}/F)=[\tau_{\tilde{M}}x]$. But we are assuming that $\text{res}_z(s)=0$ for all $z\in\mathcal S(U)$, hence 
\[ s_0\bigl(\text{Frob}_{\tilde{z}}(\tilde{M}/L)\bigr)=0 \]
for all primes $\tilde{z}$ of $\tilde{M}$ above $z$. On the other hand:
\[ \text{Frob}_{\tilde{z}}(\tilde{M}/L)=(\tau_{\tilde{M}}x)^2=x^{\tau_{\tilde{M}}}x=(x^+)^2 \]
for some $\tilde{z}$ above $z$, hence $s_0(x^+)=0$. Since $U^+$ generates $V^+$ by assumption, this yields the vanishing of $s_0$ on $\gal{\tilde{M}}{L}^+$. It follows that the image of $s$, which equals the image of $s_0$, is contained in an eigenspace of $E[\ell]$ for the action of $\tau$, and in particular it is a proper $\gal{L}{H_{z_1}}$-submodule of $E[\ell]$. But $\gal{L}{H_{z_1}}\cong\Gamma_\ell$ as a consequence of conditions $1$ and $2$ of \S \ref{aux-subsec}, hence $s=0$ by Lemma \ref{no-inv-sub-lem}. \end{proof}

\subsection{Global Poitou-Tate duality: relations}

Choose the prime $z_1\in\mathcal S$ of \S \ref{local-duality-subsec} so that $\text{res}_{z_1}\alpha_\chi\not=0$; this implies that
\begin{equation} \label{res-nonzero-eq}
\text{res}_{z_1}\alpha_{\bar{\chi}}\not=0
\end{equation}
by Proposition \ref{tau-heegner-pro}. To show the existence of such a $z_1$, apply \v{C}ebotarev's density theorem to the extension $H(E[\ell])(\alpha_{\chi}/\ell)/F$, using the assumption (made at the outset) that $\alpha_\chi\not=0$ in $E(H)/\ell E(H)$. By Corollary \ref{isom-loc-cor}, condition \eqref{res-nonzero-eq} guarantees that
\begin{equation} \label{res-nonzero2-eq}
\text{res}_{z_1}e_{\bar{\chi}}\xi(z_1)\not=0.
\end{equation} 
Similarly to what done in \S \ref{local-duality-subsec}, define the fields
\[ M^{\bar{\chi}}_0:=L(\alpha_{\bar{\chi}}/\ell), \quad M^{\bar{\chi}}_1:=L\bigl(e_{\bar{\chi}}D_{z_1}(\alpha_{z_1})/\ell\bigr), \quad M^{\bar{\chi}}:=M^{\bar{\chi}}_0\cdot M^{\bar{\chi}}_1, \]
and write $V^{\bar{\chi}}_0$, $V^{\bar{\chi}}_1$ and $V^{\bar{\chi}}$ for the respective Galois groups over $L$. Note that $V^{\bar{\chi}}_i\cong E[\ell]$ for $i=0,1$ (cf. \cite[Corollary 7.18.10]{bro2}).
\begin{lem} \label{lin-disj-twist-lem}
The extensions $M^{\bar{\chi}}_0$ and $M^{\bar{\chi}}_1$ are linearly disjoint over $L$.
\end{lem}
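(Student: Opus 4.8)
The plan is to deduce the linear disjointness of $M^{\bar\chi}_0$ and $M^{\bar\chi}_1$ over $L$ from the fact that each $V^{\bar\chi}_i$ is, as a $\gal{L}{H_{z_1}}$-module, isomorphic to $E[\ell]$, which is irreducible (in fact has no nontrivial proper submodule) by Lemma \ref{no-inv-sub-lem}. Indeed, since $\gal{L}{H_{z_1}}\cong\Gamma_\ell\supset SL_2(\F_\ell)$ by conditions $1$ and $2$ of \S \ref{aux-subsec}, Lemma \ref{no-inv-sub-lem} applies. Put $N:=M^{\bar\chi}_0\cap M^{\bar\chi}_1$, an intermediate field of both $M^{\bar\chi}_i/L$; then $\gal{M^{\bar\chi}_i}{N}$ is a $\gal{L}{H_{z_1}}$-submodule of $V^{\bar\chi}_i\cong E[\ell]$ for $i=0,1$. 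By Galois theory, linear disjointness of $M^{\bar\chi}_0$ and $M^{\bar\chi}_1$ over $L$ is equivalent to $N=L$, i.e.\ to $\gal{M^{\bar\chi}_i}{N}=V^{\bar\chi}_i$ for one (hence, by a symmetry/counting argument, both) of $i=0,1$.

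The key step is therefore to rule out $N=M^{\bar\chi}_i$, i.e.\ to rule out that one of the two extensions is contained in the other; by the irreducibility just invoked, the only alternative to $N=L$ is precisely such a containment (one of the submodules $\gal{M^{\bar\chi}_i}{N}$ would have to be trivial). Suppose, for contradiction, that $M^{\bar\chi}_1\subset M^{\bar\chi}_0$, so that adjoining an $\ell$th root of $e_{\bar\chi}D_{z_1}(\alpha_{z_1})$ to $L$ is subsumed in adjoining an $\ell$th root of $\alpha_{\bar\chi}$. This forces a relation, in $E(\bar F)/\ell E(\bar F)$ up to the $\gal{L}{H_{z_1}}$-action on the relevant Kummer groups, between the two classes $\alpha_{\bar\chi}$ and $e_{\bar\chi}D_{z_1}(\alpha_{z_1})$. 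Here is where the Euler system input enters: by part $1$ of Proposition \ref{euler-pro} together with Lemma \ref{a-cong-lem} we have $\text{Tr}_{z_1}(\alpha_{z_1})=a_{z_1}\alpha\equiv0\pmod\ell$, while $(\sigma_{z_1}-1)D_{z_1}=-\text{Tr}_{z_1}$, so the class $[D_{z_1}(\alpha_{z_1})]$ is genuinely new information not already visible in $\alpha$ — and the same remains true after applying $e_{\bar\chi}$, thanks to \eqref{res-nonzero2-eq}, which says $\text{res}_{z_1}e_{\bar\chi}\xi(z_1)\neq0$. One then compares local behaviour at a special prime $z\in\mathcal S$, $z\neq z_1$: via Corollary \ref{isom-loc-cor}, $\text{res}_z$ of the Kummer class of $e_{\bar\chi}D_{z_1}(\alpha_{z_1})$ is controlled by $\text{res}_z e_{\bar\chi}D_{z_1}(\alpha_{z_1})$ in $\bigoplus_{z'|z}\tilde E(\F_{z'})/\ell\tilde E(\F_{z'})$, and a \v{C}ebotarev argument (applied to $M^{\bar\chi}/F$, choosing $\text{Frob}_z$ to act nontrivially on $V^{\bar\chi}_1$ but trivially on $V^{\bar\chi}_0$, which is possible exactly because $V^{\bar\chi}_1$ is not contained in $V^{\bar\chi}_0$ unless they already coincide) produces a prime witnessing that $M^{\bar\chi}_1\not\subset M^{\bar\chi}_0$, the desired contradiction. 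The symmetric containment $M^{\bar\chi}_0\subset M^{\bar\chi}_1$ is excluded the same way with the roles of $\alpha_{\bar\chi}$ and $e_{\bar\chi}D_{z_1}(\alpha_{z_1})$ interchanged, again using \eqref{res-nonzero-eq} and \eqref{res-nonzero2-eq} to see that neither class is a scalar multiple of the other.

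The main obstacle I expect is the bookkeeping that translates ``$V^{\bar\chi}_i\cong E[\ell]$ is irreducible'' into a clean dichotomy $N=L$ or $M^{\bar\chi}_0=M^{\bar\chi}_1$, and then showing the latter cannot happen: one must check that the Kummer-theoretic isomorphisms $V^{\bar\chi}_i\cong\text{Hom}(\langle\text{point}_i\rangle,E[\ell])$ are compatible with the $\gal{L}{H_{z_1}}$-action in the way needed, and that \eqref{res-nonzero-eq}–\eqref{res-nonzero2-eq} really do certify that the two generating points give \emph{distinct} lines, so that an identification $M^{\bar\chi}_0=M^{\bar\chi}_1$ is impossible. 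This is essentially the function-field transcription of the argument in \cite[\S 4]{bd} (see also the discussion around \cite[Corollary 7.18.10]{bro2}), and no new ideas beyond those already assembled in this section should be required.
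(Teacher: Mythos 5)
Your reduction of the lemma to the statement that $\alpha_{\bar\chi}$ and $e_{\bar\chi}D_{z_1}(\alpha_{z_1})$ are $\F_\ell$-linearly independent in $E(H_{z_1})/\ell E(H_{z_1})$ is exactly the paper's reduction, and you are right that the crucial input is \eqref{res-nonzero2-eq}. But the final step of your argument is circular. You assume for contradiction that $M^{\bar\chi}_1\subset M^{\bar\chi}_0$ and then invoke \v{C}ebotarev to find a $\frob_z$ acting nontrivially on $V^{\bar\chi}_1$ but trivially on $V^{\bar\chi}_0$; yet under your contradiction hypothesis $M^{\bar\chi}=M^{\bar\chi}_0$, so $V^{\bar\chi}_1$ is a \emph{quotient} of $V^{\bar\chi}_0$ and no such Frobenius exists. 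The parenthetical ``which is possible exactly because $V^{\bar\chi}_1$ is not contained in $V^{\bar\chi}_0$'' is asserting, rather than proving, the conclusion you want; it does not manufacture a witness prime, and no contradiction has been derived.

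The paper replaces this \v{C}ebotarev step with a short algebraic argument. Suppose $e_{\bar\chi}D_{z_1}(\alpha_{z_1})=u\cdot\alpha_{\bar\chi}$ for some $u\in\F_\ell^\times$, and consider the $\bar\chi$-component of the $G_{z_1}$-invariants sequence \eqref{G-inv-eq}:
\[ 0\longrightarrow\bigl(E(H)/\ell E(H)\bigr)^{\bar{\chi}}\longrightarrow\bigl(E(H_{z_1})/\ell E(H_{z_1})\bigr)^{G_{z_1},\bar{\chi}}\overset{\delta'}{\longrightarrow}H^1(G_{z_1},E)^{\bar{\chi}}[\ell]\longrightarrow0. \]
Since $\alpha_{\bar\chi}$ lives in $E(H)/\ell E(H)$ it lies in the left-hand term, so $\delta'(\alpha_{\bar\chi})=0$; on the other hand $\delta'(e_{\bar\chi}D_{z_1}(\alpha_{z_1}))=e_{\bar\chi}\xi(z_1)$ by Definition \ref{kol-class-dfn}, and this is nonzero by \eqref{res-nonzero2-eq}. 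This immediately contradicts the assumed dependence relation. So the missing idea in your write-up is precisely this: distinguish the two points by their images under the coboundary $\delta'$, using that one lies in the image of $E(H)$ while the other produces the nonzero Kolyvagin class $e_{\bar\chi}\xi(z_1)$. Once you replace the \v{C}ebotarev paragraph with this observation, your proof matches the paper's.
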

\begin{proof} The coboundary map
\[ E(H_{z_1})/\ell E(H_{z_1})\longrightarrow\text{Hom}_{\text{Gal}(L/H_{z_1})}(V,E[\ell]) \]
is injective, and since $\gal{L}{H_{z_1}}\cong\Gamma_\ell$ it can be shown that linearly independent homomorphisms cut out linearly disjoint extensions over $L$. This shows that linearly independent points in $E(H_{z_1})/\ell E(H_{z_1})$ give rise to linearly disjoint extensions over $L$; on the other hand, the extension of $L$ cut out by the image of $[x]$ with $x\in E(H_{z_1})$ is precisely $L(x/\ell)$, so to prove the lemma it suffices to show that $\alpha_{\bar{\chi}}$ and $e_{\bar{\chi}}D_{z_1}(\alpha_{z_1})$ are linearly independent over $\F_\ell$. If this were not, we would have a relation of the form
\[ e_{\bar{\chi}}D_{z_1}(\alpha_{z_1})=u\cdot\alpha_{\bar{\chi}}, \quad u\in\F^\times_\ell \]
in $E(H_{z_1})/\ell E(H_{z_1})$. In the short exact sequence
\[ 0\longrightarrow\bigl(E(H)/\ell E(H)\bigr)^{\bar{\chi}}\longrightarrow\bigl(E(H_{z_1})/\ell E(H_{z_1})\bigr)^{G_{z_1},\bar{\chi}}\overset{\delta'}{\longrightarrow}H^1(G_{z_1},E)^{\bar{\chi}}[\ell]\longrightarrow0 \]
we have:
\[ \delta'(\alpha_{\bar{\chi}})=0, \qquad \delta'(e_{\bar{\chi}}D_{z_1}(\alpha_{z_1}))=e_{\bar{\chi}}\xi(z_1). \]
But $e_{\bar{\chi}}\xi(z_1)\not=0$ because $\text{res}_{z_1}e_{\bar{\chi}}\xi(z_1)\not=0$ by \eqref{res-nonzero2-eq}, and we are done. \end{proof}
By Lemma \ref{lin-disj-twist-lem}:
\[ V^{\bar{\chi}}=V^{\bar{\chi}}_0\times V^{\bar{\chi}}_1\cong E[\ell]\times E[\ell]. \] 
Thanks to Proposition \ref{tau-heegner-pro}, and using the identity $\tau_{z_1}D_{z_1}=-D_{z_1}\tau_{z_1}$, we can describe the action of $\tau_{z_1}$ on $V^{\bar{\chi}}$. As in \cite[\S 5]{bd}, we need to distinguish between two cases:\\

\emph{Case 1:} $\chi=\bar{\chi}$. A direct computation shows that $\tau_{z_1}$ acts on $V^\chi\cong E[\ell]\times E[\ell]$ by the formula
\[ \tau_{z_1}(x_0,x_1)\tau_{z_1}=\bigl(\epsilon\chi(\sigma_0)\tau_{z_1}x_0,-\epsilon\chi(\sigma_0)\tau_{z_1}x_1\bigr); \]\\

\emph{Case 2:} $\chi\not=\bar{\chi}$. Here $\tau_{z_1}$ stabilizes neither $V^\chi$ nor $V^{\bar{\chi}}$, but instead interchanges these two components. More precisely, $\tau_{z_1}$ acts on 
\[ V^\chi\times V^{\bar{\chi}}=V^\chi_0\times V^\chi_1\times V^{\bar{\chi}}_0\times V^{\bar{\chi}}_1\cong E[\ell]^4 \]
by the formula
\[ \tau_{z_1}(x_0,x_1,y_0,y_1)\tau_{z_1}=\bigl(\epsilon\bar{\chi}(\sigma_0)\tau_{z_1}y_0,-\epsilon\bar{\chi}(\sigma_0)\tau_{z_1}y_1,\epsilon\chi(\sigma_0)\tau_{z_1}x_0,-\epsilon\chi(\sigma_0)\tau_{z_1}x_1\bigr). \]
Now we define a subset $U$ of $V$ as follows:\\

\emph{Case 1:}
\begin{equation} \label{U-1-eq}
U^\chi:=\bigl\{(x_0,x_1)\mid\text{$\epsilon\chi(\sigma_0)\tau_{z_1}x_0+x_0$ and $-\epsilon\chi(\sigma_0)\tau_{z_1}x_1+x_1$ generate $E[\ell]$}\bigr\};
\end{equation}

\emph{Case 2:}
\begin{equation} \label{U-2-eq}
U^\chi\oplus U^{\bar{\chi}}:=\bigl\{(x_0,x_1,y_0,y_1)\mid\text{$\epsilon\chi(\sigma_0)\tau_{z_1}x_0+y_0$ and $-\epsilon\bar{\chi}(\sigma_0)\tau_{z_1}x_1+y_1$ generate $E[\ell]$}\bigr\}.
\end{equation}
In both cases $U$ satisfies the condition required in Proposition \ref{gen-pro}. Let now $z\in\mathcal S(U)$.
\begin{lem} \label{local-2-dim-lem}
The local cohomology classes $\mathrm{res}_ze_{\bar{\chi}}\xi(z)$ and $\mathrm{res}_ze_{\bar{\chi}}\xi(zz_1)$ form a basis of the $\F_\ell$-vector space $\bigl(\bigoplus_{z'|z}H^1(H_{z'},E)[\ell]\bigr)^{\bar{\chi}}$.
\end{lem}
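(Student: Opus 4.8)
The plan is to read the whole statement inside $\bigoplus_{z'|z}\tilde E(\F_{z'})/\ell\tilde E(\F_{z'})$ by means of Corollary \ref{isom-loc-cor}. That isomorphism is $G$-equivariant, hence commutes with $e_{\bar\chi}$, and it carries $\mathrm{res}_z\xi(z)$ to $\mathrm{res}_z\alpha$ (take $\mathfrak d=z$, so $\mathfrak d-z=0$ and $D_0(\alpha_0)=\alpha$) and $\mathrm{res}_z\xi(zz_1)$ to $\mathrm{res}_z D_{z_1}(\alpha_{z_1})$ (take $\mathfrak d=zz_1$). Thus the lemma is equivalent to the statement that $\mathrm{res}_z\alpha_{\bar\chi}$ and $\mathrm{res}_z e_{\bar\chi}D_{z_1}(\alpha_{z_1})$ form a basis of $\bigl(\bigoplus_{z'|z}\tilde E(\F_{z'})/\ell\tilde E(\F_{z'})\bigr)^{\bar\chi}$, and I would begin by computing this $\bar\chi$-eigenspace.

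The key geometric point is that $z$, being a special prime, is inert in $K$, unramified in $H$, and in fact splits completely in $H/K$. The latter follows from \eqref{gen-dihed-eq}: since $\gal HF$ is generalized dihedral, every element outside $G=\gal HK$ is an involution; a Frobenius at $z$ in $\gal HF$ restricts to the non-trivial element of $\gal KF$, so it lies outside $G$, whence its square --- a Frobenius at the prime $w$ of $K$ above $z$ --- is trivial in $G$. Consequently there are exactly $|G|$ primes $z'\mid z$ of $H$, permuted simply transitively by $G$, each with residue field $\F_{z'}=\F_w$. Moreover $\mathrm{Frob}_z$ acts on $E[\ell]$ as $\bigl(\begin{smallmatrix}1&0\\0&-1\end{smallmatrix}\bigr)$, so $\mathrm{Frob}_w=\mathrm{Frob}_z^2$ acts trivially and $E[\ell]=\tilde E[\ell](\F_w)$; hence $\tilde E(\F_{z'})/\ell\tilde E(\F_{z'})\cong(\Z/\ell)^2$ and $\bigoplus_{z'|z}\tilde E(\F_{z'})/\ell\tilde E(\F_{z'})\cong\F_\ell[G]^{\oplus 2}$ as $\F_\ell[G]$-modules. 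Its $\bar\chi$-component is therefore $2$-dimensional over $\F_\ell$, so it remains to prove that the two classes in question are linearly independent.

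For this I would run the argument of \cite[\S 5]{bd}. Lemma \ref{lin-disj-twist-lem} gives $V^{\bar\chi}=V^{\bar\chi}_0\times V^{\bar\chi}_1$, and each factor is identified with $E[\ell]$ by Kummer theory; the standard translation between the reduction at $z$ of a rational point and the behaviour of Frobenius at $z$ in the corresponding Kummer extension (cf.\ \cite[Lemma 3.2 and Corollary 3.5]{bd}, \cite[\S\S 4--6]{g2}) then identifies $\mathrm{res}_z\alpha_{\bar\chi}$ and $\mathrm{res}_z e_{\bar\chi}D_{z_1}(\alpha_{z_1})$, up to nonzero scalars, with the images in $V^{\bar\chi}_0$ and $V^{\bar\chi}_1$ of $\mathrm{Frob}_w|_{M^{\bar\chi}}\in V^{\bar\chi}$ (recall $M^{\bar\chi}\subseteq M$, with $M^{\bar\chi}_0=L(\alpha_{\bar\chi}/\ell)$ and $M^{\bar\chi}_1=L(e_{\bar\chi}D_{z_1}(\alpha_{z_1})/\ell)$). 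Now the choice $z\in\mathcal S(U)$ means $\mathrm{Frob}_z(M/F)=[\tau_M u]$ for some $u\in U$, and restriction gives $\mathrm{Frob}_w|_{M^{\bar\chi}}=(\tau_M u)^2|_{M^{\bar\chi}}=(u^{\tau_M}u)|_{M^{\bar\chi}}$; feeding in the explicit description of the conjugation action of $\tau_{z_1}$ on $V^{\bar\chi}$ (resp.\ on $V^\chi\times V^{\bar\chi}$) recorded above --- which rests on Proposition \ref{tau-heegner-pro} and the identity $\tau_{z_1}D_{z_1}=-D_{z_1}\tau_{z_1}$ --- one finds that the two images so obtained are, up to nonzero scalars, precisely the two vectors required to generate $E[\ell]$ in the defining condition \eqref{U-1-eq} (if $\chi=\bar\chi$) or \eqref{U-2-eq} (if $\chi\neq\bar\chi$) for membership in $U$. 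They are therefore linearly independent, and together with the dimension count this finishes the proof.

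I expect the last step to be the main obstacle: making the Kummer-theoretic dictionary between local reductions and Frobenius completely explicit, and --- above all in the case $\chi\neq\bar\chi$, where $\tau_{z_1}$ interchanges the $\chi$- and $\bar\chi$-components of $V$ --- verifying that the scalars $\epsilon$, $\chi(\sigma_0)$, $\bar\chi(\sigma_0)$ and the twist by $\tau_{z_1}$ fit together so that the pair one extracts from $\mathrm{Frob}_w|_{M^{\bar\chi}}$ is genuinely the pair constrained by the definition of $U$. Everything else is either formal or a direct transcription of the number-field arguments of Bertolini and Darmon.
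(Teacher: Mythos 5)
Your proof is correct and takes the same route as the paper's (terse) sketch: the $2$-dimensionality of the $\bar\chi$-eigenspace, the translation via Corollary \ref{isom-loc-cor} into reductions of $\alpha_{\bar\chi}$ and $e_{\bar\chi}D_{z_1}(\alpha_{z_1})$, and linear independence from the Kummer-theoretic identification of local reductions with the Frobenius at $z$ together with the defining conditions \eqref{U-1-eq}, \eqref{U-2-eq} for $U$, exactly as in \cite[Lemma 5.3]{bd}. You merely supply more detail (notably the argument that $z$ splits completely in $H/K$ and the explicit $\F_\ell[G]$-module identification) than the paper chooses to write out.
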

\begin{proof}[Sketch of proof] Note that
\[ \dim_{\mathbb F_\ell}\Big(\bigoplus_{z'|z}H^1(H_{z'},E)[\ell]\Big)^{\bar{\chi}}=2 \]
and that the isomorphism of Corollary \ref{isom-loc-cor} sends $\text{res}_ze_{\bar{\chi}}\xi(z)$ and $\text{res}_ze_{\bar{\chi}}\xi(zz_1)$ to $\alpha_{\bar{\chi}}$ and $e_{\bar{\chi}}D_{z_1}(\alpha_{z_1})$ respectively, then keep in mind \eqref{U-1-eq} and \eqref{U-2-eq} and proceed as in \cite[Lemma 5.3]{bd}. \end{proof}
In the final part of our arguments an important role will be played by
\begin{pro} \label{global-duality-pro}
If $s\in\sel{\ell}{E/H}$ and $\gamma\in H^1(H,E)[\ell]$ then
\[ \sum_v\langle\mathrm{res}_v(s),\mathrm{res}_v(\gamma)\rangle_v=0, \]
the sum being taken over all places $v$ of $H$.
\end{pro}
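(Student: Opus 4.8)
\emph{Proof proposal.} This is the global reciprocity law underlying Poitou--Tate duality, and the plan is to recover it from the vanishing of the total invariant on the Brauer group of the global function field $H$. First I would recall that the local pairing $\langle\,,\rangle_v$ on $E(H_v)/\ell E(H_v)\times H^1(H_v,E)[\ell]$ invoked throughout \S\ref{local-duality-subsec} is the one induced by Tate local duality: starting from the cup product $H^1(H_v,E[\ell])\times H^1(H_v,E[\ell])\to H^2(H_v,\boldsymbol\mu_\ell)$ composed with the Weil pairing $E[\ell]\otimes E[\ell]\to\boldsymbol\mu_\ell$ and with $\mathrm{inv}_v\colon H^2(H_v,\boldsymbol\mu_\ell)\xrightarrow{\sim}\F_\ell$, one uses the Kummer embedding $E(H_v)/\ell E(H_v)\hookrightarrow H^1(H_v,E[\ell])$ and the identification $H^1(H_v,E[\ell])\big/\bigl(E(H_v)/\ell E(H_v)\bigr)\cong H^1(H_v,E)[\ell]$. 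Concretely, for $a\in E(H_v)/\ell E(H_v)$ and $b\in H^1(H_v,E)[\ell]$ one picks any local lift $\tilde b\in H^1(H_v,E[\ell])$ of $b$ and sets $\langle a,b\rangle_v=\mathrm{inv}_v(a\cup\tilde b)$; this is independent of the lift precisely because $E(H_v)/\ell E(H_v)$ is isotropic for the cup-product pairing, a fact already recorded in \S\ref{local-duality-subsec}.

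Next I would produce a single global class whose localizations feed all these local pairings at once. Since $\gamma\in H^1(H,E)[\ell]$ and the Kummer sequence $0\to E[\ell]\to E\xrightarrow{\ell}E\to0$ gives a surjection $H^1(H,E[\ell])\twoheadrightarrow H^1(H,E)[\ell]$, I can fix a \emph{global} lift $\tilde\gamma\in H^1(H,E[\ell])$ of $\gamma$. Since $s\in\sel{\ell}{E/H}\subset H^1(H,E[\ell])$ as well, I can form the global cup product $s\cup\tilde\gamma\in H^2(H,\boldsymbol\mu_\ell)=\mathrm{Br}(H)[\ell]$ (again via the Weil pairing), and by functoriality of cup products $\mathrm{res}_v(s\cup\tilde\gamma)=\mathrm{res}_v(s)\cup\mathrm{res}_v(\tilde\gamma)$ for every place $v$ of $H$; only finitely many of these are non-zero, since for almost all $v$ the curve $E$ has good reduction, $\mathrm{res}_v(\gamma)=0$, and the local pairing vanishes.

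The conclusion is then immediate: $H$ being a global function field, class field theory furnishes the exact sequence $0\to\mathrm{Br}(H)\to\bigoplus_v\mathrm{Br}(H_v)\xrightarrow{\sum_v\mathrm{inv}_v}\Q/\Z\to0$, so $\sum_v\mathrm{inv}_v\bigl(\mathrm{res}_v(s)\cup\mathrm{res}_v(\tilde\gamma)\bigr)=0$. For each $v$ the class $\mathrm{res}_v(s)$ lies in $E(H_v)/\ell E(H_v)$ because $s$ is Selmer, and $\mathrm{res}_v(\tilde\gamma)$ is a local lift of $\mathrm{res}_v(\gamma)$, so by the description in the first paragraph $\mathrm{inv}_v\bigl(\mathrm{res}_v(s)\cup\mathrm{res}_v(\tilde\gamma)\bigr)=\langle\mathrm{res}_v(s),\mathrm{res}_v(\gamma)\rangle_v$; summing over $v$ gives the asserted identity. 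Alternatively one could quote this directly as the self-duality (maximal isotropy) of the image of $H^1(H,E[\ell])$ inside the restricted sum $\bigoplus_v H^1(H_v,E[\ell])$ coming from the Poitou--Tate sequence, and then descend the Tate pairing along the Kummer quotient in the second variable.

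As for the main obstacle: there is essentially no deep difficulty, since this is the standard global duality statement; the step that requires genuine care is the compatibility asserted in the first and last paragraphs, namely that the abstractly-defined local pairing $\langle\,,\rangle_v$ used in \S\ref{local-duality-subsec} really is computed by ``lift locally, cup with the Weil pairing, apply $\mathrm{inv}_v$''. Once that is pinned down, the isotropy of $E(H_v)/\ell E(H_v)$ (so that no canonical choice of local lift is needed) together with global reciprocity finishes the argument, and no canonical \emph{global} lift of $\gamma$ is needed either.
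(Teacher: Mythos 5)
Your argument is correct and is precisely the route the paper indicates: the paper's proof simply states that the claim follows ``by combining the definition of the local Poitou--Tate duality ... with the global reciprocity law for elements in the Brauer group of $H$,'' citing Milne's Appendix~A, and you have supplied exactly these details (global Kummer lift of $\gamma$, global cup product, vanishing of the sum of local invariants, and the local identification via isotropy of $E(H_v)/\ell E(H_v)$).
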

\begin{proof} The statement is obtained by combining the definition of the local Poitou-Tate duality recalled in \S \ref{local-duality-subsec} with the global reciprocity law for elements in the Brauer group of $H$; for details, see \cite[Ch. I, Appendix A]{m}, especially \cite[Ch. I, Theorem A.7]{m}. \end{proof}
The following two propositions represent the crucial step towards the proof of our main results.
\begin{pro} \label{one-dim-pro}
$\dim_{\mathbb F_\ell}X^{\bar{\chi}}_z=1$ for all $z\in\mathcal S(U)$.
\end{pro}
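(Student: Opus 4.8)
The plan is to pin $\dim_{\mathbb F_\ell}X^{\bar\chi}_z$ down to exactly $1$ by a two-sided estimate. The starting observation is that, up to the local Poitou--Tate identification, the map $\Psi_z$ of \eqref{psi-z-eq} is the $\F_\ell$-transpose of the $G$-equivariant localization $\mathrm{res}_z\colon\sel{\ell}{E/H}\to\bigoplus_{z'|z}E(H_{z'})/\ell E(H_{z'})$, so that $X_z$ is $\F_\ell$-dual to $\mathrm{res}_z\bigl(\sel{\ell}{E/H}\bigr)$; since the local pairing is $G$-invariant and perfect, the $\chi$- and $\bar\chi$-eigenspaces pair perfectly, hence $X^{\bar\chi}_z$ is dual to $\bigl(\mathrm{res}_z\sel{\ell}{E/H}\bigr)^\chi$ and it suffices to prove $\dim_{\mathbb F_\ell}\bigl(\mathrm{res}_z\sel{\ell}{E/H}\bigr)^\chi=1$. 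To set up the ambient space: because $z\in\mathcal S(U)$ the equality $\mathrm{Frob}_z(M/F)=[\tau_M u]$ with $u$ trivial on $L\supseteq H$ forces $z$ to be inert in $K$ and to split completely in $H/K$, while $z$ being special makes $\mathrm{Frob}_z^2$ act trivially on $E[\ell]$ (Lemma \ref{a-cong-lem}); via Corollary \ref{isom-loc-cor} this gives $\bigoplus_{z'|z}E(H_{z'})/\ell E(H_{z'})\cong\bigoplus_{z'|z}\tilde E(\F_{z'})/\ell\tilde E(\F_{z'})\cong\F_\ell[G]^2$, so the $\chi$-eigenspace in which we are working is $2$-dimensional.

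For the upper bound I would feed the Euler system into global duality. The Kolyvagin class $\xi(z)\in H^1(H,E)[\ell]$ is a \emph{global} class with $\mathrm{res}_v\xi(z)=0$ for all $v\nmid z$ by Proposition \ref{kol-loc-pro}(1); hence Proposition \ref{global-duality-pro} gives, for every $s\in\sel{\ell}{E/H}$,
\[ \langle\mathrm{res}_z(s),\mathrm{res}_z\xi(z)\rangle_z=\sum_v\langle\mathrm{res}_v(s),\mathrm{res}_v\xi(z)\rangle_v=0, \]
so $\mathrm{res}_z\bigl(\sel{\ell}{E/H}\bigr)$ is orthogonal to $\mathrm{res}_z\xi(z)$, equivalently $\bigl(\mathrm{res}_z\sel{\ell}{E/H}\bigr)^\chi$ is orthogonal to $\mathrm{res}_z e_{\bar\chi}\xi(z)$. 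By Lemma \ref{local-2-dim-lem} the class $\mathrm{res}_z e_{\bar\chi}\xi(z)$ is a nonzero vector of the $\bar\chi$-eigenspace, which pairs perfectly with the $2$-dimensional $\chi$-eigenspace; thus $\bigl(\mathrm{res}_z\sel{\ell}{E/H}\bigr)^\chi$ lies in a hyperplane and $\dim_{\mathbb F_\ell}\bigl(\mathrm{res}_z\sel{\ell}{E/H}\bigr)^\chi\leq 1$. (Note this is exactly the mechanism of Kolyvagin's method: the Euler system class $\xi(z)$ genuinely cuts the local image of the Selmer group in half.)

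For the lower bound I would exhibit a Selmer class whose $\chi$-component does not vanish at $z$. The natural candidate is the image $s$ of $\alpha_\chi$ under the Kummer embedding $E(H)/\ell E(H)\hookrightarrow\sel{\ell}{E/H}$ (injective by Corollary \ref{no-l-tors-cor}), which is nonzero by the standing hypothesis $\alpha_\chi\neq0$ and lies in the $\chi$-eigenspace. That $\mathrm{res}_z(s)\neq0$ is precisely what membership $z\in\mathcal S(U)$ provides: unwinding Corollary \ref{isom-loc-cor} together with $\mathrm{Frob}_z(M/F)=[\tau_M u]$, $u\in U$, the component of $u$ along $M_0=L(\alpha/\ell)^{\mathrm{Gal}}$ governs $\mathrm{res}_z(s)$, and the ``generating $E[\ell]$'' clauses in the definition \eqref{U-1-eq}--\eqref{U-2-eq} of $U$ force its $\chi$-part to be nonzero (dually, this is the same fact as $\Psi_z(\mathrm{res}_z e_{\bar\chi}\xi(zz_1))\neq0$). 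Hence $\dim_{\mathbb F_\ell}\bigl(\mathrm{res}_z\sel{\ell}{E/H}\bigr)^\chi\geq1$, and combining the two estimates yields $\dim_{\mathbb F_\ell}X^{\bar\chi}_z=1$.

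The step I expect to be the main obstacle is this last local bookkeeping at $z$: translating the combinatorial condition $z\in\mathcal S(U)$ into the nonvanishing of $\mathrm{res}_z(s)$ on the $\chi$-eigenspace. It requires carefully tracking the $\tau_{z_1}$-eigenspace decomposition of the Galois group $V$ and the explicit local images of $\alpha$ and $D_{z_1}(\alpha_{z_1})$ through the identification of Corollary \ref{isom-loc-cor}, and it is exactly where the construction of the compatible involutions $\tau_\star$ of \S\ref{involutions-subsec} and of the set $\mathcal S(U)$ pays off. This is the function-field transcription of the arguments in \cite[\S 5]{bd}.
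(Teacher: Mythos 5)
Your proof is correct and takes essentially the same route as the paper: the upper bound is obtained by feeding $\xi(z)$ (locally trivial away from $z$ by Proposition \ref{kol-loc-pro}) into the global reciprocity of Proposition \ref{global-duality-pro} and using Lemma \ref{local-2-dim-lem} to know that $\mathrm{res}_z e_{\bar\chi}\xi(z)$ is a nonzero vector in the two-dimensional $\bar\chi$-eigenspace, while the lower bound comes from the nonvanishing of $\mathrm{res}_z\alpha_\chi$ forced by $z\in\mathcal S(U)$ together with the nondegeneracy of the local Poitou--Tate pairing. The only difference is presentational: you phrase the estimate on $\bigl(\mathrm{res}_z\sel{\ell}{E/H}\bigr)^\chi$ whereas the paper works directly with its $\F_\ell$-dual $X^{\bar\chi}_z$, so the two computations are transposes of one another.
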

\begin{proof} As a consequence of Proposition \ref{global-duality-pro} and the first part of Proposition \ref{kol-loc-pro}, the nonzero element $\text{res}_ze_{\bar{\chi}}\xi(z)$ belongs to the kernel of the surjection
\[ \Big(\bigoplus_{z'|z}H^1(H_{z'},E)[\ell]\Big)^{\bar{\chi}}\;\longepi\;X^{\bar{\chi}}_z \]
induced by \eqref{psi-z-eq}. On the other hand, since $\text{res}_z\alpha_{\bar{\chi}}\not=0$ and the local Poitou-Tate pairing is non-degenerate, the space $X^{\bar{\chi}}_z$ does not vanish identically on $\alpha_{\bar{\chi}}$, hence $X^{\bar{\chi}}_z\not=0$. It follows that, necessarily, $X^{\bar{\chi}}_z$ is one-dimensional over $\F_\ell$. \end{proof}
\begin{pro} \label{equality-pro}
When $z$ varies in $\mathcal S(U)$, all of the $X^{\bar{\chi}}_z$ are equal.
\end{pro}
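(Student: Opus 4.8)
The plan is to show that all the one-dimensional spaces $X^{\bar{\chi}}_z$, as $z$ ranges over $\mathcal S(U)$, coincide inside $\dsel{\ell}{E/H}^{\bar{\chi}}$. By Proposition \ref{one-dim-pro} each $X^{\bar{\chi}}_z$ is a line, so it suffices to exhibit, for two arbitrary special primes $z,z^{\ast}\in\mathcal S(U)$, a single nonzero element lying in both $X^{\bar{\chi}}_z$ and $X^{\bar{\chi}}_{z^{\ast}}$. The natural candidate is the image of the Kolyvagin class $e_{\bar{\chi}}\xi(zz^{\ast})$ of conductor $zz^{\ast}$: I expect to run the ``two-variable'' version of the argument in Proposition \ref{one-dim-pro}, using Proposition \ref{global-duality-pro} together with the local description of $\xi(zz^{\ast})$ provided by Proposition \ref{kol-loc-pro} and Corollary \ref{isom-loc-cor}.

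First I would apply the global duality relation of Proposition \ref{global-duality-pro} with $s$ an arbitrary element of $\sel{\ell}{E/H}^{\bar{\chi}}$ (equivalently, of $\ddsel{\ell}{E/H}^{\chi}$) and with $\gamma=e_{\bar{\chi}}\xi(zz^{\ast})$. By part~1 of Proposition \ref{kol-loc-pro}, $\mathrm{res}_v\gamma=0$ at every place $v$ not dividing $zz^{\ast}$, since $\xi(zz^{\ast})$ is inflated from an everywhere-unramified class away from the primes dividing the conductor $zz^{\ast}$ and $E$ has good reduction there. Hence the global sum collapses to
\[ \langle\mathrm{res}_z(s),\mathrm{res}_z(\gamma)\rangle_z + \langle\mathrm{res}_{z^{\ast}}(s),\mathrm{res}_{z^{\ast}}(\gamma)\rangle_{z^{\ast}}=0. \]
Now I would interpret each term via $\Psi_z$: by the definition \eqref{psi-z-eq} of $\Psi_z$ and the computation in the proof of Proposition \ref{gen-pro}, $\langle\mathrm{res}_z(s),\mathrm{res}_z(\gamma)\rangle_z=\Psi_z\bigl(\mathrm{res}_z\gamma\bigr)(s)$, and the local class $\mathrm{res}_z\gamma=\mathrm{res}_z e_{\bar{\chi}}\xi(zz^{\ast})$ is, by part~2 of Proposition \ref{kol-loc-pro} (or Corollary \ref{isom-loc-cor}) and Lemma \ref{local-2-dim-lem}, a nonzero multiple of $\mathrm{res}_z e_{\bar{\chi}}\xi(z^{\ast})$ — here one uses that the local factor $\bigl((m_z\mathrm{Frob}_z-a_z)/\ell\bigr)$ is a unit mod $\ell$ precisely because $z$ is special, so the reduction of $D_{z^{\ast}}(\alpha_{z^{\ast}})$ is scaled by a nonzero scalar. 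Consequently $\Psi_z\bigl(\mathrm{res}_z\gamma\bigr)$ is a nonzero scalar multiple of $\Psi_{z}\bigl(\mathrm{res}_z e_{\bar{\chi}}\xi(z^{\ast})\bigr)$, which lies in $X^{\bar{\chi}}_{z^{\ast}}$ by the very definition of that space (exchanging the roles of $z$ and $z^{\ast}$). Symmetrically $\Psi_{z^{\ast}}\bigl(\mathrm{res}_{z^{\ast}}\gamma\bigr)\in X^{\bar{\chi}}_{z}$.

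Putting these together, the displayed global relation says that the element $\Psi_{z^{\ast}}(\mathrm{res}_{z^{\ast}}\gamma)\in X^{\bar{\chi}}_z$ and the element $\Psi_z(\mathrm{res}_z\gamma)\in X^{\bar{\chi}}_{z^{\ast}}$ are negatives of each other as functionals on $\sel{\ell}{E/H}^{\chi}$; hence this common element lies in $X^{\bar{\chi}}_z\cap X^{\bar{\chi}}_{z^{\ast}}$. It remains to check that it is nonzero, so that (both being lines, by Proposition \ref{one-dim-pro}) $X^{\bar{\chi}}_z=X^{\bar{\chi}}_{z^{\ast}}$. Nonvanishing follows the pattern of Proposition \ref{one-dim-pro}: by Lemma \ref{local-2-dim-lem}, $\mathrm{res}_z e_{\bar{\chi}}\xi(zz^{\ast})$ is, up to the unit above, $\mathrm{res}_z e_{\bar{\chi}}\xi(z^{\ast})$, whose image under the isomorphism of Corollary \ref{isom-loc-cor} is a nonzero multiple of $e_{\bar{\chi}}D_{z^{\ast}}(\alpha_{z^{\ast}})$; since the Poitou-Tate pairing is perfect, $\Psi_z(\mathrm{res}_z\gamma)\ne 0$ as long as some $s\in\sel{\ell}{E/H}^{\chi}$ has $\mathrm{res}_z(s)$ not orthogonal to it. I would guarantee such an $s$ by choosing $z,z^{\ast}\in\mathcal S(U)$ so that $\mathrm{res}_{z^{\ast}}\alpha_{\bar{\chi}}\ne 0$ — which is arranged exactly as $z_1$ was chosen at the start of \S\,5.2 via \v{C}ebotarev — using the fact that $\alpha$ defines a class in $\sel{\ell}{E/H}$ and, after projecting to the $\chi$-component, pairs nontrivially against $e_{\bar{\chi}}D_{z^{\ast}}(\alpha_{z^{\ast}})$ at $z^{\ast}$ by Corollary \ref{isom-loc-cor} and the local Tate pairing. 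The main obstacle I anticipate is bookkeeping: correctly tracking the $\bar{\chi}$-versus-$\chi$ duality (the dual Selmer group lives on the opposite eigencomponent), keeping the nonzero scalars $(m_z\mathrm{Frob}_z-a_z)/\ell$ and $\chi(\sigma_0)$ from accidentally being treated as zero, and verifying that the ``mixed-conductor'' Kolyvagin class $\xi(zz^{\ast})$ really does have the asserted local behaviour at \emph{both} $z$ and $z^{\ast}$ simultaneously — but all of this is the direct analogue of \cite[Proposition 5.4]{bd}, so the translation should be routine once the cohomological infrastructure above is in place.
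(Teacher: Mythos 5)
The skeleton of your approach — apply Proposition \ref{global-duality-pro} to the mixed-conductor class $e_{\bar{\chi}}\xi(z z^{\ast})$, collapse the global reciprocity relation to the two places $z$ and $z^{\ast}$ using part~1 of Proposition \ref{kol-loc-pro}, and deduce that the two resulting functionals are negatives of each other — is exactly the paper's plan. The paper, however, does not take $z$ and $z^{\ast}$ both arbitrary in $\mathcal S(U)$: it fixes $z^{\ast}=z_1$, the auxiliary prime chosen at the start of \S 5.1 so that $\mathrm{res}_{z_1}\alpha_{\bar{\chi}}\neq0$, proves $X^{\bar{\chi}}_z=X^{\bar{\chi}}_{z_1}$ for every $z\in\mathcal S(U)$, and concludes by transitivity. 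This is not a cosmetic choice. Lemma \ref{local-2-dim-lem}, which is the ingredient that makes the localized class $\mathrm{res}_z e_{\bar{\chi}}\xi(z z_1)$ provably nonzero, is stated and proved \emph{only} for the pair $\{\xi(z),\xi(z z_1)\}$: the fields $M_0$, $M_1$, the space $V$, the subset $U$, and hence the very definition of $\mathcal S(U)$ are all built around $z_1$ and the points $\alpha_{\bar{\chi}}$, $e_{\bar{\chi}}D_{z_1}(\alpha_{z_1})$. You cannot invoke Lemma \ref{local-2-dim-lem} for $\xi(z z^{\ast})$ with an arbitrary $z^{\ast}\in\mathcal S(U)$; that would require re-running the entire construction of \S 5.1 with $z^{\ast}$ in place of $z_1$, which the paper never does and which is unnecessary, since comparing everything to the fixed line $X^{\bar{\chi}}_{z_1}$ and applying transitivity is both shorter and logically sufficient. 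Tellingly, you end up reimposing on $z^{\ast}$ the condition $\mathrm{res}_{z^{\ast}}\alpha_{\bar{\chi}}\neq0$, which is precisely the defining property of $z_1$; at that point you are, in effect, reproducing the paper's argument but with extra friction.

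Beyond the structural issue there are two local errors that would need fixing. First, you write that $\mathrm{res}_z e_{\bar{\chi}}\xi(z z^{\ast})$ is a nonzero multiple of $\mathrm{res}_z e_{\bar{\chi}}\xi(z^{\ast})$ — but part~1 of Proposition \ref{kol-loc-pro} gives $\mathrm{res}_z\xi(z^{\ast})=0$ since $z\nmid z^{\ast}$, so the latter class vanishes identically; what Corollary \ref{isom-loc-cor} really identifies $\mathrm{res}_z\xi(z z^{\ast})$ with is (the reduction of) the \emph{point} $D_{z^{\ast}}(\alpha_{z^{\ast}})$, not the \emph{cohomology class} $\xi(z^{\ast})$. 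Second, you place $\Psi_{z^{\ast}}(\mathrm{res}_{z^{\ast}}\gamma)$ in $X^{\bar{\chi}}_{z}$ and $\Psi_z(\mathrm{res}_z\gamma)$ in $X^{\bar{\chi}}_{z^{\ast}}$, but by the definition of $X_z$ as the image of $\Psi_z$ it is the other way around: $\Psi_z(\mathrm{res}_z\gamma)\in X^{\bar{\chi}}_z$ and $\Psi_{z^{\ast}}(\mathrm{res}_{z^{\ast}}\gamma)\in X^{\bar{\chi}}_{z^{\ast}}$. Fortunately the conclusion you draw (the common element lies in the intersection) is still correct once the memberships are straightened out, but as written the logic is inverted. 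In summary: keep the global-duality-plus-Kolyvagin-class strategy, fix the second prime to be $z_1$, use Lemma \ref{local-2-dim-lem} (together with the first part of Proposition \ref{one-dim-pro}, which already shows $\mathrm{res}_z e_{\bar{\chi}}\xi(z)$ spans the kernel of the surjection onto $X^{\bar{\chi}}_z$) to see that $\Psi_z(\mathrm{res}_z e_{\bar{\chi}}\xi(z z_1))$ generates the one-dimensional space $X^{\bar{\chi}}_z$, and conclude by transitivity.
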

\begin{proof} We follow \cite[Proposition 5.5]{bd} and show that $X^{\bar{\chi}}_z=X^{\bar{\chi}}_{z_1}$ for all $z\in\mathcal S(U)$. Recall the definition of the map $\Psi_z$ introduced in \eqref{psi-z-eq} and apply Proposition \ref{global-duality-pro} to the class $e_{\bar{\chi}}\xi(zz_1)$, thus obtaining:
\[ \Psi_z\bigl(\text{res}_ze_{\bar{\chi}}\xi(zz_1)\bigr)+\Psi_{z_1}\bigl(\text{res}_{z_1}e_{\bar{\chi}}\xi(zz_1)\bigr)=0 \]
in $\sel{\ell}{E/H}^{\ast,\bar{\chi}}$. But Lemma \ref{local-2-dim-lem} and Proposition \ref{one-dim-pro} imply that $\Psi_z\bigl(\text{res}_ze_{\bar{\chi}}\xi(zz_1)\bigr)$ generates $X^{\bar{\chi}}_z$. It follows that $\Psi_{z_1}\bigl(\text{res}_{z_1}e_{\bar{\chi}}\xi(zz_1)\bigr)$ is nonzero and generates $X^{\bar{\chi}}_{z_1}$, whence the claim. \end{proof}

\section{Proof of Theorem \ref{main2-thm}}
 
In this section we give a proof of Theorem \ref{main2-thm}; as explained above, this will also yield Theorem \ref{main-thm}. Let $\delta:E(H)/\ell E(H)\hookrightarrow H^1(H,E[\ell])$ be the coboundary map. Moreover, let 
\[ \sha{E/H}:=\ker\Big(H^1(H,E)\longrightarrow\prod_v H^1(H_v,E)\Big) \]
be the Shafarevich-Tate group of $E$ over $H$ (an analogous notation will be adopted, below, for an abelian variety of arbitrary dimension). Theorem \ref{main2-thm} is implied by 
\begin{thm} \label{main3-thm}
The $\F_\ell$-vector spaces $\sel{\ell}{E/H}^\chi$ and $\bigl(E(H)/\ell E(H)\bigr)^\chi$ are one-dimensional, and are generated over $\F_\ell$ by the non-zero elements $\delta(\alpha_\chi)$ and $\alpha_\chi$, respectively. Furthermore, $\sha{E/H}^\chi[\ell^\infty]=\{0\}$.
\end{thm}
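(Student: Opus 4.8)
The plan is to run the standard Kolyvagin descent argument, now fully set up by the results of Sections~5--6. First I would prove that $\sel{\ell}{E/H}^\chi$ is one-dimensional. By the very definition of the dual Selmer group and Poitou--Tate duality recalled in \S\ref{local-duality-subsec}, the pairing between $\sel{\ell}{E/H}$ and $\dsel{\ell}{E/H}$ is perfect, and it respects the $G$-action, hence induces a perfect pairing between $\sel{\ell}{E/H}^\chi$ and $\dsel{\ell}{E/H}^{\bar\chi}$. By Proposition~\ref{gen-pro} (with the set $U$ fixed in \S\ref{global-duality-pro}, whose $\chi$- or $(\chi\oplus\bar\chi)$-component was arranged precisely to generate the relevant $+$-eigenspace) the subgroups $X_z$, $z\in\mathcal S(U)$, generate $\dsel{\ell}{E/H}$; taking $\bar\chi$-components, the $X^{\bar\chi}_z$ generate $\dsel{\ell}{E/H}^{\bar\chi}$. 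By Proposition~\ref{equality-pro} all the $X^{\bar\chi}_z$ coincide, and by Proposition~\ref{one-dim-pro} each is one-dimensional, so $\dsel{\ell}{E/H}^{\bar\chi}$ is one-dimensional, and therefore so is $\sel{\ell}{E/H}^\chi$ by the perfect pairing. It remains to identify the generator: since $\alpha_\chi\neq0$ in $E(H)/\ell E(H)$ by hypothesis and $E[\ell](H)=0$ by Corollary~\ref{no-l-tors-cor}, the class $\delta(\alpha_\chi)$ is a nonzero element of $\sel{\ell}{E/H}^\chi$ (the point $\alpha=\alpha_\mathfrak c\in E(H)$ gives rise to a Selmer class, being in the image of the Kummer map locally everywhere), hence generates it.

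Next I would deduce the statement about $\bigl(E(H)/\ell E(H)\bigr)^\chi$. The Kummer map gives an inclusion $\bigl(E(H)/\ell E(H)\bigr)^\chi\hookrightarrow\sel{\ell}{E/H}^\chi$, and the latter is one-dimensional generated by $\delta(\alpha_\chi)$, which lies in the image of $E(H)/\ell E(H)$; since $\alpha_\chi\neq0$, the source is nonzero, so the inclusion is an equality and $\bigl(E(H)/\ell E(H)\bigr)^\chi$ is one-dimensional, generated by $\alpha_\chi$. Finally, for the vanishing of $\sha{E/H}^\chi[\ell^\infty]$: from the Kummer sequence we have the exact sequence
\[ 0\longrightarrow E(H)/\ell E(H)\longrightarrow\sel{\ell}{E/H}\longrightarrow\sha{E/H}[\ell]\longrightarrow0, \]
which on $\chi$-components (using that $\ell\nmid|G|$, so taking $\chi$-components is exact) shows $\sha{E/H}^\chi[\ell]=0$, because the first two terms are both one-dimensional with the first surjecting onto the second. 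To upgrade from $\sha[\ell]=0$ to $\sha[\ell^\infty]=0$, I would either invoke the divisibility of $\sha{E/H}[\ell^\infty]$ in its $\ell$-divisible part combined with finiteness (so $\sha{E/H}^\chi[\ell]=0$ forces $\sha{E/H}^\chi[\ell^\infty]=0$), or, more self-containedly, run the descent with $\ell$ replaced by $\ell^n$: all the choices in \S\ref{aux-subsec} and the Euler-system construction go through with $\ell^n$ (Corollary~\ref{no-l-tors-cor} already gives $E[\ell^n](H)\subseteq E[\ell](H_\mathfrak d)^{\oplus}=0$... more precisely $E[\ell^\infty](H)=0$), yielding $\sha{E/H}^\chi[\ell^n]=0$ for all $n$.

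The main obstacle I expect is the passage from $\sha[\ell]=0$ to $\sha[\ell^\infty]=0$ done cleanly: the quickest route uses that over a global function field $\sha{E/H}[\ell^\infty]$ is finite (known here, e.g.\ via the work of Kato--Trihan alluded to in the introduction, or more elementarily because the relevant Selmer groups are finite), and a finite $\ell$-group $S$ with a perfect alternating pairing (the Cassels--Tate pairing) satisfying $S[\ell]^\chi=0$ must have $S^\chi=0$; alternatively one notes $\bigl(E(H)/\ell^n E(H)\bigr)^\chi$ is at most one-dimensional for every $n$ (since $\bigl(E(H)\otimes\Q_\ell/\Z_\ell\bigr)^\chi$ has corank $\le1$, itself a consequence of the $n=1$ bound and Nakayama), and then the $\ell^n$-descent Selmer bound forces the $\ell^n$-torsion of $\sha$ to vanish in the $\chi$-part. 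Either way this is a formal finish rather than new input; the genuine content is entirely contained in Propositions~\ref{gen-pro}, \ref{one-dim-pro} and~\ref{equality-pro}, and the present theorem is their bookkeeping consequence.
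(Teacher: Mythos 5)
Your proposal follows the paper's proof exactly in its essential content: combine Propositions~\ref{gen-pro}, \ref{one-dim-pro} and \ref{equality-pro} to conclude that $\dsel{\ell}{E/H}^{\bar\chi}$ is one-dimensional, dualize to get $\dim_{\mathbb F_\ell}\sel{\ell}{E/H}^\chi=1$, identify the generator as $\delta(\alpha_\chi)$ using the injectivity of the coboundary (Corollary~\ref{no-l-tors-cor}) and the hypothesis $\alpha_\chi\ne 0$, and read off the remaining two claims from the short exact sequence relating $E(H)/\ell E(H)$, $\sel{\ell}{E/H}$ and $\sha{E/H}[\ell]$. The only place where you diverge from the paper is in over-engineering the final upgrade from $\sha{E/H}^\chi[\ell]=0$ to $\sha{E/H}^\chi[\ell^\infty]=0$: you suggest invoking finiteness of $\sha$, or the Cassels--Tate pairing, or re-running the entire descent at $\ell^n$, but none of this is needed. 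Since $\ell\nmid|G|$ the idempotent $e_\chi$ acts on the $\ell$-primary group $\sha{E/H}[\ell^\infty]$, and a (not necessarily finite) $\ell$-primary abelian group $A$ with $A[\ell]=0$ is already trivial, because any nonzero $x\in A$ of order $\ell^n$ (with $n\ge 1$) gives the nonzero element $\ell^{n-1}x\in A[\ell]$. That formal observation is all the paper uses, and it is simpler and more robust than any of the three alternatives you sketch (in particular, the $\ell^n$-descent would require re-establishing the Euler system relations and the Kolyvagin class construction modulo $\ell^n$, which is a substantial detour rather than a finish). Everything else in your write-up is correct and is the argument the paper gives.
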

\begin{proof} By Proposition \ref{gen-pro}, the $X^{\bar{\chi}}_z$ generate $\sel{\ell}{E/H}^{\ast,\bar{\chi}}$ when $z$ ranges over $\mathcal S(U)$. On the other hand, each $X^{\bar{\chi}}_z$ is one-dimensional by Proposition \ref{one-dim-pro} and all the $X^{\bar{\chi}}_z$ are equal by Proposition \ref{equality-pro}. Thus we get:
\[ \dim_{\mathbb F_\ell}\sel{\ell}{E/H}^\chi=\dim_{\mathbb F_\ell}\sel{\ell}{E/H}^{\ast,\bar{\chi}}=1, \]
and $\sel{\ell}{E/H}^\chi$ is generated over $\F_\ell$ by $\delta(\alpha_\chi)$, which is non-zero as $\alpha_\chi\not=0$ in $E(H)/\ell E(H)$ by assumption and $\delta$ is injective. In light of this, the claim on $\bigl(E(H)/\ell E(H)\bigr)^\chi$ is obvious. Finally, the short exact sequence
\[ 0\longrightarrow\bigl(E(H)/\ell E(H)\bigr)^\chi\longrightarrow\sel{\ell}{E/H}^\chi\longrightarrow\sha{E/H}^\chi[\ell]\longrightarrow0 \]
together with the first two statements gives the triviality of $\sha{E/H}^\chi[\ell^\infty]$. \end{proof}

\section{Results when $\chi=\boldsymbol 1$ and consequences} \label{trivial-sec}

It seems worthwhile to explicitly formulate our results in the special case where $\chi$ is the trivial character of $G$. These statements are well known (and nowadays could probably be defined ``classical'') for elliptic curves over $\Q$, thanks to the pioneering work of Kolyvagin (cf., for example, \cite{g2}); however, their counterparts in the function field setting have been established only recently by Brown in \cite{bro2}, where a basic Kolyvagin-type strategy is combined with the abstract language of ``Heegner sheaves'' and ``Heegner modules'' that is introduced for the purpose. We think that our line of proof, which uses exclusively elementary techniques in Galois cohomology and algebraic number theory (as in \cite{bd}), is much closer in spirit to the original arguments by Kolyvagin, and in this sense represents yet another instance of the parallel between number field and function field arithmetic. A good account on the genesis and the history of Kolyvagin-style results for elliptic curves over global function fields (and the difficulties encountered in supplying fully acceptable proofs) is given in \cite[\S 3.4]{u1}.

As in the introduction, set $\alpha_K:=\tr{H}{K}(\alpha)\in E(K)$. As a corollary of the results in this paper, we obtain
\begin{thm}[Brown] \label{main-K-thm}
If $\alpha_K$ is not a torsion point then the following are true:
\begin{itemize}
\item[1.] $E(K)$ has rank one;
\vskip 2mm
\item[2.] $\dim_{\mathbb F_\ell}\sel{\ell}{E/K}=1$ for infinitely many primes $\ell\not=p$;
\vskip 2mm
\item[3.] $\sha{E/K}[\ell^\infty]=\{0\}$ for infinitely many primes $\ell\not=p$.
\end{itemize}
\end{thm}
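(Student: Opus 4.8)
The plan is to specialise everything proved so far to the trivial character $\chi=\boldsymbol 1$ and then to transfer the conclusions from $H$ down to $K$, exploiting that the auxiliary prime $\ell$ has been chosen coprime to $|G|$ from the outset (condition $4$ of \S\ref{aux-subsec}).

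First I would record the identity $\alpha_{\boldsymbol 1}=e_{\boldsymbol 1}(\alpha)=\frac{1}{|G|}\tr{H}{K}(\alpha)=\frac{1}{|G|}\alpha_K$ in $E(H)\otimes\Q$, so that the hypothesis ``$\alpha_K$ is not a torsion point'' is exactly ``$\alpha_{\boldsymbol 1}\neq0$ in $E(H)\otimes\C$''. Statement $1$ is then Corollary \ref{main-cor}: applying Theorem \ref{main-thm} to $\chi=\boldsymbol 1$ gives $\dim_{\mathbb C}E(H)^{\boldsymbol 1}=1$, and since $E(K)\otimes\C$ is a nonzero subspace of $E(H)^{\boldsymbol 1}$ (it contains $\alpha_K$) it must be all of it, whence $\mathrm{rank}\,E(K)=1$. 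For statements $2$ and $3$ I would use the refined mod $\ell$ version, Theorem \ref{main3-thm}, for $\chi=\boldsymbol 1$. Since $E(K)$ is finitely generated (Mordell--Weil/Lang--N\'eron, $E$ being non-isotrivial) and $\alpha_K$ has infinite order, its image in $E(K)/\ell E(K)$ is nonzero for all but finitely many $\ell$; discarding those $\ell$ from the infinite set $\tilde{\mathcal P}$ of \eqref{tilde-p-set-eq}, I may assume $\alpha_{\boldsymbol 1}\neq0$ in $E(H)/\ell E(H)$, so that Theorem \ref{main3-thm} yields $\dim_{\mathbb F_\ell}\sel{\ell}{E/H}^{\boldsymbol 1}=1$ and $\sha{E/H}^{\boldsymbol 1}[\ell^\infty]=\{0\}$ for every such $\ell$.

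The heart of the matter is then the descent comparison between $K$ and $H$. Because $\gcd(\ell,|G|)=1$, positive-degree cohomology of $G$ (or of any decomposition subgroup of $G$) with $\ell$-primary torsion coefficients vanishes, being simultaneously annihilated by the group order and $\ell$-primary; moreover $e_{\boldsymbol 1}$ is precisely the projector onto $G$-invariants. Hence inflation--restriction gives canonical isomorphisms $E(K)/\ell E(K)\xrightarrow{\ \sim\ }\bigl(E(H)/\ell E(H)\bigr)^{\boldsymbol 1}$ and $\sel{\ell}{E/K}\xrightarrow{\ \sim\ }\sel{\ell}{E/H}^{\boldsymbol 1}$ --- the latter because the local conditions defining the Selmer groups are preserved under restriction, again by coprimality of $\ell$ to the relevant local Galois groups --- together with an injection $\sha{E/K}[\ell]\hookrightarrow\sha{E/H}^{\boldsymbol 1}[\ell]$. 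Plugging in Theorem \ref{main3-thm} then gives $\dim_{\mathbb F_\ell}\sel{\ell}{E/K}=1$ (statement $2$) and $\sha{E/K}[\ell^\infty]=\{0\}$ (statement $3$, using that a nonzero $\ell$-primary torsion group has nonzero $\ell$-torsion), for each of the infinitely many $\ell$ singled out above. (Statement $1$ can alternatively be re-derived here from the exact sequence $0\to E(K)/\ell E(K)\to\sel{\ell}{E/K}\to\sha{E/K}[\ell]\to0$, which forces $\dim_{\mathbb F_\ell}E(K)/\ell E(K)=1$ and hence $\mathrm{rank}\,E(K)=1$ since $\alpha_K$ is non-torsion.)

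I expect the only slightly delicate point to be the bookkeeping in this last step --- checking that the Selmer and Shafarevich--Tate local conditions genuinely match up after restriction, with no spurious contribution at the finitely many places where $H/K$ ramifies --- but since $\ell$ was arranged to be prime to $|G|$ at the very start, each verification reduces to a routine application of inflation--restriction together with the vanishing of positive-degree group cohomology modulo the order of the group.
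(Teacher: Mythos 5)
Your argument is correct and runs on essentially the same rails as the paper's: specialise Theorem \ref{main-thm} and Theorem \ref{main3-thm} to $\chi=\boldsymbol 1$, then compare Selmer groups over $K$ and $H$ via inflation--restriction. There is one small but instructive difference worth pointing out. You propose to establish an \emph{isomorphism} $\sel{\ell}{E/K}\cong\sel{\ell}{E/H}^{\boldsymbol 1}$ and you correctly flag the delicate part of that claim: surjectivity requires checking that the Selmer local conditions over $K$ and over $H$ genuinely match at every place, including those ramified in $H/K$. The paper sidesteps this entirely. It only needs the \emph{injection} $\sel{\ell}{E/K}\hookrightarrow\sel{\ell}{E/H}^{G}$, which follows immediately from $E[\ell](H)=\{0\}$ (so the inflation kernel $H^1(\gal{H}{K},E[\ell](H))$ is trivial for free, without invoking coprimality of $\ell$ and $|G|$). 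That injection gives the upper bound $\dim_{\F_\ell}\sel{\ell}{E/K}\le1$ once Theorem \ref{main3-thm} is known, and the lower bound $\ge1$ comes separately from statement $1$ together with the exact sequence
\[ 0\longrightarrow E(K)/\ell E(K)\longrightarrow\sel{\ell}{E/K}\longrightarrow\text{\cyr{X}}(E/K)[\ell]\longrightarrow0 \]
and the fact that $E(K)$ has rank one, so $E(K)/\ell E(K)\neq0$. This two-sided squeeze proves statement $2$, and then statement $3$ drops out of the same exact sequence by the dimension count, exactly as you observe at the end. So your route works, but you can make the local bookkeeping evaporate by replacing the surjectivity claim with the independent lower bound coming from the rank statement.
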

\begin{proof} The first point is immediate from Theorem \ref{main-thm} upon taking $\chi=\boldsymbol 1$. In fact, the condition that $\alpha_K$ has infinite order in $E(K)$ is equivalent to $\alpha_{\boldsymbol 1}=\alpha_K/|G|$ being nonzero in the complex vector space $E(K)\otimes\C\subset E(H)^{\boldsymbol 1}$. But then $\dim_{\mathbb C}E(H)^{\boldsymbol 1}=1$ by Theorem \ref{main-thm}, hence
\[ \text{rank}_{\mathbb Z}\,E(K)=\dim_{\mathbb C}\bigl(E(K)\otimes\C\bigr)=1. \]
The fact that $E(K)$ has rank one implies that 
\begin{equation} \label{dim-E(K)-eq}
\dim_{\mathbb F_\ell}\bigl(E(K)/\ell E(K)\bigr)=1
\end{equation}
for almost all primes $\ell$, and then the short exact sequence
\begin{equation} \label{descent-eq}
0\longrightarrow E(K)/\ell E(K)\longrightarrow\sel{\ell}{E/K}\longrightarrow\sha{E/K}[\ell]\longrightarrow0 
\end{equation}
ensures that $\dim_{\mathbb F_\ell}\sel{\ell}{E/K}\geq1$ for all but finitely many primes $\ell$. To show the opposite inequality for infinitely many $\ell\not=p$, let $\ell$ be any of the (infinitely many) primes for which Theorem \ref{main3-thm} holds. In particular we can assume that $E[\ell](H)=\{0\}$; hence the group $H^1\bigl(\gal{H}{K},E[\ell](H)\bigr)$ is trivial, and the inflation-restriction sequence in Galois cohomology gives an injection 
\[ \sel{\ell}{E/K}\;\longmono\;\sel{\ell}{E/H}^G. \]
But Theorem \ref{main3-thm} with $\chi=\boldsymbol 1$ implies that $\dim_{\mathbb F_\ell}\sel{\ell}{E/H}^G=1$, and the second statement is proved. Finally, part $3$ follows by combining sequence \eqref{descent-eq} with \eqref{dim-E(K)-eq} and part $2$. \end{proof}
The main reason of interest in Theorem \ref{main-K-thm} is the fact that it implies the Conjecture of Birch and Swinnerton-Dyer (BSD conjecture, for short) for $E_{/K}$. Indeed, Kato and Trihan have proved the following deep result (generalizing previous results of Tate and Milne).
\begin{thm}[Kato-Trihan] \label{kt-thm}
Let $A$ be an abelian variety over a function field $F$ of characteristic $p>0$. If $\sha{A/F}[\ell^\infty]$ is finite for some prime number $\ell$ then the Conjecture of Birch and Swinnerton-Dyer for $A$ is true.
\end{thm}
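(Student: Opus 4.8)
This is a deep theorem of Kato and Trihan, resting on a long line of earlier work of Tate, Artin--Tate, Milne and Schneider; here I only sketch the approach I would take. The conjecture for $A/F$ splits into two assertions: the \emph{rank equality} $\mathrm{ord}_{s=1}L(A,s)=\mathrm{rank}_{\Z}A(F)$, and the \emph{leading-coefficient formula} expressing the leading Taylor coefficient of $L(A,s)$ at $s=1$ through the N\'eron--Tate regulator, the local Tamagawa numbers, the torsion subgroups of $A(F)$ and of the dual abelian variety $A^\vee(F)$, and the order of $\sha{A/F}$. The first step would be the classical reduction: spreading $A$ out to its N\'eron model over the smooth projective curve $\cC$ with function field $F$, finiteness of $\sha{A/F}[\ell^\infty]$ for a single prime $\ell\neq p$ is equivalent to finiteness of the whole group $\sha{A/F}$, to the Tate conjecture for an associated arithmetic surface (for instance the minimal regular model, when $A$ is an elliptic curve), and to the rank equality; it moreover yields the leading-coefficient formula \emph{up to a power of $p$}.

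The genuinely new input---and what allows the hypothesis ``for some prime number $\ell$'' to include the case $\ell=p$---is the control of the $p$-primary side. My plan there would be to replace $\ell$-adic \'etale cohomology by $p$-adic coefficients: log-crystalline and rigid cohomology, and the de Rham--Witt complex, of a regular model of $A$ over $\cC$. One would isolate the unit-root part of the relevant $F$-isocrystal, prove a $p$-adic duality statement (a crystalline/syntomic analogue of Poitou--Tate duality) in which $\sha{A/F}[p^\infty]$ appears as a finite torsion cohomology group, and compare $L(A,s)$ near $s=1$ with the characteristic polynomial of Frobenius acting on these $p$-adic cohomology groups via the Grothendieck--Lefschetz trace formula in rigid cohomology. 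Matching $p$-adic valuations would then supply the missing $p$-part of the leading term, with a $p$-adic regulator playing the role of the archimedean one.

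Finally one bootstraps: once the $p$-part is in hand, the product formula for the leading coefficient, together with the functional equation and the equivalences already available away from $p$, force finiteness of $\sha{A/F}[\ell^\infty]$ for \emph{every} prime $\ell$ as soon as it holds for one of them---whether or not that prime equals $p$---and hence the full leading-coefficient formula and the rank equality. Assembling the pieces would give the BSD conjecture for $A$ under the stated hypothesis. The hardest part---and the technical heart of the Kato--Trihan argument---will be the $p$-adic step: building a $p$-adic cohomological machine robust enough that $\sha{A/F}[p^\infty]$ is visibly a cohomology group, that the special value of $L$ is computed by a trace formula, and that the unit-root subcrystal can be handled in the presence of bad reduction. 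Everything preceding the case $\ell=p$ was, in essence, already in the literature.
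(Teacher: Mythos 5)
The paper does not prove this theorem. It is quoted as a black box attributed to Kato and Trihan, with the one-sentence remark immediately following it that ``A proof of this theorem, which employs techniques in flat and crystalline cohomology, is given in \cite{kt}.'' There is therefore no in-paper argument for your sketch to be measured against; anything you write here is necessarily a gloss on the cited reference rather than a reconstruction of a proof the paper contains. As such a gloss, your outline is a fair description of the shape of the Kato--Trihan argument: the classical $\ell\neq p$ reduction due to Tate, Artin--Tate, Milne and Schneider, which yields the rank equality and the leading-coefficient formula up to a power of $p$ from finiteness of a single $\ell$-primary part of {\cyr{X}}; the genuinely new $p$-adic input (syntomic complexes, crystalline/log-crystalline and de Rham--Witt cohomology, $p$-adic duality) that controls the $p$-part and lets the hypothesis include $\ell=p$; and the bootstrapping that promotes finiteness at one prime to finiteness of the whole group. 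For the role the theorem plays in this paper --- as an external input feeding Corollary \ref{bsd-cor} --- a citation is exactly what is wanted, and no more than this pointer to \cite{kt} should be attempted here; a genuine proof is far outside the scope of the present article.
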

A proof of this theorem, which employs techniques in flat and crystalline cohomology, is given in \cite{kt}. 
\begin{rem}
At present, the implication in the Kato-Trihan theorem must be seen as a peculiarity of the function field setting: despite much effort in this direction, no general result of this kind is known for abelian varieties over number fields.
\end{rem}
When combined with Theorem \ref{kt-thm}, part $3$ of Theorem \ref{main-K-thm} yields the remarkable 
\begin{cor} \label{bsd-cor}
With notation as in the rest of the paper, if $\alpha_K$ is not a torsion point then the Conjecture of Birch and Swinnerton-Dyer holds for $E_{/K}$. In particular, $\sha{E/K}$ is finite.
\end{cor}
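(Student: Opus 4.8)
The plan is to deduce the corollary formally from two results already established: part~$3$ of Theorem~\ref{main-K-thm} and the Kato--Trihan theorem (Theorem~\ref{kt-thm}). First I would note that $K$, being a finite separable extension of $F$, is itself a global function field of characteristic $p>0$; hence $E$, viewed as an abelian variety over $K$, is a legitimate object to which Theorem~\ref{kt-thm} applies.

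Next, I would invoke the standing hypothesis that $\alpha_K$ is not a torsion point. Under this assumption, part~$3$ of Theorem~\ref{main-K-thm} guarantees that $\sha{E/K}[\ell^\infty]=\{0\}$ for infinitely many primes $\ell\neq p$. In particular we may fix any one such prime $\ell$, and for this choice the group $\sha{E/K}[\ell^\infty]$ is finite (indeed trivial), so the hypothesis of Theorem~\ref{kt-thm} is met with $A=E_{/K}$ and this $\ell$. Applying that theorem then yields the Birch and Swinnerton-Dyer conjecture for $E_{/K}$; and since the finiteness of $\sha{E/K}$ is part of the assertion of the BSD conjecture (equivalently, it already follows from the finiteness of a single $\ell$-primary component together with the rank statement), we conclude that $\sha{E/K}$ is finite, completing the argument.

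I do not expect any genuine obstacle here: the statement is a formal consequence of inputs that are in hand, and the only point deserving even a moment's care is the observation that $K$ is an admissible base field for the Kato--Trihan theorem.
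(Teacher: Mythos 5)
Your proof is correct and is exactly the argument the paper has in mind: one fixes any of the infinitely many primes $\ell\ne p$ with $\sha{E/K}[\ell^\infty]=\{0\}$ supplied by part~$3$ of Theorem~\ref{main-K-thm} and then applies Theorem~\ref{kt-thm} to $E_{/K}$. The extra observation that $K$ is itself a global function field of characteristic $p$ is a sensible sanity check, but otherwise there is nothing to add.
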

\begin{rems} \label{bsd-rems}
1. The validity of the BSD conjecture for $E_{/K}$ asserted by Corollary \ref{bsd-cor} descends to $E_{/F}$: see \cite[Theorem 1.13.1]{bro2} for precise statements.\\
2. As pointed out in \cite{bro2}, the finiteness of $\sha{E/F}$ is equivalent (by work of Artin, Tate and Milne) to the so-called Conjecture of Tate for the elliptic surface $\mathcal E_{/\mathbb F_q}$ over $\mathcal C$ which is a proper smooth model of the N\'eron model of $E_{/F}$ (cf. \cite[\S 1]{bro1}, \cite[\S 1.1]{bro2}).\\
3. A different, highly geometric approach to the BSD conjecture for elliptic curves over function fields (via non-vanishing results for twists of $L$-functions) is proposed by Ulmer in \cite{u2}. As explained in \cite[\S 3.8]{u1} (where the reader can find a brief overview of the main results of \cite{u2}), the statements in \cite[Theorem 1.2]{u2} together with a suitably general Gross-Zagier formula imply the BSD conjecture for elliptic curves over function fields of characteristic $p>3$ whose $L$-functions vanish of order $\leq1$ at the critical point. Note that the validity of the weak BSD conjecture (i.e., the equality of the algebraic and the analytic ranks) for an elliptic curve $E$ over $\Q$ satisfying this analytic condition is established by combining the results of Kolyvagin with the modularity of $E$ and the Gross-Zagier formula.\\ 
4. In the special case $F=\F_q(T)$ (i.e., when $\cC=\mathbb P^1_{/\mathbb F_q}$), R\"uck and Tipp prove in \cite{rt} a function field analogue of the Gross-Zagier formula. In particular, the Heegner point $\alpha_K$ has infinite order (i.e., the assumption of Theorem \ref{main-K-thm} is satisfied) if and only if the $L$-series of $E_{/K}$ does not vanish at $1$ (cf. \cite[Theorem 4.2.1]{rt}).\\  
5. For general Gross-Zagier formulas over global function fields the reader is referred to work in progress by Ulmer (\cite{u3}).
\end{rems}
 
\section{Final remarks} 

We conclude with a few considerations of a general nature. First of all, the arguments in this paper extend to the case of an abelian variety $A$ over $F$ which is a quotient of the jacobian of the Drinfeld modular curve $X_0(\n)$. In this situation it is expected that if $d$ is the dimension of $A$ then $\dim_{\mathbb C}A(H)^\chi=d$ provided that $\alpha_\chi\not=0$ (see \cite[\S\S 10-11]{g1} for details over number fields).

Coming back to the one-dimensional case (i.e., to an elliptic curve $E_{/F}$ as above), Corollary \ref{bsd-cor} shows that in our function field setting the knowledge of the behaviour of $\alpha_K$ gives (thanks to the result of Kato and Trihan) a much stronger control on the arithmetic of $E$ than the one guaranteed by the corresponding information for elliptic curves over $\Q$: in fact, if $\alpha_K$ has infinite order then not only do we get that $E(K)$ has rank one but we also know that the complete Birch and Swinnerton-Dyer conjecture for $E$ is true. In characteristic $0$, instead, we know that the weak BSD conjecture is true for elliptic curves over $\Q$ of analytic rank $\leq1$ (cf. Remark \ref{bsd-rems}.3 above). However, as remarked by Ulmer in \cite[\S 3.4]{u1}, over function fields it is rather difficult to explicitly compute the modular parametrization $\pi_E$ and thus the Heegner point $\alpha_K$, so the hypothesis of Corollary \ref{bsd-cor} is usually hard to check (in contrast to what happens over $\C$, where by now the computation of Heegner points has been efficiently implemented). Hence we can safely say that, at least with our present state of knowledge, in the function field case Heegner points give finer information on the arithmetic of an elliptic curve than they do over number fields, but this information is harder to access than in characteristic zero.
  
Finally, a natural question arises: can the ``Heegner hypothesis'', i.e. the assumption that all prime divisors of $\n$ split in the imaginary quadratic field $K$, be relaxed? When dealing with an elliptic curve $E$ over $\Q$ of conductor $N$ (which we suppose semistable, for simplicity) this can be done by requiring that no prime factor of $N$ ramifies in $K$ and that the number of primes dividing $N$ which are inert in $K$ is \emph{even}. This allows one to replace the modular curve $X_0(N)_{/\mathbb Q}$ by a Shimura curve $X_{/\mathbb Q}$ attached to the indefinite quaternion algebra over $\Q$ whose discriminant is the product of the primes which divide $N$ and are inert in $K$. The curve $X$ parametrizes $E$ in much the same way as $X_0(N)$ does, and the Heegner point $\alpha$ can be replaced by the image on $E$ of a suitably defined Heegner point on $X$. Then, since Heegner points on Shimura curves enjoy the same formal ``Euler system'' properties of those on modular curves, the original arguments of Bertolini and Darmon appearing in \cite{bd} carry over \emph{verbatim} to this more general situation. With this in mind, we expect that something similar can be done also in the function field setting by working with Drinfeld analogues of Shimura curves (the simplest instance of which are Drinfeld's modular curves $X_0(\n)$). Unfortunately, although analogues of this kind have already been introduced (the foundations of the subject having been laid by Laumon, Rapoport and Stuhler in \cite{lrs}), much of their arithmetic is still to be explored, and the relevant Gross-Zagier formulas, which would give an important analytic motivation to the study, remain to be proved (see \cite{u1} for remarks on these topics). We plan to return to this circle of ideas in a future work.

\end{document}